\documentclass[UTF-8,final]{siamltex}
\usepackage{amsmath}
\usepackage{epsfig}
\usepackage{graphicx}
\usepackage{amssymb}
\usepackage{CJK}
\usepackage{color}
\usepackage{subfigure} 
\usepackage{float}
\usepackage{hyperref}
\usepackage[noadjust]{cite}

\numberwithin{equation}{section}
\newtheorem{algorithm}{Algorithm}[section]
\newtheorem{remark}{Remark}[section]

\renewcommand\abstractname{Abstract}

\renewcommand\figurename{Figure}
\renewcommand\tablename{Table}

\renewcommand\refname{References}

\allowdisplaybreaks[4]

\normalsize

\def\Ome{{\Omega}}

\def\nab{{\nabla}}

\def\p{{\partial}}

\def\norm#1#2{\Vert\,#1\,\Vert_{#2}}

\def\div{{\mbox{\rm div\,}}}

\def\p{{\partial}}

\def\nab{\nabla}
\def\Ome{\Omega}

\newcommand{\abs}[1]{\left\vert#1\right\vert}

\def\bbf{\mathbf{f}}
\def\bu{\mathbf{u}}

\def\bv{\mathbf{v}}

\def\bg{\mathbf{g}}
\def\bn{\mathbf{n}}
\def\bH{\mathbf{H}}

\def\R{\mathbb{R}}

\begin{document}
	
	
	\title{ Optimal Error Estimates of a new Multiphysic Finite Element Method for Nonlinear Poroelasticity model with Hencky-Mises Stress Tensor\footnote{Last update: \today}}
	
\author{
	Yanan He\thanks{School of Mathematics and Statistics, Henan University, Kaifeng 475004, PR China ({\tt Email:hyn639@163.com}).}
	\and
	Zhihao Ge\thanks{Corresponding author. School of Mathematics and Statistics, Henan University, Kaifeng 475004, PR China ({\tt Email:zhihaoge@henu.edu.cn}).
		The work of this author was supported by the National Natural Science Foundation of China(No. 12371393) and Natural Science Foundation of Henan(No. 242300421047).}
}
	
	\maketitle
	
	
	\setcounter{page}{1}
	
	
	
	\begin{abstract}
		In this paper, we develop a new multiphysics finite element method for a nonlinear poroelastic model with Hencky-Mises stress tensor. By introducing some new notations, we reformulate the original model into a fluid-fluid coupling problem, which is viewed as a generalized nonlinear Stokes sub-problem combined with a reaction-diffusion sub-problem. Then, we establish the existence and uniqueness of the weak solution for the reformulated problem, and propose a stable, fully discrete multiphysics finite element method which employs Lagrangian finite element pairs for spatial discretization and a backward Euler scheme for temporal discretization.
		By ensuring the parameters $\kappa_1$ and $\kappa_3$ remain bounded and non-zero even as $\lambda$ tends to infinity, the proposed method maintains stability for a wide range of Lagrangian element pairs.
		Based on the continuity and monotonicity of the nonlinear term $\mathcal{N}(\varepsilon(\mathbf{u}_h^{n}))$,  we give the stability analysis and derive optimal error estimates for the displacement vector $\mathbf{u}$ and the pressure $p$ in both $L^2$-norm and $H^1$-norm. In particular, the $L^2$-norm error estimate for the displacement $\mathbf{u}$, which was not present in previous literature, is established here through an auxiliary problem and a Poincar$\acute{e}$ inequality.
		Also, we present numerical tests to verify the theoretical analysis, and the results confirm the optimal convergence rates. Finally, we draw conclusions to summarize the work.
	\end{abstract}
	
	\begin{keywords}
	 Poroelasticity model; Multiphysics finite element method; Backward Euler method; Hencky-Mises stress tensor
	\end{keywords}
	
	\begin{AMS}
		65M12, 
		65M15, 
		65N30. 
	\end{AMS}
	
	\pagestyle{myheadings}
	\thispagestyle{plain}
	\markboth{YANAN HE, ZHIHAO GE}{ OPTIMAL ERROR ESTIMATES OF A NEW MFEM FOR NONLINEAR POROELASTICITY MODEL}
	

\section{Introduction}\label{sec-1}

Poroelasticity theory describes the coupled mechanical and fluid flow processes in saturated porous media, capturing the time-dependent interaction between solid skeleton deformation and pore fluid flow. This theoretical framework enables the prediction of stress fields, displacement fields, and pore pressure fields in poroelastic media under various loading conditions. Moreover, poroelasticity theory finds extensive applications across multiple disciplines, including geosciences such as earthquakes, landslides, groundwater remediation and petroleum engineering \cite{B1,B3,B4}, environmental engineering such as $\mathrm{CO_{2}}$ sequestration \cite{B7,B8}, and bioengineering for simulating biological soft tissues including arterial walls, myocardium, articular cartilage, and brain swelling \cite{B9,B11,B13}.

The classical poroelastic model, based on linear constitutive relations, has been widely used in the analysis of porous media. However, typical engineering materials such as metals, soils, and concrete exhibit significant nonlinear stress-strain behavior even under small deformations. To more realistically capture such nonlinear responses within a linear strain framework, we introduce the Hencky-Mises nonlinear constitutive relation \cite{B18,Wilber2005,B16} into the poroelastic theory.

This extension retains the mathematical convenience of the classical linear theory while offering other key advantages: firstly, it provides a flexible modeling approach for nonlinear shear behavior through a strain energy function, enabling accurate representation of complex material responses; secondly, its theoretical structure remains consistent with classical elasticity, ensuring theoretical coherence; thirdly, its mathematical form guarantees desirable properties such as monotonicity and coercivity \cite{B18,B16}, laying a theoretical foundation for solution well-posedness and numerical stability.

Building on this, the poroelastic model incorporating  the Hencky-Mises stress tensor is applicable across multiple fields where nonlinear material behavior is crucial.
In metal structures, the core approach integrates small-deformation kinematics with nonlinear constitutive relations, such as those incorporating yielding and hardening \cite{Wriggers2008}.
In soil mechanics, even when overall deformations remain within the small-strain regime, stability issues are strongly influenced by the material’s inherent nonlinear shear and frictional responses, often requiring the use of sophisticated nonlinear material laws for accurate description \cite{Santagata2007, Likitlersuang2013}.
Similarly, in concrete structures, behaviors such as cracking and softening are primarily governed by nonlinear stress-strain relationships at the sectional level and can be effectively described using nonlinear constitutive relations based on the assumption of linear strains \cite{Wriggers2008}.
In biomedical engineering, even under physiological loading that induces only small strains, soft tissues such as articular cartilage and vascular walls still exhibit pronounced nonlinear elasticity \cite{Fung1993, Gasser2006, Safar2018}.
This enhanced modeling capability enables more realistic predictions to be made in applications such as tissue engineering and geological resource extraction.
In this paper, the quasi-static nonlinear poroelastic model is given as follows:
\begin{align} 
	-\div \sigma(\bu) + \alpha \nab p = \bbf
	&\qquad \mbox{in } \Ome_T:=\Ome\times (0,T)\subset \mathbf{\R}^d\times (0,T),\label{1.1}\\
	(c_0p+\alpha \div \bu)_t + \div \bv_f =\phi &\qquad \mbox{in } \Ome_T,	\label{1.2}
\end{align}
where
\begin{align} 	
	\sigma(\mathbf{u})&=\tilde{\lambda}(\mathrm{dev}(\varepsilon(\mathbf{u})))
	\mathrm{tr}(\varepsilon(\mathbf{u}))\mathbf{I}
	+\tilde{\mu}(\mathrm{dev}(\varepsilon(\mathbf{u})))\varepsilon(\mathbf{u}),\label{11-22-1}\\
	\bv_f &= -\frac{K}{\mu_f} \bigl(\nab p -\rho_f \bg \bigr).\label{1.3}
\end{align}

Here, the strain tensor is defined as $\varepsilon(\mathbf{u})=\frac12(\nabla\mathbf{u}+\nabla^T\mathbf{u})$, and the deviatoric operator $\mathrm{dev}:R^{d\times d}\rightarrow R^{+}$ is given by $\mathrm{dev}(\mathbf{\tau}):=\mathrm{tr}(\mathbf{\tau}^2)-\frac{1}{2}\mathrm{tr}^2(\mathbf{\tau})$. 
The stress tensor in \eqref{11-22-1} is governed by a Hencky-Mises constitutive law, which derives from a stored energy density function \cite{B18} $\Psi: \mathbb{R}^{2 \times 2} \to \mathbb{R}$ such that
	\begin{align}\label{2025-6-10-1}
		\sigma_{ij}(\mathbf{u})=\frac{\partial}{\partial \varepsilon_{ij}}\Psi (\varepsilon(\mathbf{u}))\quad\forall i,j\in\{1,2,\cdots,d\}.
	\end{align}
	And we define the stored energy density function as
	\begin{align}\label{11-22-2}
		\Psi(\mathbf{\tau})=\frac{\kappa}{2}\mathrm{tr}^2(\mathrm{\mathbf{\tau}})+\mu
		\Phi(\mathrm{dev}(\mathbf{\tau}))\quad\forall \mathbf{\tau}\in R^{d\times d}.
	\end{align}
	When taking $\kappa=\lambda+\mu$ and $\Phi(\rho)=\rho$ in \eqref{11-22-2}, we can get the linear Cauchy stress tensor 
	\begin{align*}
		\Psi(\mathbf{\tau})=\frac{\lambda}{2}\mathrm{tr}^2(\mathbf{\tau})+\mu\mathrm{tr}(\mathbf{\tau}^2).
	\end{align*}
	Here $\lambda$ and $\mu$ are positive Lam$\acute{e}$  constants and computed from the Young's modulus $E$ and the Poisson ratio $\nu$ by the following formulas
	\begin{align}\label{3.1}
		\lambda=\frac{E\nu}{(1+\nu)(1-2\nu)},\quad\mu=\frac{E}{2(1+\nu)}.
	\end{align}	
	The function $\kappa$ is supposed to be continuous and satisfies 
	\begin{align}\label{11-25-1}
		&0<k_0\leq \kappa(x)\leq k_1 \quad \forall x\in \Omega,
	\end{align}
	where $k_0$ and $k_1$ are positive constants. 
	 The function $\Phi:[0,+\infty]\rightarrow R$ is a function of class $C^{2}$. And for any $\rho\in R^{+}$, there exist positive constants  $C_{1}$, $C_{2}$ and $C_{3}$, such that
	\begin{align}\label{11-25-2}
		 C_{1}\leq \mu\Phi^{'}(\rho)<\kappa,\quad |\rho\Phi^{''}(\rho)|\leq C_{2} \quad and\quad
		\Phi^{'}(\rho)+2\rho\Phi^{''}(\rho)\geq C_{3}.
	\end{align}	
	Using \eqref{2025-6-10-1}-\eqref{11-22-2} and the
	nonlinear Lam$\acute{e}$ functions $\tilde{\lambda},\tilde{\mu}:R^{+}\rightarrow R$ which defined by
	\begin{align*}
		\tilde{\mu}(\rho) :=2\mu\Phi^{'}(\rho),~~\tilde{\lambda}(\rho) :=
		\kappa-\frac{1}{2}\tilde{\mu}(\rho)\quad\forall \rho\in R^{+},
	\end{align*}
	we can get stress tensor \eqref{11-22-1}. 	

To close the system  \eqref{1.1}-\eqref{1.2}, we set the following boundary and initial conditions:
\begin{align} 
	\widehat{\sigma}(\bu,p)\bn=\sigma(\bu)\bn-\alpha p \bn = \bbf_1
	&\qquad \mbox{on } \p\Ome_T:=\p\Ome\times (0,T),\label{1.4}\\
	\bv_f\cdot\bn= -\frac{K}{\mu_f} \bigl(\nab p -\rho_f \bg \bigr)\cdot \bn
	=\phi_1 &\qquad \mbox{on } \p\Ome_T, \label{1.5} \\
	\bu=\bu_0,\qquad p=p_0 &\qquad \mbox{in } \Ome\times\{t=0\}. \label{1.6}
\end{align}

We remark that \eqref{1.1} is the momentum balance equations for the displacement of the medium and \eqref{1.2} is the mass balance equation for the pressure distribution.
Here $\Omega\subset \mathbb{R} ^d\left ( d= 2, 3\right ) $ denotes a bounded convex domain with the boundary $\partial\Omega$. $\mathbf{u}$ denotes the displacement vector of solid and $p$ denotes the pressure of fluid. $\mathbf{f}$ is body force and $\phi$ is a source term. $\sigma(\mathbf{u})$ is called the (effective) stress tensor and $\varepsilon(\mathbf{u})$ is known as the deformed Green strain tensor. $\mathbf{v}_f$ is called Darcy's law. 
The permeability tensor $K=K(x)$ is assumed to be symmetric and uniformly positive definite in the sense that there exists positive constants $K_1$ and $K_2$ such that $K_1|\zeta|^2\leq K(x)\zeta\cdot\zeta\leq K_2|\zeta|^2$ for a.e. $x\in\Omega$ and $\zeta\in\mathbb{R}^d$. 
The coefficient $\alpha>0$ is the Biot-Willis constant used to account for the pressure-deformation coupling and to measure the fluid volume forced out of the solid skeleton by dilation. The constrained specific storage coefficient $c_0\geq0$ is the combined porosity of the medium and compressibility of the fluid and solid.
$\mu_f$ denote the fluid viscosity and  $\mathbf{g}$ denotes the acceleration of gravity. We assume $\rho_{f} \not\equiv 0$. The source terms $\mathbf{f}_1$ and $\phi_1$ are given functions. 


For the classical poroelastic model with linear constitutive relations, 
there are many research results of PDE analysis and numerical analysis. For example, Phillips and Wheeler proposed and analyzed linear poroelastic models for the continuous-in-time case and discrete-in-time case \cite{B14,B15}, respectively. And they found there exist locking phenomenon in computation if $T$ is small in numerical test \cite{Phillips2009}.
Yi developed a nonconforming mixed finite element method to solve Biot's consolidation model \cite{B42}. 
Sun and Rui proposed a coupling of a weak Galerkin method for the displacement of the solid phase with a standard mixed finite method for the pressure and velocity of the fluid phase in poroelastic equation  \cite{B43}.
Feng, Ge and Li
proposed a multiphysics finite element method for a quasi-static poroelastic model by a  multiphysics approach \cite{B27}, and so on.

For elastic problems incorporating the nonlinear Hencky-Mises stress tensor, 
Barrientos, Gatica and Stephan proposed a mixed finite element method using the double saddle point technique to solve nonlinear poroelastic problems, and provided the corresponding weak formulation \cite{B18}. 
Botti, Pietro and Sochala proposed and analyzed a new mixed high-order discretized (linear and nonlinear) elastic models, which are commonly used in solid mechanics in small deformation region \cite{B16}. 
Veiga, Lovadina and Mora proposed a virtual element method for nonlinear poroelastic problems, which mainly focuses on the small deformation region \cite{B17}. Ge and Lou propose a fully discrete multiphysics finite element method for solving a nonlinear poroelasticity model under finite strain \cite{2023gelou}, within whose constitutive framework the Nonlinear Hencky-Mises stress tensor can be regarded as a special case.
While existing research on the poroelastic model with the Hencky-Mises stress tensor remains limited, this work therefore designs a multiphysics finite element method to investigate it.

The innovations of this paper are as follows:
(1) Compared to reference \cite{2023gelou}, we introduce some new notations
$\mathcal{N}(\varepsilon(\mathbf{u}))=\sigma(\mathbf{u})-\frac{1}{\lambda} \mathrm{div}\mathbf{u}\mathbf{I}$, $\xi:=\alpha p -\frac{1}{\lambda} \mathrm{div}\mathbf{u}$
which reformulate the original model into a fluid-fluid coupling problem - this can be viewed as a generalized nonlinear Stokes sub-problem combining with a reaction-diffusion sub-problem. The reformulation model inherently prevents the locking phenomenon.
Moreover, it has been observed that as the value of $\lambda$ increases, the nonlinearity of $\mathcal{N}(\varepsilon(\mathbf{u}))$ becomes stronger.
(2) 
We design a stable, fully discrete multiphysics finite element method and establish optimal error estimates for both the displacement $\mathbf{u}$ and the pressure $p$. By ensuring the parameters $\kappa_1$ and $\kappa_3$ remain bounded and non-zero even as $\lambda \to \infty$, the proposed method maintains stability for a wide range of Lagrangian element pairs (e.g., $P_2$-$P_1$, $P_2$-$P_2$, $P_1$-$P_1$).
(3) 
We derive an $L^2$-norm error estimate for the displacement $\mathbf{u}$, which was unavailable in the existing literature. This estimate is obtained by means of an auxiliary problem and a Poincar$\acute{e}$ inequality.

The remainder of this paper is organized as follows. In Section \ref{2025-9-9-1}, we reformulate the original model based on some new nations and give the existence and uniqueness of weak solution based on the continuity, monotonicity and compulsion of the nonlinear term\ $\mathcal{N} (\varepsilon(\mathbf{u}))$. 
In Section \ref{2025-9-9-2}, we propose a fully discrete multiphysics finite element method to solve the reformulated model with $\mathbf{P}_2-P_1-P_1$ Lagrange elements. We provide a stability analysis and obtain the optimal $L^2$ norm and $H^1$ norm error estimates. Specifically, for the $L^2$ norm error estimate of the displacement $u$, we introduce an auxiliary problem and inequalities.
In Section \ref{2025-9-9-3}, we show some numerical tests to verify the theoretical results and employed the Picard iteration method for the nonlinear terms.
Finally, we draw conclusions to summarize the main results of the paper.

\section{Multiphysics reformulation and PDE analysis}\label{2025-9-9-1}
\subsection{Multiphysics reformulation}
This paper employs standard function space notations, whose definitions can be found in \cite{B21,B28}. We respectively use $( \cdot , \cdot ) $ and $\langle \cdot , \cdot \rangle $ to represent the inner products of $L^2( \Omega) $ and $L^2( \partial\Omega)$. 
Introduce new notations
\begin{align}\label{11-26-2}
	\mathcal{N}(\varepsilon(\mathbf{u}))=\sigma(\mathbf{u})-\frac{1}{\lambda} \mathrm{div}\mathbf{u}\mathbf{I},
	\quad q:= \mathrm{div}\mathbf{u},\quad \eta:=c_{0}p+\alpha q,\quad 
	\xi:=\alpha p -\frac{1}{\lambda} q.
\end{align}
It is easy to check that
\begin{align}\label{3.4}
	p=\kappa_{1} \xi + \kappa_{2} \eta, \qquad q=\kappa_{1} \eta-\kappa_{3} \xi,
\end{align}
where
\begin{align}\label{3.5}
	\kappa_{1}= \frac{\lambda\alpha}{\lambda\alpha^{2}+c_{0}},
	\quad \kappa_{2}=\frac{1}{\lambda\alpha^{2}+c_{0}}, \quad 
	\kappa_{3}=\frac{\lambda c_{0}}{\lambda\alpha^{2}+c_{0}}.
\end{align}
Using \eqref{11-26-2}, the problem \eqref{1.1}-\eqref{1.2} can be rewritten as
\begin{align}
	-\operatorname{div}\mathcal{N}(\varepsilon(\mathbf{u}))+\nabla\xi=\mathbf{f} &\qquad \mathrm{in~} \Omega_T, \label{3.6}\\
	\kappa_3\xi+\mathrm{div}\mathbf{u}=\kappa_1\eta &\qquad \mathrm{in~} \Omega_T, \label{3.7}\\
	\eta_t- \frac{1}{\mu_f}\mathrm{div}[K(\nabla(\kappa_1\xi+\kappa_2\eta)-\rho_f \mathbf{g})]=\phi &\qquad \mathrm{in~} \Omega_T. \label{3.8}
\end{align}
The boundary and initial conditions \eqref{1.4}-\eqref{1.6} can be rewritten as
\begin{align}
	\sigma(\mathbf{u})\mathbf{n}-\alpha (\kappa_1\xi+\kappa_2\eta)\mathbf{n}=\mathbf{f}_1& \qquad \mathrm{on~} \partial\Omega_T,\label{3.9}\\
	-\frac{K}{\mu_f}(\nabla(\kappa_1\xi+\kappa_2\eta)-\rho_f \mathbf{g})\cdot \mathbf{n}=\phi_1& \qquad \mathrm{on~} \partial\Omega_T,\label{3.10}\\
	\mathbf{u}=\mathbf{u_0},~~~p=p_0& \qquad \mathrm{in~}  \Omega\times\{t=0\}.\label{3.11}
\end{align}

In order to ensure the existence and uniqueness of solution of the problem \eqref{3.6}-\eqref{3.8} with pure Neumann boundary conditions, we introduce the infinitesimal rigid motions space
\begin{align}
	\mathbf{RM}:=\{\mathbf{r}\mid\mathbf{r}=\mathbf{a}+\mathbf{b}\times \mathbf{x},~\mathbf{a},\mathbf{b},\mathbf{x}\in\mathbb{R}^d\}.
\end{align}
From \cite{B22,B23}, we know that $\mathbf{RM}$ is the kernel of the strain operator $\varepsilon$, that is, $\mathbf{r}\in \mathbf{RM}$ if and only if $\varepsilon(\mathbf{r})=0$. Hence, we have
\begin{align}
	\varepsilon(\mathbf{r})=0,\quad\div\mathbf{r}=0\qquad\forall\mathbf{r}\in\mathbf{RM}.
\end{align}
Moreover, we define $\mathbf{H}_\bot^1( \Omega)$ as a  subspace of  $\mathbf{H}^1(\Omega)$ and it is orthogonal to $\mathbf{RM}$, that is,
\begin{align}
	\mathbf{H}_\perp^1(\Omega)&:=\{\mathbf{v}\in\mathbf{H}^1(\Omega);\:(\mathbf{v},\mathbf{r})=0\:~~\forall\mathbf{r}\in\mathbf{RM}\}.
\end{align}

Throughout the paper, we assume $C$ is used to denote a generic positive (pure) constant which may be different in different places.
Several fundamental inequalities are presented below.

From \cite{B24}, we know the following Poincar$\acute{e}$ inequality: 
Let $U\in \mathbb{R}^{n}$ be a convex domain with diameter $\gamma$, then
\begin{align}\label{12-12-1}
	\|\phi\|_{L^2(U)}\leq \frac{\gamma}{\pi}\|
	\nabla\phi\|_{L^2(U)}
\end{align}
for all $\phi(x)\in H^1(U)$ satisfying $\int_{U}\phi(x)\mathrm{~dx}=0$.

From \cite{B55}, we know the Korn's inequality holds: 
There exists a positive constant C, such that
\begin{align}\label{11-25-4}
	\sum_{i,j=1}^{2}\|\varepsilon_{ij}(\mathbf{u})\|^2_{L^2(\Omega)}\geq C\|\mathbf{u}\|^2_{H^1(\Omega)}&\quad \forall
	\mathbf{u}\in \mathbf{H}^1(\Omega).
\end{align}

Next, we give the variational format of problem \eqref{3.6}-\eqref{3.8}.
\begin{definition}\label{weak2}
	Let $\mathbf{u}_0\in \mathbf{H}^1(\Omega), \mathbf{f} \in \mathbf{L}^2(\Omega), 
	\mathbf{f}_1 \in \mathbf{L}^2(\partial\Omega),  p_0\in L^2(\Omega), \phi\in L^2(\Omega)$, 
	and $\phi_1\in L^2(\partial\Omega)$.  Assume 
	$(\mathbf{f},\mathbf{v})+\langle \mathbf{f}_1, \mathbf{v} \rangle =0$ for any $\mathbf{v}\in \mathbf{RM}$.
	Given $T > 0$, a $3$-tuple $(\mathbf{u},\xi,\eta)$ with
	\begin{align*}
		&\mathbf{u}\in L^\infty\bigl(0,T; \mathbf{H}_\perp^1(\Omega)), 
		\quad
		\xi\in L^\infty \bigl(0,T; L^2(\Omega)\bigr)
		\cap
		L^2 \bigl(0,T; H^1(\Omega)\bigr)
		, \\
		&\eta\in L^\infty\bigl(0,T; L^2(\Omega)\bigr)
		\cap H^1\bigl(0,T; H^{1}(\Omega)'\bigr).
	\end{align*}
	is called a weak solution to the problem \eqref{3.6}-\eqref{3.8}, if there hold for almost every $t \in (0,T]$
	\begin{align}
		\bigl(\mathcal{N}(\varepsilon(\mathbf{u})), \varepsilon(\mathbf{v}) \bigr)-\bigl( \xi, \operatorname{div} \mathbf{v} \bigr)
		= (\mathbf{f}, \mathbf{v})+\langle \mathbf{f}_1,\mathbf{v}\rangle
		&\qquad\forall \mathbf{v}\in \mathbf{H}^1(\Omega), \label{2.24}\\
		\kappa_3 \bigl( \xi, \varphi \bigr) +\bigl(\operatorname{div}\mathbf{u}, \varphi \bigr)
		= \kappa_1\bigl(\eta, \varphi \bigr) &\qquad\forall \varphi \in L^2(\Omega), \label{2.25}  \\
		\bigl(\eta_t, \psi \bigr)
		+\frac{1}{\mu_f} \bigl(K(\nabla (\kappa_1\xi +\kappa_2\eta) -\rho_f\mathbf{g}), \nabla \psi \bigr)=(\phi,\psi)
		+\langle \phi_1,\psi \rangle
		&\qquad\forall \psi \in H^1(\Omega), \label{2.26} 
	\end{align}
\end{definition}
\subsection{PDE analysis}
In this subsection, we will give the existence and uniqueness analysis of the weak solution of the problem \eqref{2.24}-\eqref{2.26}.
In the beginning, we give some preliminary results for Hencky-Mises stress tensor \eqref{11-22-1}. 
Let $\sigma(\mathbf{u})=a_{ij}(x,\varepsilon(\mathbf{u}))$, where $a_{ij}(x,\varepsilon(\mathbf{u})): \Omega\times\mathbb{R}^{2\times2}\rightarrow \mathbb{R}$ is the set of nonlinear mappings defined by
\begin{align}\label{}
	a_{ij}(x,\varepsilon(\mathbf{u}))&=\tilde{\lambda}(\mathrm{dev}(\varepsilon(\mathbf{u})))
	\mathrm{tr}(\varepsilon(\mathbf{u}))\mathbf{I}
	+\tilde{\mu}(\mathrm{dev}(\varepsilon(\mathbf{u})))\varepsilon(\mathbf{u})\nonumber\\
	&=[\kappa(x)-\frac{1}{2}\tilde{\mu}(\mathrm{dev}(\varepsilon(\mathbf{u})))]\delta_{ij}(\sum_{k=1}^{2}\varepsilon_{kk}(\mathbf{u}))+\tilde{\mu}(\mathrm{dev}(\varepsilon(\mathbf{u})))\varepsilon_{ij}(\mathbf{u})
\end{align}
for all $(x,\varepsilon_{ij}(\mathbf{u}))\in \Omega\times \mathbb{R}^{2\times2}$, with 
\begin{align*}
	\mathrm{dev}(\varepsilon(\mathbf{u}))=\sum_{i,j=1}^{2}\{\varepsilon_{ij}(\mathbf{u})-\frac{1}{2}\delta_{ij}(\sum_{k=1}^{2}\varepsilon_{kk}(\mathbf{u})) \}^2.
\end{align*}

Then, we present the following Lemma \ref{lemma2.3} -  \ref{11-28-2}, which are adopted from reference  \cite{B40} and describe some very important properties of the nonlinear coefficient $a_{ij}$.
\begin{lemma}\label{lemma2.3}
	The function $a_{ij}(\cdot,\varepsilon(\mathbf{u}))$ is measurable in $\Omega$ for all $\varepsilon(\mathbf{u})\in \mathbb{R}^{2\times2}$, and $a_{ij}(x,\cdot)$ is continuous in $\mathbb{R}^{2\times2}$ for almost all $x\in \Omega$. Also there exists $C>0$ such that
	\begin{align*} 
		|a_{ij}(x,\varepsilon(\mathbf{u}))|\leq C|\varepsilon(\mathbf{u})|,
	\end{align*}	
	for all $\varepsilon(\mathbf{u})\in \mathbb{R}^{2\times2}$ and for almost all $x\in \Omega$.
\end{lemma}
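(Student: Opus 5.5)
The plan is to verify the three claims in Lemma \ref{lemma2.3} directly from the explicit formula for $a_{ij}$ and the hypotheses \eqref{11-25-1}--\eqref{11-25-2}. Write $\rho=\mathrm{dev}(\varepsilon(\mathbf{u}))$, and use the notation $\varepsilon=\varepsilon(\mathbf{u})\in\mathbb{R}^{2\times2}$ for the pointwise argument.

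\emph{Measurability and continuity.} For fixed $\varepsilon$, the map $x\mapsto a_{ij}(x,\varepsilon)$ depends on $x$ only through $\kappa(x)$. Since $\kappa$ is continuous by assumption, this map is continuous, hence measurable.

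For fixed $x$, the map $\varepsilon\mapsto\rho$ is a quadratic polynomial in the entries of $\varepsilon$, so it is continuous. Because $\Phi\in C^2$, the function $\tilde\mu(\rho)=2\mu\Phi'(\rho)$ is continuous, and so is $\tilde\lambda(\rho)=\kappa(x)-\frac12\tilde\mu(\rho)$. Products and sums of continuous functions with $\mathrm{tr}(\varepsilon)$ and $\varepsilon_{ij}$ are continuous. This gives the Carath\'eodory properties.

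\emph{Growth bound.} By \eqref{11-25-2} we have $C_1\le\mu\Phi'(\rho)<\kappa(x)\le k_1$, so $0<\tilde\mu(\rho)\le 2k_1$ uniformly in $\rho\ge0$. Combining this with \eqref{11-25-1} gives
\begin{align*}
|\tilde\lambda(\rho)|\le \kappa(x)+\tfrac12\tilde\mu(\rho)\le 2k_1 .
\end{align*}
For the remaining factors, $|\delta_{ij}\sum_k\varepsilon_{kk}|\le \sqrt2\,|\varepsilon|$ by Cauchy--Schwarz, and $|\varepsilon_{ij}|\le|\varepsilon|$. Therefore
\begin{align*}
|a_{ij}(x,\varepsilon)|\le 2k_1\sqrt2\,|\varepsilon|+2k_1|\varepsilon|=C|\varepsilon|,
\end{align*}
with $C=2k_1(1+\sqrt2)$, independent of $x$ and $\varepsilon$.

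\emph{Main obstacle.} The only delicate point is making sure $\tilde\mu$ and $\tilde\lambda$ are bounded uniformly in $\rho$ even though $\rho$ may be large. This bound has to come from the two-sided hypothesis $C_1\le\mu\Phi'(\rho)<\kappa\le k_1$. The condition on $\rho\Phi''$ is not needed here; it matters only for the later monotonicity and Lipschitz lemmas.
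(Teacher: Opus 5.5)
Your argument is correct. The paper gives no proof of this lemma; it adopts it, together with Lemmas \ref{11-28-1} and \ref{11-28-2}, from \cite{B40}. So the comparison is between a citation and your direct verification.

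Your version is self-contained. It makes explicit which hypotheses are actually used:
\begin{itemize}
\item continuity of $\kappa$ and the bounds \eqref{11-25-1};
\item the two-sided bound $C_1\le\mu\Phi'(\rho)<\kappa$ from \eqref{11-25-2}.
\end{itemize}
It also gives an explicit constant. The citation keeps the paper short, but leaves the reader to check that the assumptions here match those of \cite{B40}.

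One point you should state explicitly rather than leave implicit. The hypotheses on $\Phi$ are only given for $\rho\ge 0$, so you need $\rho=\mathrm{dev}(\varepsilon)\ge 0$ for every $\varepsilon\in\mathbb{R}^{2\times2}$. Without this, $\Phi'(\rho)$ is not even defined and \eqref{11-25-2} cannot be invoked. It does hold, because in this section $\mathrm{dev}$ is defined as the sum of squares $\sum_{i,j}(\varepsilon_{ij}-\frac12\delta_{ij}\sum_k\varepsilon_{kk})^2$. Note that the introduction's formula $\mathrm{tr}(\tau^2)-\frac12\mathrm{tr}^2(\tau)$ agrees with it only for symmetric $\tau$. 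It can be negative for non-symmetric arguments: any nonzero antisymmetric $\tau$ gives $\mathrm{tr}(\tau^2)<0$ and $\mathrm{tr}(\tau)=0$.

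A minor cosmetic point: your bound on $\tilde\lambda$ can be sharpened to $0<\tilde\lambda(\rho)=\kappa(x)-\mu\Phi'(\rho)\le k_1-C_1$.
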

\begin{lemma}\label{11-28-1}
	The nonlinear functions $a_{ij}(x,\cdot)$ have first order partial derivatives in $\mathbb{R}^{2\times2}$ for almost all $x\in \Omega$. Also, there exists $C>0$ such that
	\begin{align*}
		\sum_{i,k=1}^{2}\sum_{j,l=1}^{2}\frac{\partial}{\partial\varepsilon_{ij}}a_{ij}(x,\varepsilon(\mathbf{u}))\beta_{kl}\beta_{ij}\geq C \sum_{i,j=1}^{2}\beta_{ij}^2,
	\end{align*}
	for all $\beta:=\beta_{ij}\in \mathbb{R}^{2\times2}$, for all $\varepsilon(\mathbf{u}):=\varepsilon_{ij}(\mathbf{u})\in \mathbb{R}^{2\times2}$ and for almost all $x\in \Omega$.
\end{lemma}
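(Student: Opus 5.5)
We prove Lemma \ref{11-28-1}, understood as uniform positive definiteness of the Jacobian $\partial a_{ij}/\partial\varepsilon_{kl}$. The plan is to write the stress as the gradient of the stored energy, $a_{ij}=\partial\Psi/\partial\varepsilon_{ij}$ by \eqref{2025-6-10-1}. Then $\frac{\partial a_{ij}}{\partial\varepsilon_{kl}}\beta_{kl}\beta_{ij}$ is the second directional derivative $\frac{d^2}{ds^2}\Psi(\varepsilon+s\beta)|_{s=0}$. Existence of first-order partial derivatives is immediate, since $\Phi\in C^2$ and $\mathrm{dev}$ is a quadratic polynomial in the entries $\varepsilon_{ij}$. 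So the whole task reduces to bounding this second variation from below by $C|\beta|^2$.

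Decompose $\beta=\beta^D+\frac12\mathrm{tr}(\beta)\mathbf{I}$ and $\varepsilon=\varepsilon^D+\frac12\mathrm{tr}(\varepsilon)\mathbf{I}$ into deviatoric (trace-free) and spherical parts. Since $\mathrm{dev}(\tau)=|\tau^D|^2$, we have
\begin{align*}
\Psi(\varepsilon+s\beta)=\frac{\kappa}{2}\bigl(\mathrm{tr}\,\varepsilon+s\,\mathrm{tr}\,\beta\bigr)^2+\mu\Phi\bigl(|\varepsilon^D+s\beta^D|^2\bigr).
\end{align*}
Differentiating twice at $s=0$, with $\rho=|\varepsilon^D|^2$, gives
\begin{align*}
\kappa\,\mathrm{tr}^2(\beta)+2\mu\Phi'(\rho)|\beta^D|^2+4\mu\Phi''(\rho)\bigl(\varepsilon^D:\beta^D\bigr)^2 .
\end{align*}

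Handling the possibly negative $\Phi''$ term is the main step. Split into two cases.

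If $\Phi''(\rho)\ge 0$, drop the last term. The second derivative is then at least $\kappa\,\mathrm{tr}^2\beta+2C_1|\beta^D|^2$.

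If $\Phi''(\rho)<0$, use Cauchy--Schwarz, $(\varepsilon^D:\beta^D)^2\le\rho|\beta^D|^2$. This gives the deviatoric coefficient bound
\begin{align*}
2\mu\Phi'(\rho)+4\mu\rho\Phi''(\rho)=2\mu\bigl(\Phi'(\rho)+2\rho\Phi''(\rho)\bigr)\ge 2\mu C_3 .
\end{align*}

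In both cases the quadratic form is bounded below by $k_0\,\mathrm{tr}^2\beta+2\min(C_1,\mu C_3)|\beta^D|^2$, using \eqref{11-25-1} and \eqref{11-25-2}.

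To finish, note that $|\beta|^2=|\beta^D|^2+\frac12\mathrm{tr}^2\beta$ by orthogonality of the decomposition. Hence the form is $\ge C|\beta|^2$ with $C=\min\{2k_0,\,2C_1,\,2\mu C_3\}>0$, uniformly in $x$ and $\varepsilon$. The only genuinely delicate point is the sign of $\Phi''$. Hypothesis \eqref{11-25-2} is designed exactly for it, through the condition $\Phi'+2\rho\Phi''\ge C_3$. The bound $|\rho\Phi''|\le C_2$ is needed only for the companion upper (Lipschitz) bound and is not used here.
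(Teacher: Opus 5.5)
Your proof is correct. The paper gives no argument for Lemma \ref{11-28-1}. It imports it, together with Lemmas \ref{lemma2.3} and \ref{11-28-2}, from \cite{B40}, and never checks that its own assumptions \eqref{11-25-1}--\eqref{11-25-2} on $\kappa$ and $\Phi$ actually imply it. Your route supplies exactly that missing link.

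You identify the quadratic form with the second variation $\kappa\,\mathrm{tr}^2\beta+2\mu\Phi'(\rho)|\beta^D|^2+4\mu\Phi''(\rho)(\varepsilon^D:\beta^D)^2$ of the stored energy. In that form the spherical and deviatoric parts decouple, and your sign split on $\Phi''$ shows that only three hypotheses enter: $\kappa\ge k_0$, $\mu\Phi'\ge C_1$ and $\Phi'+2\rho\Phi''\ge C_3$. You also get the explicit constant $C=\min\{2k_0,2C_1,2\mu C_3\}$. The remaining assumptions $\mu\Phi'<\kappa$ and $|\rho\Phi''|\le C_2$ are needed only for the upper bounds in Lemmas \ref{lemma2.3} and \ref{11-28-2}. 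The citation buys brevity; your computation buys a self-contained verification for this specific Hencky--Mises law and shows why the condition $\Phi'+2\rho\Phi''\ge C_3$ appears in \eqref{11-25-2}.

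Two minor points of care. First, you are right to read the displayed sum as $\sum_{i,j,k,l}\frac{\partial a_{ij}}{\partial\varepsilon_{kl}}\beta_{kl}\beta_{ij}$. As printed, with $\partial/\partial\varepsilon_{ij}$, the sum factors through $\sum_{k,l}\beta_{kl}$, and the inequality fails for $\beta=\mathrm{diag}(1,-1)$. Second, the lemma quantifies over all of $\mathbb{R}^{2\times2}$, not only symmetric matrices. So obtain $a_{ij}=\partial\Psi/\partial\varepsilon_{ij}$ by differentiating $\Psi(\tau)=\frac{\kappa}{2}\mathrm{tr}^2(\tau)+\mu\Phi(|\tau^D|^2)$ directly; this gives $\kappa\,\mathrm{tr}(\varepsilon)\delta_{ij}+2\mu\Phi'(\rho)\varepsilon^D_{ij}$, which is exactly the paper's $a_{ij}$. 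Do not quote \eqref{2025-6-10-1} for this step: the expression $\mathrm{tr}(\tau^2)-\frac12\mathrm{tr}^2(\tau)$ used there equals $|\tau^D|^2$ only for symmetric $\tau$, and is negative for nonzero skew $\tau$.
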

\begin{lemma}\label{11-28-2}
	The nonlinear functions $a_{ij}(x,\cdot)$ have continuous first order partial derivatives in $\mathbb{R}^{2\times2}$ for almost all $x\in\Omega$. Also the functions $\frac{\partial}{\partial\varepsilon_{kl}}a_{ij}(\cdot,\varepsilon(\mathbf{u}))$ are measurable in $\Omega$  for all $\varepsilon(\mathbf{u})\in \mathbb{R}^{2\times2}$, and $\frac{\partial}{\partial\varepsilon_{kl}}a_{ij}(x,\cdot)$ are continuous in $\mathbb{R}^{2\times2}$ for almost all $x\in \Omega$. Also, there exists $C>0$ such that for each $i,j,k,l\in \{1,2\}$
	\begin{align*}
		|\frac{\partial}{\partial\varepsilon_{kl}}a_{ij}(x,\varepsilon(\mathbf{u}))|\leq C\qquad \forall\varepsilon(\mathbf{u})\in \mathbb{R}^{2\times2}, \quad \forall x\in \Omega.
	\end{align*}
\end{lemma}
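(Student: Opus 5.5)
Lemma \ref{11-28-2} can be verified by direct computation from the explicit form of $a_{ij}$. Write $\rho:=\mathrm{dev}(\varepsilon)$ and $t:=\mathrm{tr}(\varepsilon)=\sum_k\varepsilon_{kk}$. Then
\begin{align*}
a_{ij}(x,\varepsilon)=\bigl[\kappa(x)-\mu\Phi'(\rho)\bigr]\delta_{ij}\,t+2\mu\Phi'(\rho)\varepsilon_{ij}.
\end{align*}
Measurability in $x$ for fixed $\varepsilon$ follows because $\kappa$ is continuous and the remaining factors do not depend on $x$. Since $\Phi\in C^2$ and $\rho$ is a polynomial in $\varepsilon$, each $a_{ij}(x,\cdot)$ is $C^1$. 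The derivatives are
\begin{align*}
\frac{\partial a_{ij}}{\partial\varepsilon_{kl}}
=\bigl[\kappa-\mu\Phi'(\rho)\bigr]\delta_{ij}\delta_{kl}
+2\mu\Phi'(\rho)\delta_{ik}\delta_{jl}
+\mu\Phi''(\rho)\frac{\partial\rho}{\partial\varepsilon_{kl}}\bigl(2\varepsilon_{ij}-\delta_{ij}t\bigr),
\end{align*}
and here $\partial\rho/\partial\varepsilon_{kl}=2(\varepsilon_{kl}-\frac12\delta_{kl}t)=2\,\mathrm{dev}$-part of $\varepsilon$ at entry $kl$, which I denote $2\varepsilon^D_{kl}$. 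These expressions are continuous in $\varepsilon$ and measurable in $x$, so the qualitative claims hold.

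For the uniform bound, the first two terms are controlled by \eqref{11-25-1} and \eqref{11-25-2}. From $C_1\le\mu\Phi'(\rho)<\kappa\le k_1$ we get $|\kappa-\mu\Phi'|\le k_1$ and $|2\mu\Phi'|\le 2k_1$. The third term equals $4\mu\Phi''(\rho)\varepsilon^D_{kl}\varepsilon^D_{ij}$, because $2\varepsilon_{ij}-\delta_{ij}t=2\varepsilon^D_{ij}$. The key observation is that $|\varepsilon^D_{kl}\varepsilon^D_{ij}|\le\sum_{m,n}(\varepsilon^D_{mn})^2=\rho$. This uses the identity $\rho=\sum(\varepsilon_{ij}-\frac12\delta_{ij}t)^2=|\varepsilon^D|^2$, which is valid in two dimensions and agrees with the paper's componentwise definition of $\mathrm{dev}$. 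It follows that the third term is bounded by $4\mu|\rho\Phi''(\rho)|\le4\mu C_2$ by \eqref{11-25-2}.

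Collecting the three terms gives
\begin{align*}
\Bigl|\frac{\partial a_{ij}}{\partial\varepsilon_{kl}}\Bigr|\le 3k_1+4\mu C_2=:C,
\end{align*}
which is uniform in $x$ and $\varepsilon$. The only delicate point is this third term: it involves $\Phi''(\rho)$ multiplied by a quadratic in $\varepsilon$. It could grow without bound unless it is rewritten as $\rho\Phi''(\rho)$ times a factor bounded by $1$, which is exactly what the inequality $|\varepsilon^D_{kl}\varepsilon^D_{ij}|\le\rho$ provides. Finally, at $\varepsilon^D=0$ we have $\rho=0$ and the third term vanishes, so the bound holds there as well.
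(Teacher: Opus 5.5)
Your computation is correct. It differs from the paper only in that the paper gives no argument at all: Lemmas \ref{lemma2.3}--\ref{11-28-2} are simply stated as adopted from \cite{B40}. Your direct differentiation of $a_{ij}(x,\varepsilon)=[\kappa(x)-\mu\Phi'(\rho)]\delta_{ij}\,\mathrm{tr}(\varepsilon)+2\mu\Phi'(\rho)\varepsilon_{ij}$ therefore replaces the citation with a self-contained proof, and it gains some transparency:
\begin{itemize}
\item It shows that only $0<\mu\Phi'(\rho)<\kappa\le k_1$ and $|\rho\Phi''(\rho)|\le C_2$ are used. The condition $\Phi'+2\rho\Phi''\ge C_3$ is not needed here; it is what drives the ellipticity in Lemma \ref{11-28-1}.
\item It isolates the only nontrivial step: rewriting the $\Phi''$-term as $4\mu\Phi''(\rho)\varepsilon^D_{kl}\varepsilon^D_{ij}$ and bounding $|\varepsilon^D_{kl}\varepsilon^D_{ij}|\le\rho$.
\item It makes the constant explicit, $C=3k_1+4\mu C_2$. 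This shows that $C$ is governed by the upper bound $k_1$ on $\kappa$, which is not uniform in $\lambda$ in the linear case $\kappa=\lambda+\mu$.
\end{itemize}

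Two details deserve a line each in a written version:
\begin{itemize}
\item The formula $\partial\rho/\partial\varepsilon_{kl}=2\varepsilon^D_{kl}$ comes from $2\varepsilon^D_{kl}-\delta_{kl}\,\mathrm{tr}(\varepsilon^D)$ together with $\mathrm{tr}(\varepsilon^D)=0$. This is where the $2\times 2$ setting with the factor $\frac12$ is actually used. The identity $\rho=|\varepsilon^D|^2$, by contrast, is just the componentwise definition of $\mathrm{dev}$.
\item At matrices with $\rho(\varepsilon)=0$, you apply the chain rule to $\Phi'\circ\rho$ at the endpoint of the domain of $\Phi$. This is harmless because $\Phi\in C^2$ up to $0$ and $\nabla\rho$ vanishes there. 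Equivalently, extend $\Phi'$ to negative arguments as a $C^1$ function; this does not change $\Phi'\circ\rho$.
\end{itemize}
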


Using Lemma \ref{11-28-1} and Lemma \ref{11-28-2}, we can obtain the coercivity, continuity and monotonicity of the Hencky-Mises stress tensor \eqref{11-22-1}.
\begin{lemma}
	There exists positive constant $C_1,C_2,C_3$ such that
	\begin{align}
		(\sigma(\mathbf{u}), \varepsilon(\mathbf{u}))&\geq C_1 \|\varepsilon(\mathbf{u})\|^2_{L^2(\Omega)},\label{11-25-6}\\
		\|\sigma(\mathbf{u})-\sigma(\mathbf{v})\|_{L^2(\Omega)}&\leq C_2 \|\varepsilon(\mathbf{u})-\varepsilon(\mathbf{v})\|_{L^2(\Omega)},\label{11-25-7}\\
		(\sigma(\mathbf{u})-\sigma(\mathbf{v}), \varepsilon(\mathbf{u})-\varepsilon(\mathbf{v}))&\geq C_3 \|\varepsilon(\mathbf{u})-\varepsilon(\mathbf{v})\|^2_{L^2(\Omega)}.\label{11-25-8}
	\end{align}	
\end{lemma}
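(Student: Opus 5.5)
The plan is to treat $\sigma(\mathbf{u})$ pointwise as the map $a(x,\cdot):\mathbb{R}^{2\times2}\to\mathbb{R}^{2\times2}$, $\tau\mapsto a_{ij}(x,\tau)$. We prove pointwise algebraic inequalities for this map and then integrate over $\Omega$.

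\textbf{Step 1: integral representation.} For fixed $x$ and symmetric tensors $\tau,\theta$, set $\tau_s=\theta+s(\tau-\theta)$ for $s\in[0,1]$. Lemma \ref{11-28-2} says $a(x,\cdot)$ is $C^1$, so the fundamental theorem of calculus gives
\begin{align*}
a_{ij}(x,\tau)-a_{ij}(x,\theta)=\int_0^1\sum_{k,l=1}^{2}\frac{\partial a_{ij}}{\partial\varepsilon_{kl}}(x,\tau_s)\,(\tau-\theta)_{kl}\,ds .
\end{align*}

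\textbf{Step 2: pointwise continuity.} The derivatives are uniformly bounded by Lemma \ref{11-28-2}. Hence $|a(x,\tau)-a(x,\theta)|\le C|\tau-\theta|$ pointwise. Take $\tau=\varepsilon(\mathbf{u})(x)$ and $\theta=\varepsilon(\mathbf{v})(x)$, square, and integrate; this gives \eqref{11-25-7}. Lemma \ref{lemma2.3} guarantees measurability, so every integral is finite.

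\textbf{Step 3: pointwise monotonicity.} Contract the identity of Step 1 with $\beta=\tau-\theta$. The ellipticity of Lemma \ref{11-28-1}, applied at each point $\tau_s$, gives
\begin{align*}
(a(x,\tau)-a(x,\theta)):(\tau-\theta)\ge C|\tau-\theta|^2 .
\end{align*}
Lemma \ref{11-28-1} must be read as $\sum_{i,j,k,l}\partial_{\varepsilon_{kl}}a_{ij}\,\beta_{kl}\beta_{ij}\ge C|\beta|^2$; its displayed indices appear to contain a typo. Integrating over $\Omega$ yields \eqref{11-25-8}.

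\textbf{Step 4: coercivity.} Take $\mathbf{v}=0$ in Step 3. Since $a_{ij}(x,0)=0$ (evident from the formula, or from Lemma \ref{lemma2.3} with $\varepsilon=0$), \eqref{11-25-8} with $\mathbf{v}=0$ is exactly \eqref{11-25-6}.

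\textbf{Main obstacle.} The delicate point is whether the ellipticity in Lemma \ref{11-28-1} holds uniformly along the whole segment $\tau_s$ with a constant independent of $x$ and of $\tau$. We take this from Lemma \ref{11-28-1} as stated, which is adopted from \cite{B40}. If one wanted to verify it directly, one would compute the Hessian of $\Psi$ from \eqref{11-22-2}: a $\kappa\,\mathrm{tr}^2$ part, plus a term $\mu\Phi'(\rho)$ times the deviatoric identity, plus $2\mu\Phi''(\rho)\,(\mathrm{dev}\,\tau\otimes\mathrm{dev}\,\tau)$. The condition $\Phi'+2\rho\Phi''\ge C_3$ in \eqref{11-25-2} controls the rank-one direction. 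Together with $\mu\Phi'\ge C_1$ and $\kappa\ge k_0$ from \eqref{11-25-1}, this gives a positive lower bound. The upper bounds $\mu\Phi'<\kappa\le k_1$ and $|\rho\Phi''|\le C_2$ give the uniform derivative bound used in Step 2.
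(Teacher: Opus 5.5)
Your proposal is correct and follows the same route as the paper. The paper also writes $\sigma(\mathbf{u})-\sigma(\mathbf{v})$ as the integral \eqref{11-25-10} along the segment between $\varepsilon(\mathbf{v})$ and $\varepsilon(\mathbf{u})$, obtains \eqref{11-25-7} from the derivative bound of Lemma \ref{11-28-2} and \eqref{11-25-8} from the ellipticity of Lemma \ref{11-28-1}, and sets $\mathbf{v}=\mathbf{0}$ for \eqref{11-25-6}. Your additions are sound clarifications rather than a different argument: the observation that $a_{ij}(x,0)=0$ is what justifies $\mathbf{v}=\mathbf{0}$, the index typo in Lemma \ref{11-28-1}, and the Hessian sketch showing how \eqref{11-25-1}--\eqref{11-25-2} imply those lemmas (correct up to normalization constants).
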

\begin{proof}
	We note that
	\begin{align}\label{11-25-10}
		\sigma(\mathbf{u})-\sigma(\mathbf{v})
		&=a_{ij}(x,\varepsilon(\mathbf{u}))-a_{ij}(x,\varepsilon(\mathbf{v}))\nonumber\\
		&=\int_{0}^{1}\left\{ \sum_{k,l=1}^{2}\frac{\partial }{\partial\alpha_{kl}}a_{ij}(x,\mathbf{\alpha}(x,t))\varepsilon_{kl}(\mathbf{u}-\mathbf{v})
		\right\}\mathrm{dt},
	\end{align}
	with $\mathbf{\alpha}(x,t):=\varepsilon(\mathbf{v})+t\varepsilon(\mathbf{u}-\mathbf{v})$, for any $t\in[0,1]$.
	Using the linearity of $\varepsilon$, \eqref{11-25-10} and Lemma \ref{11-28-1}, we obtain
	\begin{align}
		\sum_{i,j=1}^{2}\int_{\Omega}\{
		a_{ij}(x,\varepsilon(\mathbf{u}))-a_{ij}(x,\varepsilon(\mathbf{v}))\}
		\{\varepsilon_{ij}(\mathbf{u})-\varepsilon_{ij}(\mathbf{v})
		\}\mathrm{dx}\geq C_3 \sum_{i,j=1}^{2}\|\varepsilon_{ij}(\mathbf{u}-\mathbf{v})\|^2_{L^2(\Omega)},
	\end{align}
	which complete the proof of (\ref{11-25-8}).
	Taking $\mathbf{v}=\mathbf{0}$ in (\ref{11-25-8}), we obtain \eqref{11-25-6}. Similarly, using \eqref{11-25-10} and Lemma \ref{11-28-2}, we get \eqref{11-25-7}.
\end{proof}

\begin{lemma}\label{11-28-3}
	There exist positive constant $\tilde{C_1}, \tilde{C_2}, \tilde{C_3}$ such that
	\begin{align}
		(\mathcal{N}(\varepsilon(\mathbf{u})),\varepsilon(\mathbf{u})) &\geq \tilde{C_1}\|\varepsilon(\mathbf{u})\|_{L^{2}(\Omega)}^{2}, \label{4.20}\\
		\|\mathcal{N}(\varepsilon(\mathbf{u}))-\mathcal{N}(\varepsilon(\mathbf{v}))\|_{L^{2}(\Omega)} &\leq \tilde{C_2}\|\varepsilon(\mathbf{u})-\varepsilon(\mathbf{v})\|_{L^{2}(\Omega)}, \label{4.21}\\
		(\mathcal{N}(\varepsilon(\mathbf{u}))-\mathcal{N}(\varepsilon(\mathbf{v})),\varepsilon(\mathbf{u})-\varepsilon(\mathbf{v}))
		&\geq \tilde{C_3}\|\varepsilon(\mathbf{u})-\varepsilon(\mathbf{v})\|_{L^{2}(\Omega)}^{2}. \label{4.22}
	\end{align}
\end{lemma}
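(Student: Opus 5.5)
The plan is to transfer the three properties \eqref{11-25-6}--\eqref{11-25-8} of $\sigma$ to $\mathcal{N}$ by writing
\begin{align*}
\mathcal{N}(\varepsilon(\mathbf{u}))=\sigma(\mathbf{u})-\frac{1}{\lambda}\mathrm{tr}(\varepsilon(\mathbf{u}))\mathbf{I}.
\end{align*}
This uses $\div\mathbf{u}=\mathrm{tr}(\varepsilon(\mathbf{u}))$. The subtracted term is linear in $\varepsilon(\mathbf{u})$ with a small coefficient $1/\lambda$. All three estimates should then follow from the linearity of $\mathrm{tr}$ and the pointwise bound $|\mathrm{tr}(\tau)|^2\leq d\,|\tau|^2$, i.e. $\|\div \mathbf{w}\|_{L^2}^2\leq d\,\|\varepsilon(\mathbf{w})\|_{L^2}^2$.

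\textbf{Continuity \eqref{4.21}.} Write $\mathbf{w}=\mathbf{u}-\mathbf{v}$ and use the triangle inequality, \eqref{11-25-7}, and $\|\div\mathbf{w}\,\mathbf{I}\|_{L^2}=\sqrt{d}\,\|\div\mathbf{w}\|_{L^2}\leq d\,\|\varepsilon(\mathbf{w})\|_{L^2}$. This gives \eqref{4.21} with $\tilde C_2=C_2+d/\lambda$. Since $\lambda\geq\lambda_0>0$, the constant can be taken independent of $\lambda$, e.g. $\tilde C_2=C_2+d/\lambda_0$.

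\textbf{Monotonicity \eqref{4.22}.} Since $(\mathbf{I},\varepsilon(\mathbf{w}))=\div\mathbf{w}$ pointwise,
\begin{align*}
(\mathcal{N}(\varepsilon(\mathbf{u}))-\mathcal{N}(\varepsilon(\mathbf{v})),\varepsilon(\mathbf{w}))
=(\sigma(\mathbf{u})-\sigma(\mathbf{v}),\varepsilon(\mathbf{w}))-\frac{1}{\lambda}\|\div\mathbf{w}\|_{L^2}^2
\geq \Bigl(C_3-\frac{d}{\lambda}\Bigr)\|\varepsilon(\mathbf{w})\|_{L^2}^2.
\end{align*}
Taking $\mathbf{v}=\mathbf{0}$, and noting $\mathcal{N}(0)=0$ because $\sigma(\mathbf{0})=0$, yields \eqref{4.20} with the same constant. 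So one may set $\tilde C_1=\tilde C_3=C_3-d/\lambda$.

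\textbf{Main obstacle.} The difficulty is ensuring $C_3-d/\lambda>0$, since the subtraction genuinely reduces coercivity. A naive bound works only for $\lambda>d/C_3$, which is the nearly incompressible regime of interest. I would rather avoid this restriction by exploiting the structure of $\sigma$ itself. Split $\sigma$ into its volumetric part $\kappa\,\mathrm{tr}(\varepsilon)\mathbf{I}$ and its deviatoric part $\tilde\mu(\mathrm{dev}\,\varepsilon)\,\mathrm{dev}$-type term (using $\tilde\lambda=\kappa-\tilde\mu/2$). Then the correction $-\frac{1}{\lambda}\mathrm{tr}(\varepsilon)\mathbf{I}$ only modifies the volumetric coefficient, from $\kappa$ to $\kappa-1/\lambda$. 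The derivative computation underlying Lemma~\ref{11-28-1} can be redone with $\kappa$ replaced by $\kappa-1/\lambda$. Via \eqref{11-25-2} and \eqref{11-25-1}, this gives a positive lower bound of the form $\min\{C_3\text{-type deviatoric constant},\,k_0-1/\lambda\}$, positive as soon as $\lambda>1/k_0$. This is a mild assumption to be stated explicitly in the proof, consistent with $\lambda$ being large in the intended application. The integral representation \eqref{11-25-10} then carries over verbatim to give \eqref{4.22}, and hence \eqref{4.20}.
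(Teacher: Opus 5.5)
Your main argument is the paper's proof. For \eqref{4.21} you use the triangle inequality with $\|\operatorname{div}\mathbf{w}\|_{L^2}^2\le d\,\|\varepsilon(\mathbf{w})\|_{L^2}^2$ (the paper takes $d=2$); for \eqref{4.22} you subtract $\frac{1}{\lambda}\|\operatorname{div}\mathbf{w}\|_{L^2}^2$ from \eqref{11-25-8}, with $\lambda$ assumed large enough that $C_3-d/\lambda>0$, and then set $\mathbf{v}=\mathbf{0}$ for \eqref{4.20}.

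Your optional refinement is also sound and goes beyond the paper, which does not pursue it. It uses the 2D split $\mathcal{N}(\varepsilon)=(\kappa-\frac{1}{\lambda})\mathrm{tr}(\varepsilon)\mathbf{I}+\tilde{\mu}(\mathrm{dev}\,\varepsilon)\bigl(\varepsilon-\frac12\mathrm{tr}(\varepsilon)\mathbf{I}\bigr)$. The Hessian bound then needs only $\kappa-1/\lambda\ge k_0-1/\lambda>0$ together with the deviatoric conditions in \eqref{11-25-2}, and not $\mu\Phi'<\kappa-1/\lambda$. This gives the milder condition $\lambda>1/k_0$.
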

\begin{proof}
	First, we know a fact that
	\begin{align}\label{11-25-11}
		\|\operatorname{div}\mathbf{u}\|^2_{L^2(\Omega)}
		\leq 2\sum_{i,j=1}^{2}\|\varepsilon_{ij}(\mathbf{u})\|^2_{L^2(\Omega)}.
	\end{align}
	Then, using \eqref{11-25-8} and \eqref{11-25-11}
	\begin{align}
		(\mathcal{N}(\varepsilon(\mathbf{u}))-\mathcal{N}(\varepsilon(\mathbf{v})),\varepsilon(\mathbf{u})-\varepsilon(\mathbf{v}))
		&=(\sigma(\mathbf{u})-\frac{1}{\lambda}\operatorname{div}\mathbf{u}\mathbf{I}-\sigma(\mathbf{v})+\frac{1}{\lambda}\operatorname{div}\mathbf{v}\mathbf{I},\varepsilon(\mathbf{u})-\varepsilon(\mathbf{v}))\nonumber\\	
		&=(\sigma(\mathbf{u})-\sigma(\mathbf{v}),\varepsilon(\mathbf{u})-\varepsilon(\mathbf{v}))
		-\frac{1}{\lambda}\|\operatorname{div}(\mathbf{u-v})\|^2_{L^2(\Omega)}	\nonumber\\
		&\geq (C_3-\frac{2}{\lambda})\|\varepsilon(\mathbf{u})-\varepsilon(\mathbf{v})\|^2_{L^2(\Omega)},
	\end{align}
	Take a suitable $\lambda$ such that $\tilde{C_3}=C_3-\frac{2}{\lambda}>0$, we obtain \eqref{4.22}. Taking $\mathbf{v}=\mathbf{0}$ in \eqref{4.22}, we get \eqref{4.20}.		
	Similarity, using (\ref{11-25-7}) and \eqref{11-25-11}, we get
	\begin{align}
		\|\mathcal{N}(\varepsilon(\mathbf{u}))-\mathcal{N}(\varepsilon(\mathbf{v}))\|_{L^{2}(\Omega)}
		&=\|\sigma(\mathbf{u})-\frac{1}{\lambda}\operatorname{div}\mathbf{u}\mathbf{I}-\sigma(\mathbf{v})+\frac{1}{\lambda}\operatorname{div}\mathbf{v}\mathbf{I}\|_{L^{2}(\Omega)}\nonumber\\
		&\leq \|\sigma(\mathbf{u})-\sigma(\mathbf{v})\|_{L^{2}(\Omega)}+\frac{1}{\lambda}\|\operatorname{div}(\mathbf{u-v})\mathbf{I}\|_{L^{2}(\Omega)}\nonumber\\
		&\leq (C_2+\frac{2}{\lambda})\|\varepsilon(\mathbf{u})-\varepsilon(\mathbf{u})\|_{L^2(\Omega)},
	\end{align}
	Let $\tilde{C_2}=C_2+\frac{2}{\lambda}$, we obtain \eqref{4.21}.
\end{proof}

\begin{remark}
	Using \eqref{11-26-2} and Lemma $\mathrm{\ref{lemma2.3}}$, we get
	\begin{align}\label{11-26-3}
		\|\sigma(\mathbf{u})\|_{L^2(\Omega)}=\|\mathcal{N}(\varepsilon(\mathbf{u}))+\frac{1}{\lambda}\operatorname{div}\mathbf{u}\mathbf{I}\|_{L^2(\Omega)}
		\leq C\|\varepsilon(\mathbf{u})\|_{L^2(\Omega)}.
	\end{align}
	Using \eqref{11-26-3} and \eqref{11-25-11}, we get
	\begin{align}\label{11-26-4}
		\|\mathcal{N}(\varepsilon(\mathbf{u}))\|_{L^2(\Omega)} \leq \|\frac{1}{\lambda}\operatorname{div}\mathbf{u}\mathbf{I}\|_{L^2(\Omega)} + C\|\varepsilon(\mathbf{u})\|_{L^2(\Omega)}
		\leq C_1 \|\varepsilon(\mathbf{u})\|_{L^2(\Omega)}.
	\end{align}
	Using Cauchy-Schwarz inequality, \eqref{11-26-4} and \eqref{4.20}, we get
	\begin{align}
		\tilde{C_1}\|\varepsilon(\mathbf{u})\|_{L^{2}(\Omega)}^{2}
		\leq
		(\mathcal{N}(\varepsilon(\mathbf{u})), \varepsilon(\mathbf{u}))
		\leq \|\mathcal{N}(\varepsilon(\mathbf{u}))\|_{L^2(\Omega)}\|\varepsilon(\mathbf{u})\|_{L^2(\Omega)}
		\leq C_1 \|\varepsilon(\mathbf{u})\|^2_{L^2(\Omega)}.
	\end{align}
	Then, we deduce there exist a positive constant $\tilde{C}$ satisfying $\tilde{C_1}\leq \tilde{C}\leq C_1$ such that
	\begin{align}\label{11-26-5}
		(\mathcal{N}(\varepsilon(\mathbf{u})), \varepsilon(\mathbf{u}))=\tilde{C}(\varepsilon(\mathbf{u}),\varepsilon(\mathbf{u})).
	\end{align}
\end{remark}
Next, we derive the energy estimate and the existence and uniqueness of the weak solution.
\begin{lemma}\label{11-27-7}
	Every weak solution $\mathbf{u}, \xi, \eta$ of problem \eqref{2.24}-\eqref{2.26} satisfies the following energy law:
	\begin{align}\label{2.47}
		J(t) + \frac{1}{\mu_f} \int_0^t \bigl(K(\nabla(\kappa_{1}\xi+\kappa_{2}\eta)-\rho_f\mathbf{g}), \nabla (\kappa_{1}\xi+\kappa_{2}\eta)\bigr) \mathrm{ds}
		-\int_{0}^{t}(\xi_t,\operatorname{div}\mathbf{u})\mathrm{ds}&\nonumber\\
		-\int_0^t \bigl(\phi, \kappa_{1}\xi+\kappa_{2}\eta\bigr) \mathrm{ds}
		-\int_0^t \langle \phi_1, \kappa_{1}\xi+\kappa_{2}\eta \rangle \mathrm{ds} =J(0)&
	\end{align}
	for all $t\in [0, T]$,  where
	\begin{align}\label{2.48}
		J(t):&=  \tilde{C}\norm{\varepsilon(\mathbf{u}(t))}{L^2(\Ome)}^2+\frac{\kappa_2}{2} \norm{\eta(t)}{L^2(\Ome)}^2 +\frac{\kappa_3}{2} \norm{\xi(t)}{L^2(\Ome)}^2-(\mathbf{f}(t),\mathbf{u}(t))-\langle \mathbf{f}_1(t), \mathbf{u}(t) \rangle .
	\end{align}
\end{lemma}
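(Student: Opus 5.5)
The plan is to derive the identity \eqref{2.47} by testing the three weak equations with the natural energy test functions, adding the results, and integrating in time. The energy identity must produce $\frac{d}{dt}$ of $J(t)$, so I work at a.e. time $t$ after (formally) differentiating the time-dependent relations.

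\textbf{Step 1: the momentum equation.} Take $\mathbf{v}=\mathbf{u}_t$ in \eqref{2.24}, which gives
\[
\bigl(\mathcal{N}(\varepsilon(\mathbf{u})),\varepsilon(\mathbf{u}_t)\bigr)-(\xi,\operatorname{div}\mathbf{u}_t)=(\mathbf{f},\mathbf{u}_t)+\langle\mathbf{f}_1,\mathbf{u}_t\rangle .
\]
The first term is handled with the representation \eqref{11-26-5}, $(\mathcal{N}(\varepsilon(\mathbf{u})),\varepsilon(\mathbf{u}))=\tilde C\|\varepsilon(\mathbf{u})\|^2$. Following the paper's convention, I treat $\tilde C$ as a constant and identify the term with $\frac{d}{dt}\bigl[\tilde C\|\varepsilon(\mathbf{u})\|^2\bigr]$ up to the factor conventions built into $J$. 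This is exactly the delicate step; see below. The forcing terms are rewritten as
\[
(\mathbf{f},\mathbf{u}_t)+\langle\mathbf{f}_1,\mathbf{u}_t\rangle=\frac{d}{dt}\bigl[(\mathbf{f},\mathbf{u})+\langle\mathbf{f}_1,\mathbf{u}\rangle\bigr]-(\mathbf{f}_t,\mathbf{u})-\langle\mathbf{f}_{1,t},\mathbf{u}\rangle ,
\]
and the $\mathbf{f}_t$ terms are dropped, as the stated law implicitly assumes time-independent loads. For the pressure-type term I use the product rule,
\[
(\xi,\operatorname{div}\mathbf{u}_t)=\frac{d}{dt}(\xi,\operatorname{div}\mathbf{u})-(\xi_t,\operatorname{div}\mathbf{u}).
\]
The term $-(\xi_t,\operatorname{div}\mathbf{u})$ is left explicit, which explains its appearance in \eqref{2.47}.

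\textbf{Step 2: the constraint equation.} Differentiate \eqref{2.25} in time and test with $\varphi=\xi$:
\[
\kappa_3(\xi_t,\xi)+(\operatorname{div}\mathbf{u}_t,\xi)=\kappa_1(\eta_t,\xi).
\]
This gives $\frac{\kappa_3}{2}\frac{d}{dt}\|\xi\|^2$ and replaces $(\xi,\operatorname{div}\mathbf{u}_t)$ by $\kappa_1(\eta_t,\xi)-\kappa_3(\xi_t,\xi)$.

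\textbf{Step 3: the diffusion equation.} Test \eqref{2.26} with $\psi=\kappa_1\xi+\kappa_2\eta$. Then
\[
(\eta_t,\kappa_1\xi+\kappa_2\eta)=\kappa_1(\eta_t,\xi)+\frac{\kappa_2}{2}\frac{d}{dt}\|\eta\|^2 ,
\]
and the diffusion term is exactly $\frac{1}{\mu_f}\bigl(K(\nabla(\kappa_1\xi+\kappa_2\eta)-\rho_f\mathbf{g}),\nabla(\kappa_1\xi+\kappa_2\eta)\bigr)$. The right-hand side contains the $\phi$ and $\phi_1$ terms.

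\textbf{Step 4: assembly.} Add the identities from Steps 1--3 so that the cross terms $\kappa_1(\eta_t,\xi)$ cancel. What remains is $\frac{d}{dt}J$ plus the dissipation term, the $-(\xi_t,\operatorname{div}\mathbf{u})$ term and the source terms. Integrating from $0$ to $t$ then yields \eqref{2.47}. Regularity justification, that is, pairing $\eta_t\in H^1(\Omega)'$ with $H^1$ functions and the time differentiation, uses the standard Steklov-averaging/density argument for the spaces in Definition \ref{weak2}.

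\textbf{Main obstacle.} The main obstacle is the nonlinear term $(\mathcal{N}(\varepsilon(\mathbf{u})),\varepsilon(\mathbf{u}_t))$. In general it equals $\frac{d}{dt}\int_\Omega \Psi$-type stored energy, not $\frac{d}{dt}\tilde C\|\varepsilon(\mathbf{u})\|^2$. I would first try to justify it via the potential structure \eqref{2025-6-10-1}: $\mathcal{N}$ is the gradient of $\Psi(\varepsilon)-\frac{1}{2\lambda}\mathrm{tr}^2\varepsilon$, so the term is an exact time derivative. Failing that, I would invoke \eqref{11-26-5} as the paper does, with $\tilde C$ frozen, and accept the bookkeeping of constants.

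I also expect a difficulty in reconciling the $\xi$--$\operatorname{div}\mathbf{u}$ terms so that exactly $-\int_0^t(\xi_t,\operatorname{div}\mathbf{u})$ survives. The choice between using the product rule in Step 1 and substituting from Step 2 must be made consistently. Because there are two routes for the same term, the stated form, which has both $\frac{\kappa_3}{2}\|\xi\|^2$ in $J$ and the leftover $-(\xi_t,\operatorname{div}\mathbf{u})$ integral, suggests the paper tests \eqref{2.24} with $\mathbf{u}_t$ and \eqref{2.25} with $\xi_t$-related quantities. I would adjust the test functions until precisely these terms remain.
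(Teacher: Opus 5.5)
Your three test identities cannot yield \eqref{2.47}, and the gap is the step you flag at the end. When you add Steps 1--3, the term $-(\xi,\operatorname{div}\mathbf{u}_t)$ from Step 1 cancels exactly against $(\operatorname{div}\mathbf{u}_t,\xi)$ from Step 2. Applying the product rule first changes nothing. It only produces $+(\xi_t,\operatorname{div}\mathbf{u})$ (note the sign) together with $-\frac{d}{dt}(\xi,\operatorname{div}\mathbf{u})$, and these recombine with Step 2's term to zero. So no $(\xi_t,\operatorname{div}\mathbf{u})$ term survives, contrary to your Step 4.

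The nonlinear term has the same defect. \eqref{11-26-5} is a statement about $(\mathcal{N}(\varepsilon(\mathbf{u})),\varepsilon(\mathbf{u}))$ only. The pairing $(\mathcal{N}(\varepsilon(\mathbf{u})),\varepsilon(\mathbf{u}_t))$ is at most half of its time derivative: even for $\mathcal{N}(\varepsilon)=\tilde C\varepsilon$ you would get $\frac{\tilde C}{2}\|\varepsilon(\mathbf{u})\|^2_{L^2(\Omega)}$ in $J$. Your potential-energy fallback gives $\int_\Omega\bigl(\Psi(\varepsilon(\mathbf{u}))-\frac{1}{2\lambda}\mathrm{tr}^2\varepsilon(\mathbf{u})\bigr)\mathrm{dx}$ instead of $\tilde C\|\varepsilon(\mathbf{u})\|^2_{L^2(\Omega)}$, which is a different identity. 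Finally, nothing in the statement makes the loads time-independent, so you may not drop $(\mathbf{f}_t,\mathbf{u})+\langle\mathbf{f}_{1t},\mathbf{u}\rangle$.

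The missing idea is a fourth identity: differentiate \eqref{2.24} in time and test with $\mathbf{v}=\mathbf{u}$,
\[
\bigl(\mathcal{N}_t(\varepsilon(\mathbf{u})),\varepsilon(\mathbf{u})\bigr)-(\xi_t,\operatorname{div}\mathbf{u})=(\mathbf{f}_t,\mathbf{u})+\langle\mathbf{f}_{1t},\mathbf{u}\rangle .
\]
Add it to your Steps 1--3, keeping the cancellations of $(\xi,\operatorname{div}\mathbf{u}_t)$ between Steps 1 and 2 and of $\kappa_1(\eta_t,\xi)$ between Steps 2 and 3. This repairs all three points at once:
\begin{itemize}
\item The two $\mathcal{N}$-terms combine into $\frac{d}{dt}(\mathcal{N}(\varepsilon(\mathbf{u})),\varepsilon(\mathbf{u}))$. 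By \eqref{11-26-5}, with $\tilde C$ treated as a fixed constant as the paper does, this integrates to $\tilde C\|\varepsilon(\mathbf{u}(t))\|^2_{L^2(\Omega)}-\tilde C\|\varepsilon(\mathbf{u}(0))\|^2_{L^2(\Omega)}$ with the full coefficient $\tilde C$.
\item The load terms combine into $\frac{d}{dt}\bigl[(\mathbf{f},\mathbf{u})+\langle\mathbf{f}_1,\mathbf{u}\rangle\bigr]$, with no assumption on $\mathbf{f}_t$.
\item $-(\xi_t,\operatorname{div}\mathbf{u})$ is the one uncancelled cross term, which is why it appears in \eqref{2.47}.
\end{itemize}
Integrating from $0$ to $t$ then gives the stated law.

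Your reservation that the $\tilde C$ in \eqref{11-26-5} may vary with $\mathbf{u}(t)$ is well founded. However, the stated $J$ rests on that convention, so a rigorous potential-based energy would prove a different law rather than this one.
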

\begin{proof}
	Setting $\mathbf{v}=\mathbf{u}_t$ in \eqref{2.24} and setting $\varphi = \xi$ in \eqref{2.25} after taking the derivative with respect to time $t$, we get
	\begin{align}
		\bigl(\mathcal{N}(\varepsilon(\mathbf{u})), \varepsilon(\mathbf{u}_t) \bigr)-\bigl( \xi, \operatorname{div} \mathbf{u}_t \bigr)
		&= (\mathbf{f}, \mathbf{u}_t)+\langle \mathbf{f}_1,\mathbf{u}_t\rangle
		, \label{11-27-1}\\
		\kappa_3 \bigl( \xi_t, \xi \bigr) +\bigl(\operatorname{div}\mathbf{u}_t, \xi \bigr)
		&= \kappa_1\bigl(\eta_t, \xi \bigr).\label{11-27-2}
	\end{align}
	Setting $\psi=\kappa_1\xi+\kappa_2\eta$ in \eqref{2.26}, we get
	\begin{align}\label{11-27-3}
		\bigl(\eta_t, \kappa_1\xi+\kappa_2\eta \bigr)
		&+\frac{1}{\mu_f} \bigl(K(\nabla (\kappa_1\xi +\kappa_2\eta) -\rho_f\mathbf{g}), \nabla (\kappa_1\xi+\kappa_2\eta) \bigr)\nonumber\\
		&=\bigl(\phi,\kappa_1\xi+\kappa_2\eta\bigr)
		+\langle \phi_1,\kappa_1\xi+\kappa_2\eta \rangle.
	\end{align}
	Setting $\mathbf{v}=\mathbf{u}$ in \eqref{2.24} after taking the derivative with respect to time $t$, we get
	\begin{align}\label{11-27-4}
		\bigl(\mathcal{N}_t(\varepsilon(\mathbf{u})), \varepsilon(\mathbf{u}) \bigr)-\bigl( \xi_t, \operatorname{div} \mathbf{u} \bigr)
		= (\mathbf{f}_t, \mathbf{u})+\langle \mathbf{f}_{1t},\mathbf{u}\rangle.
	\end{align}
	Adding \eqref{11-27-1}-\eqref{11-27-4} and integrating from $0$ to $t$, we obtain
	\begin{align}
	&\int_{0}^{t}((\mathcal{N}_t(\varepsilon(\mathbf{u})),\varepsilon(\mathbf{u}))+(\mathcal{N}(\varepsilon(\mathbf{u})),\varepsilon(\mathbf{u}_t)))\mathrm{ds}
	+\int_{0}^{t}\kappa_3(\xi_t,\xi)\mathrm{ds}
	+\int_{0}^{t}\kappa_2(\eta_t,\eta)\mathrm{ds}\nonumber\\
	&+ \frac{1}{\mu_f} \int_0^t \bigl(K(\nabla(\kappa_{1}\xi+\kappa_{2}\eta)-\rho_f\mathbf{g}), \nabla (\kappa_{1}\xi+\kappa_{2}\eta)\bigr) \mathrm{ds}
	-\int_{0}^{t}(\xi_t,\operatorname{div}\mathbf{u})\mathrm{ds}\nonumber\\
	&=\int_{0}^{t}[(\mathbf{f},\mathbf{u}_t)+\langle \mathbf{f}_1,\mathbf{u}_t\rangle
	+(\mathbf{f}_t,\mathbf{u})+\langle \mathbf{f}_{1t},\mathbf{u}\rangle
	+(\phi,\kappa_1\xi+\kappa_2\eta)
	+\langle \phi_1,\kappa_1\xi+\kappa_2\eta \rangle]\mathrm{ds}.
   \end{align}
    Using \eqref{11-26-5} and the fact that
    \begin{align*}
    	\frac{d}{dt}(\mathcal{N}(\varepsilon(\mathbf{u})),\varepsilon(\mathbf{u}))=&
    	(\mathcal{N}_t(\varepsilon(\mathbf{u})),\varepsilon(\mathbf{u}))+(\mathcal{N}(\varepsilon(\mathbf{u})),\varepsilon(\mathbf{u}_t)),\\
    	\int_{0}^{t}(\xi_t,\xi)\mathrm{ds}=\int_{\Omega}(\int_{0}^{t}\xi_t\xi\mathrm{ds})\mathrm{d}{\Omega}
    	=&\frac{1}{2}\int_{\Omega}(\xi^2(t)-\xi^2(0))\mathrm{d}\Omega
    	=\frac{1}{2}\|\xi(t)\|^2_{L^2(\Omega)}-\frac{1}{2}\|\xi(0)\|^2_{L^2(\Omega)},
    \end{align*}
    we get
    \begin{align}\label{11-27-5}
    	&\tilde{C}\|\varepsilon(\mathbf{u}(t))\|^2_{L^2(\Omega)}
    	+\frac{\kappa_3}{2}\|\xi(t)\|^2_{L^2(\Omega)}
    	+\frac{\kappa_2}{2}\|\eta(t)\|^2_{L^2(\Omega)}
    	-\int_{0}^{t}(\xi_t,\operatorname{div}\mathbf{u})\mathrm{ds}\nonumber\\
    	&+ \frac{1}{\mu_f} \int_0^t \bigl(K(\nabla(\kappa_{1}\xi+\kappa_{2}\eta)-\rho_f\mathbf{g}), \nabla (\kappa_{1}\xi+\kappa_{2}\eta)\bigr) \mathrm{ds}
    	-(\mathbf{f}(t),\mathbf{u}(t))
    	-\langle \mathbf{f}_{1}(t),\mathbf{u}(t)\rangle
    	\nonumber\\
    	&=\tilde{C}\|\varepsilon(\mathbf{u}(0))\|^2_{L^2(\Omega)}+\frac{\kappa_3}{2}\|\xi(0)\|^2_{L^2(\Omega)}+\frac{\kappa_2}{2}\|\eta(0)\|^2_{L^2(\Omega)}
    	-(\mathbf{f}(0),\mathbf{u}(0))
    	-\langle \mathbf{f}_{1}(0),\mathbf{u}(0)\rangle
    	\nonumber\\
    	&+\int_0^t (\phi, \kappa_{1}\xi+\kappa_{2}\eta) \mathrm{ds}
    	+\int_0^t \langle \phi_1, \kappa_{1}\xi+\kappa_{2}\eta \rangle \mathrm{ds}.
    \end{align}
    The proof is complete.
 \end{proof}
 
 From Lemma \ref{11-27-7}, we immediately have the following prior estimate for the weak solution.
 \begin{lemma}\label{11-28-4}
 	Suppose that $\mathbf{u}_0$ and $p_0$ are sufficiently smooth, then there hold the following estimate for the solution to problem \eqref{2.24}-\eqref{2.26}:
 	 \begin{align}\label{11-27-8}
 		\tilde{C}\|\varepsilon(\mathbf{u}(t))\|^2_{L^2(\Omega)}
 		&+\kappa_3\|\xi(t)\|^2_{L^2(\Omega)}
 		+\frac{\kappa_2}{2}\|\eta(t)\|^2_{L^2(\Omega)}\nonumber\\
 		&+ \frac{1}{\mu_f} \int_0^t \bigl(K(\nabla p-\rho_f\mathbf{g}), \nabla p\bigr) \mathrm{ds}\leq C(N^2_0+M^2_0),
 	\end{align}
 	where
 	\begin{align*}
 		N^2_0=&
 		\|\varepsilon(\mathbf{u}(0))\|^2_{L^2(\Omega)}+\|\xi(0)\|^2_{L^2(\Omega)}+\|\eta(0)\|^2_{L^2(\Omega)}\\
 		&+\|\mathbf{f}(0)\|^2_{L^2(\Omega)}
 		+\|\mathbf{u}(0)\|^2_{L^2(\Omega)}
 		+\|\mathbf{f}_1(0)\|^2_{L^2(\partial\Omega)}
 		+\|\mathbf{u}(0)\|^2_{L^2(\partial\Omega)},\\
 		M^2_0=&\|\mathbf{f}\|^2_{L^\infty(0,T,L^2(\Omega))}
 		+\|\mathbf{f}_1\|^2_{L^\infty(0,T,L^2(\partial\Omega))}
 		+\|\phi\|^2_{L^2(0,T,L^2(\Omega))}
 		+\|\phi_1\|^2_{L^2(0,T,L^2(\partial\Omega))}.
 	\end{align*} 
 \end{lemma}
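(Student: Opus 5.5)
Starting from the energy identity of Lemma \ref{11-27-7}, I would bound the right-hand side and the indefinite terms on the left, then absorb pieces into the left-hand side. The main obstacle is the cross term $-\int_0^t(\xi_t,\operatorname{div}\mathbf{u})\,\mathrm{ds}$ in \eqref{2.47}, which has no sign. To remove it I would go back to \eqref{11-27-2}--\eqref{11-27-4}. Differentiating \eqref{2.25} in time and testing with $\varphi=\xi$ gives
$$(\operatorname{div}\mathbf{u}_t,\xi)=\kappa_1(\eta_t,\xi)-\kappa_3(\xi_t,\xi).$$
Integrating in time, $-\int_0^t(\xi_t,\operatorname{div}\mathbf{u})\,\mathrm{ds}=(\xi(0),\operatorname{div}\mathbf{u}(0))-(\xi(t),\operatorname{div}\mathbf{u}(t))+\int_0^t(\xi,\operatorname{div}\mathbf{u}_t)\,\mathrm{ds}$. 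Substituting the previous identity, the term $-\kappa_3\int_0^t(\xi_t,\xi)$ adds another $\frac{\kappa_3}{2}\|\xi\|^2$ to the left-hand side; this explains the coefficient $\kappa_3$ (rather than $\kappa_3/2$) in \eqref{11-27-8}. The remaining term $\kappa_1\int_0^t(\eta_t,\xi)$ combines with $\kappa_2\int_0^t(\eta_t,\eta)$ into $\int_0^t(\eta_t,p)$ via $p=\kappa_1\xi+\kappa_2\eta$, as in \eqref{11-27-3}. Alternatively, testing \eqref{2.25} at time $t$ with $\xi(t)$ expresses $(\xi(t),\operatorname{div}\mathbf{u}(t))=\kappa_1(\eta,\xi)-\kappa_3\|\xi\|^2$, which gives the sign-controlled bookkeeping. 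The boundary-in-time terms $(\xi(t),\operatorname{div}\mathbf{u}(t))$ are handled by Young's inequality together with \eqref{11-25-11}: a small multiple of $\|\varepsilon(\mathbf{u}(t))\|^2$ is absorbed by $\tilde{C}\|\varepsilon(\mathbf{u}(t))\|^2$, and the $\xi$ part goes into $\kappa_3\|\xi\|^2$ or is estimated through \eqref{2.24} by the inf-sup stability of the divergence. This is the step I expect to need the most care.

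Next, the load terms. By Korn's inequality \eqref{11-25-4} on $\mathbf{H}^1_\perp(\Omega)$ and the trace inequality, $|(\mathbf{f}(t),\mathbf{u}(t))+\langle\mathbf{f}_1(t),\mathbf{u}(t)\rangle|\le \frac{\tilde C}{2}\|\varepsilon(\mathbf{u}(t))\|^2+C(\|\mathbf{f}\|^2_{L^\infty(L^2)}+\|\mathbf{f}_1\|^2_{L^\infty(L^2(\partial\Omega))})$. The initial-time terms in $J(0)$ are bounded by Cauchy--Schwarz, giving exactly the quantity $N_0^2$.

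For the source terms, $\int_0^t(\phi,p)+\langle\phi_1,p\rangle$, I would split $p$ into its mean plus a mean-zero part. The mean-zero part is bounded by the Poincar\'e inequality \eqref{12-12-1} and the trace inequality in terms of $\|\nabla p\|$, and half of it is absorbed into the Darcy term. For that term I would use the uniform positivity of $K$ and Young's inequality: $(K(\nabla p-\rho_f\mathbf{g}),\nabla p)\ge \frac{K_1}{2}\|\nabla p\|^2-C\|\rho_f\mathbf{g}\|^2$, keeping the form $(K(\nabla p-\rho_f\mathbf{g}),\nabla p)$ as it appears in \eqref{11-27-8}. The mean of $p$ is controlled by $\|p\|\le\kappa_1\|\xi\|+\kappa_2\|\eta\|$ together with Gronwall's inequality.

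Finally, I would collect all terms, absorb the small multiples into the left-hand side, and apply Gronwall's lemma to the remaining $\int_0^t(\|\xi\|^2+\|\eta\|^2)\,\mathrm{ds}$ contributions. This yields \eqref{11-27-8} with $C$ depending on $T$, $K_1$, $\mu_f$, and the Korn, trace and Poincar\'e constants.
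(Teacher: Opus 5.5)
Your argument breaks down at the step you flagged, and the bookkeeping leading to it has a sign error.

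Integrating the cross term by parts and inserting the differentiated constraint \eqref{11-27-2} puts $-\kappa_3\int_0^t(\xi_t,\xi)\,\mathrm{ds}=-\frac{\kappa_3}{2}\|\xi(t)\|^2_{L^2(\Omega)}+\frac{\kappa_3}{2}\|\xi(0)\|^2_{L^2(\Omega)}$ on the left. This \emph{cancels} the $\frac{\kappa_3}{2}\|\xi(t)\|^2_{L^2(\Omega)}$ in $J(t)$ rather than doubling it; you are undoing the use of \eqref{11-27-2} in deriving \eqref{2.47}. Your ``alternative'' of testing \eqref{2.25} at time $t$ with $\xi(t)$ restores $\kappa_3\|\xi(t)\|^2_{L^2(\Omega)}$. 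Together, the two manipulations reproduce what the paper gets in one line by inserting \eqref{3.7} into the cross term: $-\int_0^t(\xi_t,\operatorname{div}\mathbf{u})\,\mathrm{ds}=\frac{\kappa_3}{2}\bigl(\|\xi(t)\|^2_{L^2(\Omega)}-\|\xi(0)\|^2_{L^2(\Omega)}\bigr)-\kappa_1\int_0^t(\xi_t,\eta)\,\mathrm{ds}$.

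The decisive gap is that you never bound the leftover $-\kappa_1\int_0^t(\xi_t,\eta)\,\mathrm{ds}$. In your form this is $\kappa_1\int_0^t(\eta_t,\xi)\,\mathrm{ds}-\kappa_1(\eta(t),\xi(t))$ plus initial data. None of the routes you mention closes it:
\begin{itemize}
\item Recombining $\kappa_1(\eta_t,\xi)+\kappa_2(\eta_t,\eta)=(\eta_t,p)$ and invoking \eqref{11-27-3} is circular. It cancels the Darcy term, the source terms and the $\frac{\kappa_2}{2}\|\eta(t)\|^2_{L^2(\Omega)}$ you need, so no estimate results.
\item Testing \eqref{2.24} with $\mathbf{u}(t)$ to handle $(\xi(t),\operatorname{div}\mathbf{u}(t))$ merely cancels $\tilde C\|\varepsilon(\mathbf{u}(t))\|^2_{L^2(\Omega)}$.
\item The inf-sup bound $\|\xi\|_{L^2(\Omega)}\le C(\|\varepsilon(\mathbf{u})\|_{L^2(\Omega)}+\text{data})$ comes with a constant that in general cannot be absorbed into $\tilde C$.
\item Young's inequality on $\kappa_1(\eta(t),\xi(t))$ against $\kappa_3\|\xi(t)\|^2+\frac{\kappa_2}{2}\|\eta(t)\|^2$ needs $\kappa_1^2<2\kappa_2\kappa_3$, i.e.\ $\lambda\alpha^2<2c_0$. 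This fails precisely in the large-$\lambda$ regime the paper targets.
\item A duality bound on $\int_0^t\langle\eta_t,\xi\rangle\,\mathrm{ds}$ also fails. \eqref{2.26} controls $\eta_t$ in $L^2(0,T;H^1(\Omega)')$ through $\nabla p$, but $\nabla\xi$ is not controlled by the left-hand side. Writing $\xi=(p-\kappa_2\eta)/\kappa_1$ leads back to the circular step.
\end{itemize}

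Do not expect the paper to supply the missing bound. It estimates $-\kappa_1\int_0^t(\xi_t,\eta)\,\mathrm{ds}$ by $\frac{\kappa_1^2}{2\epsilon}\int_0^t\|\eta\|^2_{L^2(\Omega)}\,\mathrm{ds}+\frac{\epsilon}{2}\int_0^t\|\xi_t\|^2_{L^2(\Omega)}\,\mathrm{ds}$ and appeals to Gronwall. But $\int_0^t\|\xi_t\|^2_{L^2(\Omega)}\,\mathrm{ds}$ is not controlled by the left side of \eqref{11-27-8} either.

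The cross term only arises because \eqref{11-27-4} is added. Summing just \eqref{11-27-1}--\eqref{11-27-3} gives the exact identity $-(\xi,\operatorname{div}\mathbf{u}_t)+(\eta_t,\kappa_1\xi+\kappa_2\eta)=\frac{d}{dt}\bigl[\frac{\kappa_3}{2}\|\xi\|^2_{L^2(\Omega)}+\frac{\kappa_2}{2}\|\eta\|^2_{L^2(\Omega)}\bigr]$. The potential structure \eqref{2025-6-10-1} gives $(\mathcal{N}(\varepsilon(\mathbf{u})),\varepsilon(\mathbf{u}_t))=\frac{d}{dt}\int_\Omega\bigl[\Psi(\varepsilon(\mathbf{u}))-\frac{1}{2\lambda}(\operatorname{div}\mathbf{u})^2\bigr]\mathrm{dx}$. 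By \eqref{11-25-1}--\eqref{11-25-2}, this integrand minus $\mu\Phi(0)$ is comparable to $|\varepsilon(\mathbf{u})|^2$ once $\lambda$ is large.

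The price is that $(\mathbf{f},\mathbf{u}_t)+\langle\mathbf{f}_1,\mathbf{u}_t\rangle$ must be integrated by parts in time. So norms of $\mathbf{f}_t$ and $\mathbf{f}_{1t}$ (e.g.\ in $L^2$ in time) must be added to the data $M_0^2$, and the statement as written is not recovered exactly. With that modification, the rest of your plan goes through:
\begin{itemize}
\item the load terms, via Korn on $\mathbf{H}^1_\perp(\Omega)$ plus the trace inequality;
\item the source terms, via the mean/mean-zero splitting of $p$, Poincar\'e, absorption into the Darcy term, and Gronwall.
\end{itemize}
The factor of two between $\kappa_3/2$ and $\kappa_3$ is absorbed into $C$.
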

 \begin{proof}
  Using \eqref{3.7} and Young inequality, we know
  \begin{align}
  	\int_{0}^{t}(\xi_t,\operatorname{div}\mathbf{u})\mathrm{ds}
  	&=\int_{0}^{t}(\xi_t,\kappa_1\eta-\kappa_3\xi)\mathrm{ds}
  	=\int_{0}^{t}(\xi_t,\kappa_1\eta)\mathrm{ds}-\frac{\kappa_3}{2}(\|\xi(t)\|_{L^2(\Omega)}-\|\xi(0)\|_{L^2(\Omega)})\nonumber\\
  	&\leq \frac{1}{2\epsilon}\int_{0}^{t}\|\kappa_1\eta\|^2_{L^2(\Omega)}\mathrm{ds}
  	+\frac{\epsilon}{2}\int_{0}^{t}\|\xi_t\|^2_{L^2(\Omega)}\mathrm{ds}
  	-\frac{\kappa_3}{2}(\|\xi(t)\|_{L^2(\Omega)}-\|\xi(0)\|_{L^2(\Omega)}).
  \end{align}
 Then, the \eqref{11-27-5} becomes
  \begin{align}\label{11-27-6}
 	&\tilde{C}\|\varepsilon(\mathbf{u}(t))\|^2_{L^2(\Omega)}
 	+\kappa_3\|\xi(t)\|^2_{L^2(\Omega)}
 	+\frac{\kappa_2}{2}\|\eta(t)\|^2_{L^2(\Omega)}
 	+ \frac{1}{\mu_f} \int_0^t \bigl(K(\nabla p-\rho_f\mathbf{g}), \nabla p\bigr) \mathrm{ds}\nonumber\\
 	&\leq\tilde{C}\|\varepsilon(\mathbf{u}(0))\|^2_{L^2(\Omega)}+\kappa_3\|\xi(0)\|^2_{L^2(\Omega)}+\frac{\kappa_2}{2}\|\eta(0)\|^2_{L^2(\Omega)}
 	\nonumber\\
 	&\quad+\frac{1}{2\epsilon}\int_{0}^{t}\|\kappa_1\eta\|^2_{L^2(\Omega)}\mathrm{ds}
 	+\frac{\epsilon}{2}\int_{0}^{t}\|\xi_t\|^2_{L^2(\Omega)}\mathrm{ds}
 	+\int_0^t (\phi, p) \mathrm{ds}
 	+\int_0^t \langle \phi_1, p \rangle \mathrm{ds}\nonumber\\
 	&\quad+(\mathbf{f}(t),\mathbf{u}(t))
 	+\langle \mathbf{f}_{1}(t),\mathbf{u}(t)\rangle
 	-(\mathbf{f}(0),\mathbf{u}(0))
 	-\langle \mathbf{f}_{1}(0),\mathbf{u}(0)\rangle\nonumber\\
 	&\leq\tilde{C}\|\varepsilon(\mathbf{u}(0))\|^2_{L^2(\Omega)}+\kappa_3\|\xi(0)\|^2_{L^2(\Omega)}+\frac{\kappa_2}{2}\|\eta(0)\|^2_{L^2(\Omega)}
 	\nonumber\\
 	&\quad+\frac{1}{2\epsilon}\int_{0}^{t}\|\kappa_1\eta\|^2_{L^2(\Omega)}\mathrm{ds}
 	+\frac{\epsilon}{2}\int_{0}^{t}\|\xi_t\|^2_{L^2(\Omega)}\mathrm{ds}
 	+\frac{1}{2\epsilon_1}\int_0^t \|\phi\|^2_{L^2(\Omega)} \mathrm{ds}
 	\nonumber\\
 	&\quad
 	+\frac{\epsilon_1}{2}\int_0^t \|p\|^2_{L^2(\Omega)} \mathrm{ds}
 	+\frac{1}{2\epsilon_2}\int_{0}^{t}\|\phi_1\|^2_{L^2(\partial\Omega)}\mathrm{ds}
 	+\frac{\epsilon_2}{2}\int_{0}^{t}\|p\|^2_{L^2(\partial\Omega)}
     \mathrm{ds}\nonumber\\
 	&\quad+\frac{1}{2\epsilon_3}\|\mathbf{f}(t)\|^2_{L^2(\Omega)}
 	+\frac{\epsilon_3}{2}\|\mathbf{u}(t)\|^2_{L^2(\Omega)}
 	+\frac{1}{2\epsilon_4}\|\mathbf{f}_1(t)\|^2_{L^2(\partial\Omega)}
 	+\frac{\epsilon_4}{2}\|\mathbf{u}(t)\|^2_{L^2(\partial\Omega)}\nonumber\\
 	&\quad+\frac{1}{2}\|\mathbf{f}(0)\|^2_{L^2(\Omega)}
 	+\frac{1}{2}\|\mathbf{u}(0)\|^2_{L^2(\Omega)}
	+\frac{1}{2}\|\mathbf{f}_1(0)\|^2_{L^2(\partial\Omega)}
	+\frac{1}{2}\|\mathbf{u}(0)\|^2_{L^2(\partial\Omega)}.
 \end{align}
 Taking $\epsilon,\epsilon_1,\epsilon_2,\epsilon_3,\epsilon_4$ small enough, and using Gronwall inequality in \eqref{11-27-6}, we can get \eqref{11-27-8}. The proof is complete.
\end{proof}

With the help of the Lemma \ref{11-28-4}, we can derive the solvability of the problem \eqref{2.24}-\eqref{2.26}.
\begin{theorem}\label{123-3}
		Let $\mathbf{u}_0\in\mathbf{H}^1(\Omega), \mathbf{f}\in\mathbf{L}^2(\Omega),
		\mathbf{f}_1\in \mathbf{L}^2(\partial\Omega), p_0\in L^2(\Omega), \phi\in L^2(\Omega)$, and $\phi_1\in L^2(\partial\Omega)$. Suppose $c_0>0$ and $(\mathbf{f},\mathbf{v})+\langle \mathbf{f}_1, \mathbf{v} \rangle =0$ for any $\mathbf{v}\in \mathbf{RM}$. Then there exists a unique weak solution to the problem \eqref{2.24}-\eqref{2.26}. 
\end{theorem}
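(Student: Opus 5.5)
We prove existence by a Faedo--Galerkin (or Rothe) construction, with a priori bounds from Lemma \ref{11-28-4} and passage to the limit by monotonicity (Minty's trick). Uniqueness follows from an energy argument applied to the difference of two solutions. We fix finite-dimensional subspaces $\mathbf{V}_m\subset\mathbf{H}^1_\perp(\Omega)$ and $W_m\subset H^1(\Omega)$, and seek $(\mathbf{u}_m,\xi_m,\eta_m)$ satisfying \eqref{2.24}--\eqref{2.26} tested on these spaces. Here \eqref{2.25} is imposed on $W_m$ (projected), and the initial data are the projections of $\mathbf{u}_0$ and $\eta(0)=c_0p_0+\alpha\operatorname{div}\mathbf{u}_0$.

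For the discrete system, we eliminate $\xi_m$ through \eqref{2.25}, which is possible since $\kappa_3>0$ when $c_0>0$: $\xi_m=\kappa_3^{-1}P_m(\kappa_1\eta_m-\operatorname{div}\mathbf{u}_m)$. For fixed $\eta_m$, \eqref{2.24} then becomes the equation $A(\mathbf{u}_m)=\ell$ with the operator
\[
(A\mathbf{u},\mathbf{v})=(\mathcal N(\varepsilon(\mathbf u)),\varepsilon(\mathbf v))+\kappa_3^{-1}(P_m\operatorname{div}\mathbf u,\operatorname{div}\mathbf v).
\]
By Lemma \ref{11-28-3} and Korn's inequality \eqref{11-25-4} on $\mathbf{H}^1_\perp$, this operator is Lipschitz and strongly monotone. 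Hence $\mathbf{u}_m=\mathbf{u}_m(\eta_m)$ is uniquely determined and depends Lipschitz-continuously on $\eta_m$. The compatibility condition $(\mathbf f,\mathbf r)+\langle\mathbf f_1,\mathbf r\rangle=0$ is exactly what makes \eqref{2.24} consistent when tested on $\mathbf{RM}$. Substituting into \eqref{2.26} gives an ODE system $\dot\eta_m=F(\eta_m)$ with a Lipschitz right-hand side. Carath\'eodory's theorem then provides a local solution, and the energy estimate of Lemma \ref{11-28-4}, repeated verbatim at the Galerkin level, extends it to $[0,T]$. That estimate also gives uniform bounds on $\mathbf u_m$ in $L^\infty(H^1)$, on $\xi_m,\eta_m$ in $L^\infty(L^2)$, and on $p_m=\kappa_1\xi_m+\kappa_2\eta_m$ in $L^2(H^1)$. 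Because $c_0>0$ gives $\kappa_2,\kappa_3>0$, $\eta_m$ and $\xi_m$ are controlled separately. We recover $\xi_m\in L^2(H^1)$ from $\xi_m=(p_m-\kappa_2\eta_m)/\kappa_1$ once $\eta_m\in L^2(H^1)$ is known. To get that, we test \eqref{2.25} against $\eta$-type functions, or alternatively work with the variables $(p,\operatorname{div}\mathbf u)$, in which $\eta$ is a linear combination. A bound on $\partial_t\eta_m$ in $L^2((H^1)')$ follows from \eqref{2.26} by duality.

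The main obstacle is passing to the limit in the nonlinear term. We extract weakly(-$*$) convergent subsequences, with $\mathcal N(\varepsilon(\mathbf u_m))\rightharpoonup\chi$ in $L^2(L^2)$ by \eqref{4.21}. The linear equations pass to the limit directly, and Aubin--Lions gives strong convergence of $\eta_m$ in $L^2((H^1)')$. To identify $\chi=\mathcal N(\varepsilon(\mathbf u))$, we use the Minty argument. Testing the Galerkin equations with the solution itself and using the energy identity gives $\limsup\int_0^T(\mathcal N(\varepsilon(\mathbf u_m)),\varepsilon(\mathbf u_m))\le\int_0^T(\chi,\varepsilon(\mathbf u))$. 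The cross terms $(\xi_m,\operatorname{div}\mathbf u_m)$ are handled exactly as in Lemma \ref{11-27-7}, via the combination \eqref{11-27-1}--\eqref{11-27-3}, and the resulting energy identity passes to the limit thanks to weak lower semicontinuity of the quadratic terms. The monotonicity \eqref{4.22} then yields $\chi=\mathcal N(\varepsilon(\mathbf u))$, and in fact strong convergence of $\varepsilon(\mathbf u_m)$. The initial condition for $\eta$ follows from $\eta\in H^1((H^1)')\cap L^2(H^1)\subset C(L^2)$.

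For uniqueness, we take two solutions and subtract them, so that the data vanish. We test the difference of \eqref{2.24} with $\delta\mathbf u$ and the difference of \eqref{2.25} with $\delta\xi$, and add the results to find $(\mathcal N(\varepsilon\mathbf u^1)-\mathcal N(\varepsilon\mathbf u^2),\varepsilon\delta\mathbf u)+\kappa_3\|\delta\xi\|^2=\kappa_1(\delta\eta,\delta\xi)$. We also test the difference of \eqref{2.26} with $\delta p=\kappa_1\delta\xi+\kappa_2\delta\eta$; to make the $(\delta\eta_t,\delta\xi)$ term telescope, we pair it with the time derivative of the mechanics equations, mimicking Lemma \ref{11-27-7}. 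This produces
\[
\tfrac{d}{dt}\Bigl(\tfrac{\kappa_2}{2}\|\delta\eta\|^2+\tfrac{\kappa_3}{2}\|\delta\xi\|^2\Bigr)+\tilde C_3\tfrac{d}{dt}(\cdot)+\tfrac{K_1}{\mu_f}\|\nabla\delta p\|^2\le C\|\delta\eta\|^2 .
\]
If the time-differentiated nonlinear term causes trouble, we would instead work with $\int_0^t\delta\eta$ (testing \eqref{2.26} integrated in time), which avoids differentiating $\mathcal N$. In either form, Gronwall with zero initial data gives $\delta\eta=0$. Then $\delta\xi=0$ and $\delta\mathbf u=0$ follow from \eqref{2.25} and from \eqref{4.22} together with Korn's inequality on $\mathbf H^1_\perp$.
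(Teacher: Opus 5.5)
\textbf{Comparison with the paper.} Your route is genuinely different from the paper's. The paper gets existence from Schaefer's fixed point theorem, with the details omitted. For uniqueness it tests \eqref{2.26} with $\psi=1$, declares $\eta_1=\eta_2$, and then runs the same static argument you use. But $\psi=1$ only yields $(\eta_1(t)-\eta_2(t),1)=0$, so an energy argument like yours is genuinely needed.

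\textbf{Gap in your uniqueness argument.} Your main computation does not go through. Write $\delta\mathcal N:=\mathcal N(\varepsilon(\mathbf u^1))-\mathcal N(\varepsilon(\mathbf u^2))$. The pairing you describe gives exactly
\[
\frac{d}{dt}\Bigl(\frac{\kappa_2}{2}\|\delta\eta\|^2+\frac{\kappa_3}{2}\|\delta\xi\|^2\Bigr)+\bigl(\delta\mathcal N,\varepsilon(\partial_t\delta\mathbf u)\bigr)+\frac{1}{\mu_f}\bigl(K\nabla\delta p,\nabla\delta p\bigr)=0 .
\]
For nonlinear $\mathcal N$, the middle term is not the time derivative of a nonnegative functional of $\delta\mathbf u$.
\begin{itemize}
\item For a single solution, $(\mathcal N(\varepsilon(\mathbf u)),\varepsilon(\mathbf u_t))=\frac{d}{dt}\int_\Omega\hat\Psi(\varepsilon(\mathbf u))$, where $\hat\Psi(\tau)=\Psi(\tau)-\frac{1}{2\lambda}\mathrm{tr}^2(\tau)$. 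For differences, however, remainders involving $\varepsilon(\mathbf u^2_t)$ are left over.
\item A weak solution has no $\mathbf u_t\in\mathbf H^1$ in any case.
\end{itemize}
Your placeholder $\tilde C_3\frac{d}{dt}(\cdot)$ hides exactly this. Lemma \ref{11-27-7} is no template either, since it rests on \eqref{11-26-5}, whose $\tilde C$ depends on $\mathbf u(t)$.

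\textbf{How to fix it.} Your fallback works once the test function is chosen correctly. Because $\delta\eta(0)=0$, the time-integrated difference of \eqref{2.26} reads $(\delta\eta(t),\psi)+\frac{1}{\mu_f}(K\nabla Z(t),\nabla\psi)=0$, with $Z(t)=\int_0^t\delta p\,ds$. Take $\psi=\delta p(t)=Z'(t)$ and insert your own static identity $\kappa_1(\delta\eta,\delta\xi)=(\delta\mathcal N,\varepsilon(\delta\mathbf u))+\kappa_3\|\delta\xi\|^2$. This gives
\[
\kappa_2\|\delta\eta\|^2+\tilde C_3\|\varepsilon(\delta\mathbf u)\|^2+\kappa_3\|\delta\xi\|^2+\frac{1}{2\mu_f}\frac{d}{dt}\bigl(K\nabla Z,\nabla Z\bigr)\le 0 .
\]
Integrating in time gives $\delta\eta=\delta\xi=0$ and $\varepsilon(\delta\mathbf u)=0$ at once. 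No Gronwall step and no time derivative of $\mathcal N$ are needed; only $\delta p\in L^2(0,T;H^1(\Omega))$ is used.

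\textbf{Gaps in your existence argument.}

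\emph{The $L^2(H^1)$ bound for $\xi_m$.} This bound (equivalently, for $\eta_m$) cannot be obtained as you suggest. Since $\operatorname{div}\mathbf u=(\eta-c_0p)/\alpha$ and $p_m$ is bounded in $L^2(H^1)$, such a bound is equivalent to bounding $\operatorname{div}\mathbf u_m$ in $L^2(H^1)$. That is $H^2$-type regularity of the displacement. No test of the purely $L^2$ relation \eqref{2.25} produces gradients, and with $\mathbf f_1\in\mathbf L^2(\partial\Omega)$ such regularity is not to be expected. The requirement $\xi\in L^2(0,T;H^1(\Omega))$ in Definition \ref{weak2} is stronger than the equations need, since only $\nabla p$ enters \eqref{2.26}. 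So prove existence with $p=\kappa_1\xi+\kappa_2\eta\in L^2(0,T;H^1(\Omega))$ and say so explicitly. The initial condition then comes from $\eta\in C([0,T];H^1(\Omega)')$ plus weak continuity into $L^2$, not from $C([0,T];L^2)$.

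\emph{The Minty step.} Do not route the cross term through the time-differentiated combination \eqref{11-27-1}--\eqref{11-27-3}. An energy identity for the limit would need a chain rule for $(\chi,\varepsilon(\mathbf u_t))$, which you do not have. Instead, use the static identity obtained from \eqref{2.24} tested with $\mathbf u_m$ and \eqref{2.25} tested with $\xi_m$:
\[
(\mathcal N(\varepsilon(\mathbf u_m)),\varepsilon(\mathbf u_m))+\kappa_3\|\xi_m\|^2+\kappa_2\|\eta_m\|^2=(\eta_m,p_m)+(\mathbf f,\mathbf u_m)+\langle\mathbf f_1,\mathbf u_m\rangle .
\]
Pass to the limit in $\int_0^T(\eta_m,p_m)$ using strong convergence in $L^2(H^1(\Omega)')$ against weak convergence in $L^2(H^1)$. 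Use weak lower semicontinuity for the other two quadratic terms. This yields your $\limsup$ inequality, and then \eqref{4.22} gives strong convergence of $\varepsilon(\mathbf u_m)$ and $\chi=\mathcal N(\varepsilon(\mathbf u))$.

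\emph{The a priori bound.} Do not copy Lemma \ref{11-28-4} verbatim. Redo it at the Galerkin level with the stored energy $\hat\Psi$, for which $(\mathcal N(\varepsilon(\mathbf u_m)),\varepsilon(\partial_t\mathbf u_m))=\frac{d}{dt}\int_\Omega\hat\Psi(\varepsilon(\mathbf u_m))$.
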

\begin{proof}
		Firstly, using Lemma \ref{11-28-3}, Lemma \ref{11-28-4} and the Schaefer's fixed point theorem  \cite{Evans2016}, we can prove the existence of a weak solution of the problem \eqref{2.24}-\eqref{2.26}. Here we omit the more details.
		
		Secondly, we prove that the problem \eqref{2.24}-\eqref{2.26} has a unique solution.
		We assume that $(\mathbf{u}_1,\xi_1,\eta_1)$ and $(\mathbf{u}_2,\xi_2,\eta_2)$ are the different solutions of \eqref{2.24}-\eqref{2.26} at the moment $t$. 
		Setting $\psi=1$ in \eqref{2.26} and integrating form $0$ to $t$, we get
		\begin{align}
			\eta(t)= \eta(0)+\int_{0}^{t}[(\phi,1)+ \langle \phi_1,1 \rangle]\mathrm{ds}.
		\end{align}
		It is easy to know $\eta_1(t)=\eta_2(t)$.
		Using \eqref{2.24}-\eqref{2.25}, we can get
		\begin{align}
			\bigl(\mathcal{N}(\varepsilon(\mathbf{u}_1))-\mathcal{N}(\varepsilon(\mathbf{u}_2)), \varepsilon(\mathbf{v}) \bigr)-\bigl( \xi_1-\xi_2, \operatorname{div} \mathbf{v} \bigr)
			= 0
			&\qquad\forall \mathbf{v}\in \mathbf{H}^1(\Omega),\label{11-28-5}\\
			\kappa_3 \bigl( \xi_1-\xi_2, \varphi \bigr) +\bigl(\operatorname{div}\mathbf{u}_1-\operatorname{div}\mathbf{u}_2, \varphi \bigr)
			= 0 &\qquad\forall \varphi \in L^2(\Omega).\label{11-28-6}
		\end{align}
		Setting $\mathbf{v}=\mathbf{u}_1-\mathbf{u}_2$ in \eqref{11-28-5}, $\varphi=\xi_1-\xi_2$ in \eqref{11-28-6} and adding the result, we get
		\begin{align}
			\bigl(\mathcal{N}(\varepsilon(\mathbf{u}_1))-\mathcal{N}(\varepsilon(\mathbf{u}_2)), \varepsilon(\mathbf{u}_1)-\varepsilon(\mathbf{u}_2) \bigr)+\kappa_3\|\xi_1-\xi_2\|^2_{L^2(\Omega)}=0.
		\end{align}
		Using \eqref{4.22}, we obtain
		$\xi_1=\xi_2$ and $\mathbf{u}_1=\mathbf{u}_2+\mathbf{c}$, where $\mathbf{c}$ is an arbitrary constant.
		Since $\mathbf{u}_1,\mathbf{u}_2\in \mathbf{H}_\perp^1(\Omega)$, we get $\mathbf{c}=\mathbf{0}$. The proof is complete.		 
\end{proof}

\section{Fully discrete multiphysics finite element method}\label{2025-9-9-2}
\subsection{Multiphysics finite element method}
Let $\mathcal{T}_h$ be a quasi-uniform triangulation or rectangular partition of $\Omega$ with mesh size $h$, and $\bar{\Omega}=\bigcup_{K\in\mathcal{T}_h}\bar{K}$.
The time interval $(0, T)$ is divided into $N$ equal intervals, denoted by $[t_{n-1},t_n], n=1,2,\cdots, N$ and $\Delta t=\frac{T}{N}$, $\mathrm{then~}t_n=n\Delta t$. 
We define the following space of piecewise polynomials
\begin{align*}
	\mathbf{R}_h &=\bigl\{\mathbf{v}_h\in \mathbf{C}^0(\overline{\Omega});\,
	\mathbf{v}_h|_K\in \mathbf{P}_2(K)~~\forall K\in \mathcal{T}_h \bigr\}, \\
	R_h &=\bigl\{\varphi_h\in C^0(\overline{\Omega});\, \varphi_h|_K\in P_1(K)
	~~\forall K\in \mathcal{T}_h \bigr\},
\end{align*}
where $\mathbf{P}_2(K)$ and $P_1(K)$ are quadratic and linear polynomial spaces on finite element $K$, respectively.
We also introduce the function space
\begin{align}
	L_0^2(\Omega):=\{q\in L^2(\Omega); (q,1)=0\},\quad
	\mathbf{X}_h=\mathbf{R}_h\cap \mathbf{H}^1(\Omega)\quad 
	\mathrm{and}\quad
	M_h = R_h\cap L^2(\Omega).
\end{align}
Moreover, we define
\begin{align}
	\mathbf{V}_h=\{\mathbf{v}_h\in\mathbf{X}_h;~(\mathbf{v}_h,\mathbf{r})=0~~\forall\mathbf{r}\in\mathbf{R}\mathbf{M}\},
\end{align}
it is easy to check that $\mathbf{X}_h=\mathbf{V}_h\bigoplus\mathbf{RM}$.

We adopt $(\mathbf{V}_h,M_h,W_h)$ as the mixed finite element spaces for the variables $(\mathbf{u},\xi,\eta)$.
The finite element approximation space $W_h$ for $\eta$ can be chosen independently, any piecewise polynomial space is acceptable provided that
$M_h \subseteq W_h\subseteq L^2(\Omega)$. 
The most convenient choice is $W_h =M_h$, which
will be adopted in the remainder of this paper.


Note that the spaces $\mathbf{V}_h \times M_h$ is usually referred to as the generalized Taylor-Hood element and satisfied the discrete inf-sup condition \cite{B35,B26},
\begin{align}\label{3.13}
	\|q_h\|_{L^2(\Omega)}\leq \frac{1}{\beta}\sup_{\mathbf{0}\neq\mathbf{v}_h\in \mathbf{V}_h}\frac{(q_h, \div \mathbf{v}_h)}{\|\mathbf{v}_h\|_{H^1(\Omega)}}\qquad\forall q_h\in M_{0h}:=M_h\cap L_0^2(\Omega)
\end{align}
for some constant $\beta>0$.

We recall the following inverse inequality for polynomial function \cite{B28}:
\begin{align}\label{122-8}
	\|v_h\|_{W^{m,s}}\leq Ch^{n-m+\frac{d}{s}-\frac{d}{q}}\|v_h\|_{W^{n,q}}
	\quad
	\forall  v_h\in S_h^r, 0\leq n\leq m \leq1~ and~ 1\leq q\leq s\leq\infty.
\end{align}
\begin{algorithm}\em The fully discrete multiphysics finite element method (MFEM): \label{alg210206-1}
\begin{itemize}
\item[(i)]
Compute $\mathbf{u}^0_h\in \mathbf{V}_h$, $\xi_h^0\in M_h$ and $\eta^0_h\in W_h$ by
\begin{align*}
	\mathbf{u}^0_h &=\mathcal{R}_h\mathbf{u}_0, \quad p^0_h =\mathcal{Q}_hp_0, \quad
	q^0_h =\mathcal{Q}_hq_0 \ (q_0 =\operatorname{div} \mathbf{u}_0), \\
	\eta^0_h &=c_0p^0_h+\alpha q^0_h, \quad
	\xi_h^0 =\alpha p_h^0 -\lambda q_h^0,
\end{align*}
where the definition of $\mathcal{R}_h$ and
$\mathcal{Q}_h$ are in \eqref{4.1} and \eqref{4.6}.

\item[(ii)] For $n=0,1,2, \cdots$,  do the following two steps.
\end{itemize}

 Step 1: Solve for $(\mathbf{u}^{n+1}_h,\xi^{n+1}_h,\eta^{n+1}_h)\in \mathbf{V}_h\times M_h\times W_h$ such that
\begin{align}
	(\mathcal{N}(\varepsilon(\mathbf{u}_{h}^{n+1})), \varepsilon(\mathbf{v}_h) )
	-( \xi^{n+1}_h, \operatorname{div} \mathbf{v}_h )
	=({\mathbf{f}},{\mathbf{v}_{h}})
	+\langle{\mathbf{f}_{1}},{\mathbf{v}_h}\rangle,&\quad\forall{\mathbf{v}_{h}}\in{\mathbf{V}_{h}},
	\label{3.14}\\
	\kappa_{3}(\xi_{h}^{n+1},{\varphi_{h}})
	+(\operatorname{div} \mathbf{u}_{h}^{n+1},{\varphi_{h}})=
	k_{1}(\eta_{h}^{n+\theta},{\varphi_{h}}),&\quad\forall \varphi_{h}\in M_{h},
	\label{3.15}\\
	({d_{t}\eta_{h}^{n+1}},\psi_{h})+\frac{1}{\mu_{f}}({K(\nabla(\kappa_{1}\xi_{h}^{n+1} +\kappa_{2}\eta_{h}^{n+1})-{\rho_{f}}\mathbf{g})},{\nabla\psi_{h}})&\nonumber\\
	=(\phi,\psi_{h})+\langle{\phi_{1}},\psi_{h}\rangle,&\quad\forall \psi_{h}\in M_{h}.
	\label{3.16}
\end{align}

 Step 2: Update $p^{n+1}_h$ by
\begin{align}\label{3.17}
	p^{n+1}_h=\kappa_1\xi^{n+1}_h +\kappa_2\eta^{n+\theta}_h,
\end{align}
where $\theta=0$ or $\theta=1$.
\end{algorithm}

\begin{remark}
	When $\theta=1$, Algorithm $\mathrm{\ref{alg210206-1}}$ is coupled, when $\theta=0$, Algorithm $\mathrm{\ref{alg210206-1}}$ is decouple.
	For the nonlinear terms $(\mathcal{N}(\varepsilon(\mathbf{u}_{h}^{n+1})), \varepsilon(\mathbf{v}_h) )$ in \eqref{3.14}-\eqref{3.15}, we can employ the Newton iterative method, Picard iteration method or the implicit-explicit method.
\end{remark}

\subsection{Stability analysis}
The primary goal of this subsection is to derive a discrete energy law which mimics the Lemma \ref{11-27-7} and Lemma \ref{11-28-4}.
\begin{lemma}\label{122-11}
	Let $\{(\mathbf{u}_h^n,\xi_h^n,\eta_h^n) \}_{n\geq 0}$ be defined by the $MFEM$, then there holds the following inequality:
	\begin{align}
		J_h^{l+1}+S_h^{l+1}=J_h^{0} \quad  \text{for}\quad l\geq 0,
	\end{align}
	where 
	\begin{align*}
		J_h^{l+1}&=\tilde{C}\|\varepsilon(\mathbf{u}_h^{l+1})\|^2_{L^2(\Omega)}
		+\frac{\kappa_3}{2}\|\xi_h^{l+1}\|^2_{L^2(\Omega)}
		+\frac{\kappa_2}{2}\|\eta_h^{l+\theta}\|^2_{L^2(\Omega)}
		-({\mathbf{f}}(t_{l+1}),{\mathbf{u}^{l+1}_{h}})
		-\langle{\mathbf{f}_{1}}(t_{l+1}),{\mathbf{u}^{l+1}_h}\rangle\\
		S_h^{l+1}&=\Delta t\sum_{n=0}^{l}[
		\frac{\kappa_3\Delta t}{2}\|d_t\xi_h^{n+1}\|^2_{L^2(\Omega)}
		+\frac{\kappa_2\Delta t}{2}\|d_t\eta_h^{n+\theta}\|^2_{L^2(\Omega)}
		-( d_t\xi^{n+1}_h, \operatorname{div} \mathbf{u}^{n}_h )
		]\\
		&+\Delta t\sum_{n=0}^{l}\frac{1}{\mu_{f}}({K(\nabla p_h^{n+1}-{\rho_{f}}\mathbf{g})},{\nabla p_h^{n+1}})
		-\Delta t\sum_{n=0}^{l}
		(\phi,p_h^{n+1})+\langle{\phi_{1}},p_h^{n+1}\rangle\\
		&-(1-\theta)\Delta t\sum_{n=0}^{l}
		\frac{\kappa_1\Delta t}{\mu_{f}}({K d_t\nabla\xi_{h}^{n+1}},{\nabla p_h^{n+1}})
		.
	\end{align*}
\end{lemma}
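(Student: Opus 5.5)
We mimic the continuous energy argument of Lemma~\ref{11-27-7} at the discrete level, replacing time derivatives by the backward difference $d_t w^{n+1}=(w^{n+1}-w^n)/\Delta t$. We use the polarization identity
\begin{align*}
(d_t w^{n+1},w^{n+1})=\frac12 d_t\|w^{n+1}\|^2_{L^2(\Omega)}+\frac{\Delta t}{2}\|d_t w^{n+1}\|^2_{L^2(\Omega)},
\end{align*}
which produces the numerical dissipation terms $\frac{\kappa_3\Delta t}{2}\|d_t\xi_h^{n+1}\|^2$ and $\frac{\kappa_2\Delta t}{2}\|d_t\eta_h^{n+\theta}\|^2$ appearing in $S_h^{l+1}$.

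\textbf{Step 1 (momentum equation).} Take $\mathbf{v}_h=d_t\mathbf{u}_h^{n+1}$ in \eqref{3.14}. Using the identity \eqref{11-26-5}, $(\mathcal{N}(\varepsilon(\mathbf{u}_h^{n+1})),\varepsilon(\mathbf{u}_h^{n+1}))=\tilde C\|\varepsilon(\mathbf{u}_h^{n+1})\|^2$, the nonlinear term is treated as producing the discrete derivative $\tilde C\, d_t\|\varepsilon(\mathbf{u}_h^{n+1})\|^2$. This mirrors the continuous proof, where equations \eqref{11-27-1} and \eqref{11-27-4} were combined to generate $\frac{d}{dt}(\mathcal N,\varepsilon)$.

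The pressure-type term $-(\xi_h^{n+1},\operatorname{div} d_t\mathbf{u}_h^{n+1})$ is rewritten by a discrete product rule:
\begin{align*}
d_t(\xi_h^{n+1},\operatorname{div}\mathbf{u}_h^{n+1})=(\xi_h^{n+1},\operatorname{div} d_t\mathbf{u}_h^{n+1})+(d_t\xi_h^{n+1},\operatorname{div}\mathbf{u}_h^{n}).
\end{align*}
This splitting is what produces the term $-(d_t\xi_h^{n+1},\operatorname{div}\mathbf{u}_h^n)$ in $S_h^{l+1}$.

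\textbf{Step 2 (constraint equation).} Apply $d_t$ to \eqref{3.15} and test with $\varphi_h=\xi_h^{n+1}$. This gives $\kappa_3(d_t\xi_h^{n+1},\xi_h^{n+1})+(\operatorname{div} d_t\mathbf{u}_h^{n+1},\xi_h^{n+1})=\kappa_1(d_t\eta_h^{n+\theta},\xi_h^{n+1})$. Adding this to Step 1 cancels the cross term $(\xi_h^{n+1},\operatorname{div}d_t\mathbf u_h^{n+1})$.

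\textbf{Step 3 (diffusion equation).} Test \eqref{3.16} with $\psi_h=p_h^{n+1}=\kappa_1\xi_h^{n+1}+\kappa_2\eta_h^{n+\theta}$, which is admissible since $W_h=M_h$.

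For $\theta=1$, the term $(d_t\eta_h^{n+1},\kappa_1\xi_h^{n+1})$ cancels the right-hand side from Step 2. The remaining term $\kappa_2(d_t\eta_h^{n+1},\eta_h^{n+1})$ gives the $\eta$-energy plus its dissipation, and the diffusion term is exactly $(K(\nabla p_h^{n+1}-\rho_f\mathbf g),\nabla p_h^{n+1})$.

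For $\theta=0$, the diffusion term in \eqref{3.16} involves $\kappa_1\xi_h^{n+1}+\kappa_2\eta_h^{n+1}$ rather than $p_h^{n+1}$. We handle this by writing
\begin{align*}
\eta_h^{n+1}=\eta_h^n+\Delta t\, d_t\eta_h^{n+1}
\end{align*}
to express the mismatch, and similarly shift $\xi$. The resulting correction is the extra term $-(1-\theta)\frac{\kappa_1\Delta t}{\mu_f}(K d_t\nabla\xi_h^{n+1},\nabla p_h^{n+1})$. The time-index shifts in $\eta$ and $\xi$ must be tracked carefully so that the $d_t\eta$ pairing still cancels the coupling term from Step 2.

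\textbf{Step 4 (summation).} Multiply by $\Delta t$ and sum over $n=0,\dots,l$; the $d_t$ terms telescope. The load terms are handled by the analogue of \eqref{11-27-4}: $(\mathbf f,d_t\mathbf u_h^{n+1})$ is summed by parts to yield $-(\mathbf f(t_{l+1}),\mathbf u_h^{l+1})$ and its initial counterpart in $J_h$. With time-independent data this is exact; otherwise we keep the $d_t\mathbf f$ remainder in $S_h$.

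\textbf{Main obstacle.} The nonlinear term does not in general satisfy an exact identity $(\mathcal N(\varepsilon(\mathbf u_h^{n+1})),\varepsilon(d_t\mathbf u_h^{n+1}))=\tilde C\,d_t\|\varepsilon(\mathbf u_h^{n+1})\|^2+\dots$, because $\tilde C$ from \eqref{11-26-5} depends on $\mathbf u$. We will follow the paper's convention and treat $\tilde C$ as a fixed constant, so that $(\mathcal N(\varepsilon(\mathbf u_h^{n+1})),\varepsilon(\mathbf u_h^{n+1}-\mathbf u_h^n))$ is handled like the linear case. Any dissipation $\frac{\tilde C\Delta t}{2}\|d_t\varepsilon(\mathbf u_h^{n+1})\|^2$ that this produces is absorbed into, or dropped from, the energy balance. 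This is the step requiring the most care to make rigorous.
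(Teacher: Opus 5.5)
\textbf{There is a genuine gap, in Step 1.} Testing \eqref{3.14} only with $\mathbf v_h=d_t\mathbf u_h^{n+1}$ cannot give the stated identity. Take the most favourable case, the linear model $\mathcal N(\varepsilon)=\tilde C\varepsilon$. This test produces $\frac{\tilde C}{2}d_t\|\varepsilon(\mathbf u_h^{n+1})\|^2_{L^2(\Omega)}+\frac{\tilde C\Delta t}{2}\|d_t\varepsilon(\mathbf u_h^{n+1})\|^2_{L^2(\Omega)}$. That is half the coefficient in $J_h$, plus a dissipation term absent from $S_h$. Since the claim is an equality, you cannot ``absorb or drop'' that term.

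Your product-rule treatment of the pressure term does not repair this. The identity $-(\xi_h^{n+1},\operatorname{div}d_t\mathbf u_h^{n+1})=-d_t(\xi_h^{n+1},\operatorname{div}\mathbf u_h^{n+1})+(d_t\xi_h^{n+1},\operatorname{div}\mathbf u_h^{n})$ has three problems:
\begin{itemize}
\item it gives $(d_t\xi_h^{n+1},\operatorname{div}\mathbf u_h^n)$ with the opposite sign to the one in $S_h^{l+1}$;
\item it leaves an endpoint term $-(\xi_h^{l+1},\operatorname{div}\mathbf u_h^{l+1})$ that is not in $J_h^{l+1}$;
\item in Step 2 you cancel that same cross term against the constraint equation, so it is used twice.
\end{itemize}
In addition, summing $(\mathbf f(t_{n+1}),d_t\mathbf u_h^{n+1})$ by parts leaves $d_t\mathbf f$ and $d_t\mathbf f_1$ remainders, which the stated $S_h^{l+1}$ does not contain. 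Done consistently, your Steps 1--3 yield the standard energy identity: factor $\frac12$ on the elastic energy, an extra $\|d_t\varepsilon\|^2$ dissipation, no $(d_t\xi_h,\operatorname{div}\mathbf u_h)$ term, and load remainders. That is a different statement.

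\textbf{The missing idea} is the discrete analogue of \eqref{11-27-4}, which you cite but never use. Apply $d_t$ to \eqref{3.14} and test with $\mathbf v_h=\mathbf u_h^{n}$:
\begin{align*}
(d_t\mathcal N(\varepsilon(\mathbf u_h^{n+1})),\varepsilon(\mathbf u_h^{n}))-(d_t\xi_h^{n+1},\operatorname{div}\mathbf u_h^{n})=(d_t\mathbf f(t_{n+1}),\mathbf u_h^{n})+\langle d_t\mathbf f_1(t_{n+1}),\mathbf u_h^{n}\rangle .
\end{align*}
At $n=0$ this, like the paper, tacitly uses \eqref{3.14} at level $0$. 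Add it to your Step 1 and use the exact rule $d_t(a^{n+1},b^{n+1})=(d_ta^{n+1},b^{n})+(a^{n+1},d_tb^{n+1})$. Then:
\begin{itemize}
\item the nonlinear terms combine into $d_t(\mathcal N(\varepsilon(\mathbf u_h^{n+1})),\varepsilon(\mathbf u_h^{n+1}))$, with no dissipation;
\item the loads combine into $d_t(\mathbf f(t_{n+1}),\mathbf u_h^{n+1})+d_t\langle\mathbf f_1(t_{n+1}),\mathbf u_h^{n+1}\rangle$, with no remainder;
\item the term $-(d_t\xi_h^{n+1},\operatorname{div}\mathbf u_h^n)$ of $S_h^{l+1}$ comes from this new equation with the correct sign;
\item $-(\xi_h^{n+1},\operatorname{div}d_t\mathbf u_h^{n+1})$ stays intact and cancels against Step 2.
\end{itemize}
This also dissolves your ``main obstacle''. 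The telescoping is exact for $(\mathcal N(\varepsilon(\mathbf u_h^{n})),\varepsilon(\mathbf u_h^{n}))$, and \eqref{11-26-5} is used only to rename it as $\tilde C\|\varepsilon(\mathbf u_h^{n})\|^2_{L^2(\Omega)}$. No linearization or monotonicity is needed.

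\textbf{The case $\theta=0$ needs a different index shift.} If you keep \eqref{3.16} at level $n+1$ and shift $\eta$, you get a $\kappa_2\Delta t\,(Kd_t\nabla\eta_h^{n+1},\nabla p_h^{n+1})$ correction. You also get a coupling term $(d_t\eta_h^{n+1},\kappa_1\xi_h^{n+1})$ that does not match $\kappa_1(d_t\eta_h^{n},\xi_h^{n+1})$ from Step 2. The paper instead:
\begin{itemize}
\item defines $\eta_h^{-1}$ through \eqref{3.15} at level $0$ and lowers \eqref{3.16} to level $n$;
\item tests with $p_h^{n+1}=\kappa_1\xi_h^{n+1}+\kappa_2\eta_h^{n}$;
\item shifts only $\xi$, via $\kappa_1\xi_h^{n}=\kappa_1\xi_h^{n+1}-\kappa_1\Delta t\,d_t\xi_h^{n+1}$.
\end{itemize}
This produces exactly the stated $\kappa_1$-correction, and the coupling term cancels.
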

\begin{proof}
	When $\theta=0$, based on \eqref{3.15}, we define $\eta^{-1}_h$ by
	\begin{align}
		\kappa_{3}(\xi_{h}^{0},{\varphi_{h}})
		+(\operatorname{div} \mathbf{u}_{h}^{0},{\varphi_{h}})=
		k_{1}(\eta_{h}^{-1},{\varphi_{h}})\quad\forall \varphi_{h}\in M_{h}.
	\end{align}	
	Setting  $\mathbf{v}_h=d_t\mathbf{u}_h^{n+1}$ in \eqref{3.14}, we get
	\begin{align}\label{122-1}
		(\mathcal{N}(\varepsilon(\mathbf{u}_{h}^{n+1})), d_t\varepsilon(\mathbf{u}^{n+1}_h) )
		-( \xi^{n+1}_h, \operatorname{div} d_t\mathbf{u}^{n+1}_h )
		=({\mathbf{f}}(t_{n+1}),{d_t\mathbf{u}^{n+1}_{h}})
		+\langle{\mathbf{f}_{1}}(t_{n+1}),{d_t\mathbf{u}^{n+1}_h}\rangle.
	\end{align}	
	Applying operator $d_t$ and setting  $\mathbf{v}_h=d_t\mathbf{u}_h^{n}$ in \eqref{3.14}, we obtain
	\begin{align}\label{122-2}
		(d_t\mathcal{N}(\varepsilon(\mathbf{u}_{h}^{n+1})), \varepsilon(\mathbf{u}^{n}_h) )
		-( d_t\xi^{n+1}_h, \operatorname{div} \mathbf{u}^{n}_h )
		=(d_t{\mathbf{f}}(t_{n+1}),{\mathbf{u}^{n}_{h}})
		+\langle{d_t\mathbf{f}_{1}}(t_{n+1}),{\mathbf{u}^{n}_h}\rangle.
	\end{align}	
	Applying operator $d_t$ and setting $\varphi_h=\xi_h^{n+1}$ in \eqref{3.15}, we get
	\begin{align}\label{122-3}
		\kappa_{3}(d_t\xi_{h}^{n+1},\xi_h^{n+1})
		+(\operatorname{div}d_t \mathbf{u}_{h}^{n+1},\xi_h^{n+1})=
		k_{1}(d_t\eta_{h}^{n+\theta},\xi_h^{n+1}).
	\end{align}	
	Setting $\psi_h=p_h^{n+1}=\kappa_1\xi_h^{n+1}+\kappa_2\eta_h^{n}$ after lowing the super index from $n+1$ to $n$ on both sides of \eqref{3.16}, we obtain
	\begin{align}\label{122-4}
		({d_{t}\eta_{h}^{n}},\kappa_1\xi_h^{n+1}+\kappa_2\eta_h^{n})+\frac{1}{\mu_{f}}({K(\nabla(\kappa_{1}\xi_{h}^{n} +\kappa_{2}\eta_{h}^{n})-{\rho_{f}}\mathbf{g})},{\nabla p_h^{n+1}})
		=(\phi,p_h^{n+1})+\langle{\phi_{1}},p_h^{n+1}\rangle.
	\end{align}	
	Adding \eqref{122-1}-\eqref{122-4} and using the fact that
	\begin{align}
		d_t(\mathcal{N}(\varepsilon(\mathbf{u}_h^{n+1})),\varepsilon(\mathbf{u}_h^{n+1}))
		=(d_t\mathcal{N}(\varepsilon(\mathbf{u}_h^{n+1})),\varepsilon(\mathbf{u}_h^{n}))
		+(\mathcal{N}(\varepsilon(\mathbf{u}_h^{n+1})),d_t\varepsilon(\mathbf{u}_h^{n+1})),		
	\end{align}
	we get
	\begin{align}
		d_t(\mathcal{N}(\varepsilon(\mathbf{u}_h^{n+1})),\varepsilon(\mathbf{u}_h^{n+1}))
		&+\kappa_{3}(d_t\xi_{h}^{n+1},\xi_h^{n+1})
		+\kappa_2({d_{t}\eta_{h}^{n}},\eta_h^{n})
		-( d_t\xi^{n+1}_h, \operatorname{div} \mathbf{u}^{n}_h )\nonumber\\
		&+\frac{1}{\mu_{f}}({K(\nabla p_h^{n+1}-{\rho_{f}}\mathbf{g})},{\nabla p_h^{n+1}})
		-\frac{\kappa_1\Delta t}{\mu_{f}}({K d_t\nabla\xi_{h}^{n+1}},{\nabla p_h^{n+1}})\nonumber\\
		&=d_t({\mathbf{f}}(t_{n+1}),{\mathbf{u}^{n+1}_{h}})
		+d_t\langle{\mathbf{f}_{1}}(t_{n+1}),{\mathbf{u}^{n+1}_h}\rangle
		+(\phi,p_h^{n+1})+\langle{\phi_{1}},p_h^{n+1}\rangle.
	\end{align}	
	Using \eqref{11-26-5} and the fact that
	\begin{align}
		(d_t\xi_h^{n+1},\xi_h^{n+1})
		=\frac{\Delta t}{2}\|d_t\xi_h^{n+1}\|^2_{L^2(\Omega)}
		+\frac{1}{2}d_t\|\xi_h^{n+1}\|^2_{L^2(\Omega)},
	\end{align}
	we get
	\begin{align}\label{122-5}
		&d_t \tilde{C}\|\varepsilon(\mathbf{u}_h^{n+1})\|^2_{L^2(\Omega)}
		+\frac{\kappa_3\Delta t}{2}\|d_t\xi_h^{n+1}\|^2_{L^2(\Omega)}
		+\frac{\kappa_3}{2}d_t\|\xi_h^{n+1}\|^2_{L^2(\Omega)}
		+\frac{\kappa_2\Delta t}{2}\|d_t\eta_h^{n}\|^2_{L^2(\Omega)}
		+\frac{\kappa_2}{2}d_t\|\eta_h^{n}\|^2_{L^2(\Omega)}\nonumber\\
		&-( d_t\xi^{n+1}_h, \operatorname{div} \mathbf{u}^{n}_h )
		+\frac{1}{\mu_{f}}({K(\nabla p_h^{n+1}-{\rho_{f}}\mathbf{g})},{\nabla p_h^{n+1}})
		-\frac{\kappa_1\Delta t}{\mu_{f}}({K d_t\nabla\xi_{h}^{n+1}},{\nabla p_h^{n+1}})\nonumber\\
		&=d_t({\mathbf{f}}(t_{n+1}),{\mathbf{u}^{n+1}_{h}})
		+d_t\langle{\mathbf{f}_{1}}(t_{n+1}),{\mathbf{u}^{n+1}_h}\rangle
		+(\phi,p_h^{n+1})+\langle{\phi_{1}},p_h^{n+1}\rangle.
	\end{align}	
	Applying the summation operator $\Delta t\sum_{n=0}^{l}$ to both side of \eqref{122-5}, we yield
	\begin{align}\label{122-6}
		&\tilde{C}\|\varepsilon(\mathbf{u}_h^{l+1})\|^2_{L^2(\Omega)}
		+\frac{\kappa_3}{2}\|\xi_h^{l+1}\|^2_{L^2(\Omega)}
		+\frac{\kappa_2}{2}d_t\|\eta_h^{l}\|^2_{L^2(\Omega)}
		+\Delta t\sum_{n=0}^{l}\frac{1}{\mu_{f}}({K(\nabla p_h^{n+1}-{\rho_{f}}\mathbf{g})},{\nabla p_h^{n+1}})
		\nonumber\\
		&+\Delta t\sum_{n=0}^{l}[
		\frac{\kappa_3\Delta t}{2}\|d_t\xi_h^{n+1}\|^2_{L^2(\Omega)}
		+\frac{\kappa_2\Delta t}{2}\|d_t\eta_h^{n}\|^2_{L^2(\Omega)}
		-( d_t\xi^{n+1}_h, \operatorname{div} \mathbf{u}^{n}_h )
		-\frac{\kappa_1\Delta t}{\mu_{f}}({K d_t\nabla\xi_{h}^{n+1}},{\nabla p_h^{n+1}})
		]\nonumber\\
		&=\tilde{C}\|\varepsilon(\mathbf{u}_h^{0})\|^2_{L^2(\Omega)}
		+\frac{\kappa_3}{2}\|\xi_h^{0}\|^2_{L^2(\Omega)}
		+\frac{\kappa_2}{2}d_t\|\eta_h^{-1}\|^2_{L^2(\Omega)}+(\phi,p_h^{n+1})+\langle{\phi_{1}},p_h^{n+1}\rangle\nonumber\\		
		&+({\mathbf{f}}(t_{l+1}),{\mathbf{u}^{l+1}_{h}})
		+({\mathbf{f}}(t_{0}),{\mathbf{u}^{0}_{h}})
		+\langle{\mathbf{f}_{1}}(t_{l+1}),{\mathbf{u}^{l+1}_h}\rangle
		+\langle{\mathbf{f}_{1}}(t_{0}),{\mathbf{u}^{0}_h}\rangle
		.
	\end{align}	
	When $\theta=1$, similarly, we get
		\begin{align}\label{122-7}
		&\tilde{C}\|\varepsilon(\mathbf{u}_h^{l+1})\|^2_{L^2(\Omega)}
		+\frac{\kappa_3}{2}\|\xi_h^{l+1}\|^2_{L^2(\Omega)}
		+\frac{\kappa_2}{2}d_t\|\eta_h^{l+1}\|^2_{L^2(\Omega)}
		+\Delta t\sum_{n=0}^{l}\frac{1}{\mu_{f}}({K(\nabla p_h^{n+1}-{\rho_{f}}\mathbf{g})},{\nabla p_h^{n+1}})
		\nonumber\\
		&+\Delta t\sum_{n=0}^{l}[
		\frac{\kappa_3\Delta t}{2}\|d_t\xi_h^{n+1}\|^2_{L^2(\Omega)}
		+\frac{\kappa_2\Delta t}{2}\|d_t\eta_h^{n+1}\|^2_{L^2(\Omega)}
		-( d_t\xi^{n+1}_h, \operatorname{div} \mathbf{u}^{n}_h )
		]\nonumber\\
		&=\tilde{C}\|\varepsilon(\mathbf{u}_h^{0})\|^2_{L^2(\Omega)}
		+\frac{\kappa_3}{2}\|\xi_h^{0}\|^2_{L^2(\Omega)}
		+\frac{\kappa_2}{2}d_t\|\eta_h^{0}\|^2_{L^2(\Omega)}+(\phi,p_h^{n+1})+\langle{\phi_{1}},p_h^{n+1}\rangle\nonumber\\		
		&+({\mathbf{f}}(t_{l+1}),{\mathbf{u}^{l+1}_{h}})
		+({\mathbf{f}}(t_{0}),{\mathbf{u}^{0}_{h}})
		+\langle{\mathbf{f}_{1}}(t_{l+1}),{\mathbf{u}^{l+1}_h}\rangle
		+\langle{\mathbf{f}_{1}}(t_{0}),{\mathbf{u}^{0}_h}\rangle
		.
	\end{align}	
	The proof is complete.
\end{proof}
\begin{lemma}
		Let $\{(\mathbf{u}_h^n,\xi_h^n,\eta_h^n) \}_{n\geq 0}$ be defined by the $MFEM$, then there holds the following inequality:
		\begin{align}\label{123-2}
			\tilde{C}\|\varepsilon(\mathbf{u}_h^{l+1})\|^2_{L^2(\Omega)}
			&+\frac{\kappa_3}{2}\|\xi_h^{l+1}\|^2_{L^2(\Omega)}
			+\frac{\kappa_2}{2}\|\eta_h^{l+\theta}\|^2_{L^2(\Omega)}\nonumber\\
			&+\Delta t\sum_{n=0}^{l}\frac{K_1}{2\mu_{f}}
			\|\nabla p_h^{n+1}\|^2_{L^2(\Omega)}
			\leq C(\tilde{N}^2_0+\tilde{M}^2_0) \quad\text{for~} l\geq 0, \theta=0,1
		\end{align}
		provided that $\Delta t=O(h^2)$, where
		\begin{align*}
			\tilde{N}^2_0&=
			\tilde{C}\|\varepsilon(\mathbf{u}_h^{0})\|^2_{L^2(\Omega)}
			+\frac{\kappa_3}{2}\|\xi_h^{0}\|^2_{L^2(\Omega)}
			+\frac{\kappa_2}{2}\|\eta_h^{\theta-1}\|^2_{L^2(\Omega)}
			+\|\mathbf{f}(t_{0})\|^2_{L^2(\Omega)}
			+\|\mathbf{f}_{1}(t_{0})\|^2_{L^2(\partial\Omega)},
			\\
			\tilde{M}^2_0&=\|\mathbf{f}\|^2_{L^{\infty}((0,T);L^2(\Omega))}
			+\|\mathbf{f}_{1}\|^2_{L^{\infty}((0,T);L^2(\Omega))}
			+\|\phi\|^2_{L^{2}((0,T);L^2(\Omega))}
			+\|\phi_{1}\|^2_{L^{2}((0,T);L^2(\Omega))}.
		\end{align*}
\end{lemma}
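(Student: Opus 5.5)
We start from the discrete energy identity of Lemma \ref{122-11}, $J_h^{l+1}+S_h^{l+1}=J_h^0$, and bound every term of $S_h^{l+1}$ and $J_h^{l+1}$ that has no sign. The positive parts are the numerical dissipation $\frac{\kappa_3\Delta t}{2}\|d_t\xi_h^{n+1}\|^2$, $\frac{\kappa_2\Delta t}{2}\|d_t\eta_h^{n+\theta}\|^2$ and the Darcy term. For the Darcy term we use $K_1|\zeta|^2\le K\zeta\cdot\zeta$ together with Young's inequality on the gravity part:
\[
\frac{1}{\mu_f}\bigl(K(\nabla p_h^{n+1}-\rho_f\mathbf{g}),\nabla p_h^{n+1}\bigr)\ \ge\ \frac{3K_1}{4\mu_f}\|\nabla p_h^{n+1}\|^2_{L^2(\Omega)}-C\|\rho_f\mathbf{g}\|^2_{L^2(\Omega)}.
\]
This leaves a margin of $\frac{K_1}{4\mu_f}\|\nabla p_h^{n+1}\|^2$ for absorbing the remaining terms, so that the final coefficient $\frac{K_1}{2\mu_f}$ in \eqref{123-2} is reached.

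The load terms are handled next. The terms $(\mathbf{f}(t_{l+1}),\mathbf{u}_h^{l+1})+\langle\mathbf{f}_1(t_{l+1}),\mathbf{u}_h^{l+1}\rangle$ in $J_h^{l+1}$ are treated with Cauchy--Schwarz, the trace inequality, Korn's inequality \eqref{11-25-4} on $\mathbf{V}_h\perp\mathbf{RM}$ and Young's inequality. They are absorbed into $\frac{\tilde C}{2}\|\varepsilon(\mathbf{u}_h^{l+1})\|^2$, leaving $C\tilde M_0^2$; the same treatment at $t_0$ contributes to $\tilde N_0^2$.

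For the source terms $(\phi,p_h^{n+1})+\langle\phi_1,p_h^{n+1}\rangle$, we split $p_h^{n+1}=(p_h^{n+1}-\bar p_h^{n+1})+\bar p_h^{n+1}$, where $\bar p_h^{n+1}$ is the mean. The mean-free part is controlled by the Poincar\'e inequality \eqref{12-12-1} and the trace inequality in terms of $\|\nabla p_h^{n+1}\|$, and is absorbed into the Darcy margin. The mean part is bounded through $p_h=\kappa_1\xi_h+\kappa_2\eta_h$ by $\|\xi_h^{n+1}\|+\|\eta_h^{n+\theta}\|$. Since $\kappa_1,\kappa_3$ stay bounded and nonzero by construction, this gives a term of the form $\Delta t\sum_n(\|\xi_h^{n+1}\|^2+\|\eta_h^{n+\theta}\|^2)$ suitable for Gronwall.

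The main obstacle is the two terms in $S_h^{l+1}$ that carry discrete time derivatives of $\xi_h$. The first is the cross term $-\Delta t\sum_n(d_t\xi_h^{n+1},\operatorname{div}\mathbf{u}_h^n)$. Following the continuous proof of Lemma \ref{11-28-4}, we use \eqref{3.15} to replace $\operatorname{div}\mathbf{u}_h^n$ by the $L^2$-projection onto $M_h$ of $\kappa_1\eta_h^{n-1+\theta}-\kappa_3\xi_h^n$; this replacement is legitimate because $d_t\xi_h^{n+1}\in M_h$. The $-\kappa_3$ part telescopes, since $-\kappa_3(d_t\xi_h^{n+1},\xi_h^n)=-\frac{\kappa_3}{2}d_t\|\xi_h^{n+1}\|^2+\frac{\kappa_3\Delta t}{2}\|d_t\xi_h^{n+1}\|^2$. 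The $\kappa_1$ part is summed by parts in $n$, which converts it into terms containing $\|\xi_h\|\,\|\eta_h\|$ at the endpoints plus $\Delta t\sum(\xi_h,d_t\eta_h)$.

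The second is the $\theta=0$ splitting term $\frac{\kappa_1\Delta t}{\mu_f}(Kd_t\nabla\xi_h^{n+1},\nabla p_h^{n+1})$. Here we apply the inverse inequality \eqref{122-8}, $\|\nabla d_t\xi_h^{n+1}\|\le Ch^{-1}\|d_t\xi_h^{n+1}\|$, and then Young's inequality:
\[
\frac{\kappa_1\Delta t K_2}{\mu_f}\,Ch^{-1}\|d_t\xi_h^{n+1}\|\,\|\nabla p_h^{n+1}\|\ \le\ \frac{K_1}{8\mu_f}\|\nabla p_h^{n+1}\|^2+C\frac{\Delta t^2}{h^2}\|d_t\xi_h^{n+1}\|^2 .
\]
Under $\Delta t=O(h^2)$ the factor $\Delta t^2/h^2$ is at most $C\Delta t$ times a small constant. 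The last term is therefore dominated by the dissipation $\frac{\kappa_3\Delta t}{2}\|d_t\xi_h^{n+1}\|^2$. This is exactly where the mesh condition is used, and we would choose the constant in $\Delta t\le ch^2$ small relative to $\kappa_3/\kappa_1^2$.

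The $\Delta t\sum(\xi_h,d_t\eta_h)$ contribution from the summation by parts is treated the same way. If needed we test \eqref{3.16} with $\psi_h=d_t\eta_h$, which expresses $d_t\eta_h$ through $\nabla p_h$; Young's inequality and $\Delta t=O(h^2)$ then again absorb it into the Darcy and dissipation terms. After all absorptions, a discrete Gronwall inequality in $l$ yields \eqref{123-2} for both $\theta=0$ and $\theta=1$. The bookkeeping of constants in the cross term is what we expect to be the most delicate part.
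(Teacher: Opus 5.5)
There is a genuine gap, and it is at the step you flagged as the crux: your treatment of the cross term removes exactly the dissipation you later use for the $\theta=0$ splitting term. Insert \eqref{3.15} at level $n$ with $\varphi_h=d_t\xi_h^{n+1}$ into $-\Delta t\sum_n(d_t\xi_h^{n+1},\operatorname{div}\mathbf{u}_h^n)$. The $\kappa_3$-part then enters $S_h^{l+1}$ with a \emph{plus} sign:
\[
\kappa_3\Delta t\sum_{n=0}^{l}(d_t\xi_h^{n+1},\xi_h^{n})=\frac{\kappa_3}{2}\|\xi_h^{l+1}\|^2_{L^2(\Omega)}-\frac{\kappa_3}{2}\|\xi_h^{0}\|^2_{L^2(\Omega)}-\Delta t\sum_{n=0}^{l}\frac{\kappa_3\Delta t}{2}\|d_t\xi_h^{n+1}\|^2_{L^2(\Omega)} .
\]
Your identity is correct, but what appears in $S_h^{l+1}$ is its negative. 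The last sum cancels the numerical dissipation $\Delta t\sum_n\frac{\kappa_3\Delta t}{2}\|d_t\xi_h^{n+1}\|^2_{L^2(\Omega)}$ of $S_h^{l+1}$ exactly, so after this step nothing controls $d_t\xi_h$. For $\theta=0$, the inverse inequality and $\Delta t\le ch^2$ then only turn the splitting term into $C\kappa_1^2\|\xi_h^{n+1}-\xi_h^{n}\|^2_{L^2(\Omega)}$ per step, with no factor $\Delta t$, and the Gronwall argument fails. The paper avoids this conflict by never substituting \eqref{3.15}. It bounds the cross term directly as in \eqref{123-1}, leaves the dissipation intact, and spends $\frac{\kappa_3\Delta t}{4}\|d_t\xi_h^{n+1}\|^2_{L^2(\Omega)}$ of it on the splitting term in \eqref{122-9}.

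The leftover $\kappa_1$-part does not close as proposed either. After summation by parts you have the endpoint term $\kappa_1(\eta_h^{l-1+\theta},\xi_h^{l+1})$ and the sum $\kappa_1\Delta t\sum_n(d_t\eta_h^{n-1+\theta},\xi_h^{n})$. Absorbing the endpoint term by Young's inequality into $\kappa_3\|\xi_h^{l+1}\|^2_{L^2(\Omega)}+\frac{\kappa_2}{2}\|\eta_h^{l+\theta}\|^2_{L^2(\Omega)}$ requires $\kappa_1^2\lesssim\kappa_2\kappa_3$, i.e.\ $\lambda\alpha^2\lesssim c_0$. That fails in the large-$\lambda$ regime, and in any case the $\eta$-index is $l-1+\theta$, not $l+\theta$. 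For the sum, Young's inequality against the $\eta$-dissipation leaves $\frac{\kappa_1^2}{\kappa_2}\sum_n\|\xi_h^n\|^2_{L^2(\Omega)}$ with no factor $\Delta t$. Testing \eqref{3.16} with $d_t\eta_h$ and using the inverse inequality instead gives $\|d_t\eta_h\|_{L^2(\Omega)}\lesssim h^{-1}\|\nabla(\kappa_1\xi_h+\kappa_2\eta_h)\|_{L^2(\Omega)}+\cdots$, hence a Gronwall term $Ch^{-2}\Delta t\sum_n\|\xi_h^n\|^2_{L^2(\Omega)}$. Unlike in the splitting term, there is no spare factor $\Delta t$ here for $\Delta t=O(h^2)$ to cancel, so the Gronwall constant grows like $e^{CT/h^2}$.

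Falling back on the paper's \eqref{123-1} does not cure this either. Absorbing $\frac{\epsilon}{2}\Delta t\sum_n\|d_t\xi_h^{n+1}\|^2_{L^2(\Omega)}$ into the $O(\Delta t)$-weighted dissipation forces $\epsilon\lesssim\kappa_3\Delta t$. That strips the factor $\Delta t$ from $\frac{1}{2\epsilon}\Delta t\sum_n\|\varepsilon(\mathbf{u}_h^n)\|^2_{L^2(\Omega)}$ and breaks Gronwall in the same way. The cross term is an artifact of adding the differenced momentum equation \eqref{122-2} in Lemma~\ref{122-11}. Instead, test \eqref{3.14} only with $d_t\mathbf{u}_h^{n+1}$ and use the convexity of the potential of $\mathcal{N}$, which follows from \eqref{4.22}. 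Then $(\xi_h^{n+1},\operatorname{div}d_t\mathbf{u}_h^{n+1})$ still cancels against the differenced \eqref{3.15}, no cross term arises, and the $\xi$-dissipation survives. Your $\theta=0$ absorption then works as written, at the price of norms of $\partial_t\mathbf{f}$ and $\partial_t\mathbf{f}_1$ if the loads depend on time.
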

\begin{proof}
	Using the Cauchy-Schwarz inequality, Young inequality and inverse inequality \eqref{122-8}, we get 
\begin{align}\label{123-1}
	\Delta t\sum_{n=0}^{l}(d_t\xi_h^{n+1},\operatorname{div}\mathbf{u}_h^{n})
	&\leq \Delta t\sum_{n=0}^{l}\|d_t\xi_h^{n+1}\|_{L^{2}(\Omega)}
	\|\operatorname{div}\mathbf{u}_h^{n}\|_{L^{2}(\Omega)}\nonumber\\
	&\leq \Delta t\sum_{n=0}^{l}\frac{\epsilon}{2}
	\|d_t\xi_h^{n+1}\|^2_{L^{2}(\Omega)}
	+\Delta t\sum_{n=0}^{l}\frac{1}{2\epsilon}
	\|\varepsilon(\mathbf{u}_h^{n})\|^2_{L^{2}(\Omega)},
\end{align}
and
\begin{align}\label{122-9}
	\dfrac{\kappa_{1}\Delta t}{\mu_{f}}(Kd_{t}\nabla\xi_{h}^{n+1},\nabla p_{h}^{n+1})
	&\leq \dfrac{K_{2}^{2}\kappa_{1}^{2}}{2K_{1}\mu_{f}}(\Delta t)^2\left\|\nabla d_t\xi_{h}^{n+1} \right\|_{L^{2}(\Omega)}^{2}+\dfrac{K_{1}}{2\mu_{f}}\left\|\nabla p_{h}^{n+1} \right\|_{L^{2}(\Omega)}^{2}\nonumber\\
	&\leq\dfrac{cK_{2}^{2}\kappa_{1}^{2}c_{1}^{2}}{2K_{1}\mu_{f}}\frac{(\Delta t)^2}{h^2}\left\|d_t\xi_{h}^{n+1} \right\|_{L^{2}(\Omega)}^{2}+\dfrac{K_{1}}{2\mu_{f}}\left\|\nabla p_{h}^{n+1} \right\|_{L^{2}(\Omega)}^{2}\nonumber\\
	&\leq\dfrac{\kappa_3\Delta t}{4}\left\|d_t\xi_{h}^{n+1}\right\|_{L^{2}(\Omega)}^{2}
	+\dfrac{K_{1}}{2\mu_{f}}\left\|\nabla p_{h}^{n+1} \right\|_{L^{2}(\Omega)}^{2},
\end{align}
where we assume that
$\Delta t\leq
\frac{K_{1}\mu_{f}\kappa_3}{2cK_{2}^{2}\kappa_{1}^{2}c_{1}^{2}}h^2$.
Using \eqref{123-1}, \eqref{122-9}, taking sufficiently small $\epsilon$, combining with Lemma  \ref{122-11} and applying discrete Gronwall inequality, we can get \eqref{123-2}. The proof is complete.
\end{proof}
\begin{theorem}\label{thm3.4}
	The numerical solution $ \left\lbrace\mathbf{u}_{h}^{n+1},\xi_{h}^{n+1},\eta_{h}^{n+1} \right\rbrace_{n\geq0}  $ of the problem \eqref{3.14}-\eqref{3.16} exists uniquely.
\end{theorem}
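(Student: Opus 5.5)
We argue time step by time step. Fix $n\geq 0$ and assume $(\mathbf{u}_h^{n},\xi_h^{n},\eta_h^{n})$, together with $\eta_h^{n-1}$ when $\theta=0$, are known. Then \eqref{3.14}--\eqref{3.16} is a square nonlinear algebraic system in the finite-dimensional space $\mathbf{V}_h\times M_h\times W_h$. It therefore suffices to prove uniqueness and to obtain an a priori bound, and to turn these into existence via a Brouwer-type argument. The Brouwer step is the Minty--Browder theorem for continuous strongly monotone maps on finite-dimensional spaces, or equivalently Schaefer's fixed point theorem as used in Theorem \ref{123-3}.

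\emph{Case $\theta=0$ (decoupled).} Here $\eta_h^{n}$ is known, so \eqref{3.14}--\eqref{3.15} form a nonlinear generalized Stokes problem for $(\mathbf{u}_h^{n+1},\xi_h^{n+1})$. Use \eqref{3.15} to eliminate $\xi_h^{n+1}=\kappa_3^{-1}\mathcal{P}_h(\kappa_1\eta_h^{n}-\operatorname{div}\mathbf{u}_h^{n+1})$, where $\mathcal{P}_h$ is the $L^2$ projection onto $M_h$; this is valid since $\kappa_3>0$ for $c_0>0$. Then \eqref{3.14} becomes the problem of finding $\mathbf{u}_h^{n+1}\in\mathbf{V}_h$ with $A_h(\mathbf{u}_h^{n+1})=F_h$, where
\[
\langle A_h(\mathbf{w}),\mathbf{v}_h\rangle=(\mathcal{N}(\varepsilon(\mathbf{w})),\varepsilon(\mathbf{v}_h))+\kappa_3^{-1}(\mathcal{P}_h\operatorname{div}\mathbf{w},\operatorname{div}\mathbf{v}_h).
\]
By \eqref{4.21} the map $A_h$ is Lipschitz continuous. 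By \eqref{4.22} and Korn's inequality \eqref{11-25-4} on $\mathbf{V}_h\subset\mathbf{H}^1_\perp(\Omega)$ it is strongly monotone, since the extra term is nonnegative. Minty--Browder then gives a unique $\mathbf{u}_h^{n+1}$, hence a unique $\xi_h^{n+1}$. Finally \eqref{3.16} is a linear system for $\eta_h^{n+1}$ whose matrix $\frac{1}{\Delta t}M+\frac{\kappa_2}{\mu_f}A_K$ (mass plus stiffness) is symmetric positive definite, so $\eta_h^{n+1}$ exists uniquely.

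\emph{Case $\theta=1$ (coupled).} Uniqueness comes first. Take two solutions and denote the differences by $(\mathbf{e}_u,e_\xi,e_\eta)$. Test the difference of \eqref{3.14} with $\mathbf{e}_u$, of \eqref{3.15} with $e_\xi$, and of \eqref{3.16} with $\Delta t\,(\kappa_1e_\xi+\kappa_2e_\eta)$, then add, so that the $\operatorname{div}$ and the $\kappa_1(e_\eta,e_\xi)$ cross terms cancel. This yields
\[
(\mathcal{N}(\varepsilon(\mathbf{u}_1))-\mathcal{N}(\varepsilon(\mathbf{u}_2)),\varepsilon(\mathbf{e}_u))+\kappa_3\|e_\xi\|^2+\kappa_2\|e_\eta\|^2+\frac{\Delta t}{\mu_f}(K\nabla e_p,\nabla e_p)=0,
\]
with $e_p=\kappa_1e_\xi+\kappa_2e_\eta$. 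By \eqref{4.22} and Korn's inequality every term vanishes, so the two solutions coincide.

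For existence in the coupled case we use the same test functions to get the a priori bound of \eqref{123-2} at a single step. We then apply Schaefer's theorem (or Brouwer in the form ``$(G(x),x)\ge0$ on a large sphere implies a zero'') to the map $G$ defined by the residuals of \eqref{3.14}--\eqref{3.16} against the weighted test functions above. The required bound is $(G(x),x)\ge c\|x\|^2-C\|x\|$, which follows from \eqref{4.20}, Korn's inequality, Young's inequality on the data terms, and the coercivity of $K$. Continuity of $G$ follows from \eqref{4.21}.

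\emph{Main obstacle.} The delicate point is the coupled case. The natural test function for \eqref{3.16} must be scaled by $\Delta t$ and chosen as $p$ rather than $\eta$, so that the system becomes a monotone operator. One must check that the cross terms cancel exactly and that the resulting form is coercive on the whole product space uniformly enough to fix a radius. This requires $\kappa_2,\kappa_3>0$, i.e.\ $c_0>0$ as in Theorem \ref{123-3}, and the restriction to $\mathbf{V}_h$, which removes rigid motions, for Korn's inequality.
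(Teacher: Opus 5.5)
Your proof is correct, but it does not follow the paper's route. The paper gives no separate argument for this theorem. It defers to Theorem \ref{123-3}: existence comes from the properties of $\mathcal{N}$ in Lemma \ref{11-28-3}, an a priori energy bound and Schaefer's fixed point theorem. Uniqueness comes from identifying the $\eta$-components with the test function $\psi=1$, then testing the $(\mathbf{u},\xi)$ difference equations with $(\mathbf{u}_1-\mathbf{u}_2,\xi_1-\xi_2)$ and invoking \eqref{4.22}.

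You instead work one time step at a time with finite-dimensional monotone operator theory. For $\theta=0$ you eliminate $\xi_h^{n+1}$ through the $L^2$ projection onto $M_h$ (using $\kappa_3>0$), obtain a Lipschitz, strongly monotone operator on $\mathbf{V}_h$, and finish with an SPD linear solve for $\eta_h^{n+1}$. For $\theta=1$ you test \eqref{3.16} with $\Delta t(\kappa_1\varphi_h+\kappa_2\psi_h)$, which makes the $\operatorname{div}$ and $\kappa_1(\eta,\xi)$ cross terms cancel. The whole coupled residual map on $\mathbf{V}_h\times M_h\times W_h$ is then strongly monotone and Lipschitz. Its zeros are exactly the solutions, because $(\varphi_h,\psi_h)\mapsto(\varphi_h,\kappa_1\varphi_h+\kappa_2\psi_h)$ is invertible when $\kappa_2>0$. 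As you note at the end, Minty--Browder alone then gives existence and uniqueness, so the separate Brouwer step is redundant but harmless.

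Your route buys two things.
\begin{itemize}
\item Being a single-step argument, it needs no mesh condition such as the $\Delta t=O(h^2)$ behind the discrete stability bound \eqref{123-2}.
\item More importantly, it handles $\eta$ correctly in the coupled case. Testing with $\psi=1$, as in the paper's template, only shows that $\eta_1$ and $\eta_2$ have the same mean. So for $\theta=1$ the term $\kappa_1(e_\eta,e_\xi)$ would survive in the $(\mathbf{u},\xi)$ identity. Your choice $\psi_h=\Delta t(\kappa_1e_\xi+\kappa_2e_\eta)$ removes it and yields $e_\eta=0$ from $\kappa_2>0$. For $\theta=0$ the paper's template is fine, since $\eta_h^n$ is common to both candidate solutions.
\end{itemize}
The paper's pointer is shorter and keeps the discrete theory parallel to the continuous one, but it leaves these details unchecked.

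A minor point: for $\theta=0$ the step involves only $\eta_h^n$, so you do not need $\eta_h^{n-1}$.
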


The proof of Theorem \ref{thm3.4} is similar to Theorem \ref{123-3}, here we omit the more details.
\subsection{Error estimates}
The goal of this subsection is to analyze the optimal error estimates for the MFEM. To do that, 
for any $\varphi\in L^2(\Omega)$, we define its $L^2$-projection $\mathcal{Q}_h: L^2\rightarrow W_h$ as
\begin{align}\label{4.1}
	\bigl( \mathcal{Q}_h\varphi, \psi_h  \bigr)=\bigl( \varphi, \psi_h  \bigr) \qquad \forall\psi_h\in W_h.
\end{align}
The projection operator $\mathcal{Q}_h: L^2\rightarrow W_h$ satisfies
that for any $\varphi\in H^s(\Omega) (s\geq1)$ \cite{B28},
\begin{align}\label{4.2}
	\|\mathcal{Q}_h\varphi-\varphi\|_{L^2(\Omega)}+h\| \nabla(\mathcal{Q}_h\varphi
	-\varphi) \|_{L^2(\Omega)}\leq Ch^\ell\|\varphi\|_{H^\ell(\Omega)}, \quad \ell=\min\{2, s\}.
\end{align}
Next, for any $\varphi\in H^1(\Omega)$, we define its elliptic projection $\mathcal{S}_h\varphi$ by
\begin{align}
	\bigl(K\nabla\mathcal{S}_h\varphi, \nabla\varphi_h\bigr) &=\bigl(K\nabla\varphi, \nabla\varphi_h\bigr)
	 \qquad \forall \varphi_h\in W_h,\label{4.3}\\
	\bigl(\mathcal{S}_h\varphi, 1\bigr) &=\bigl(\varphi, 1\bigr).
\end{align}
The projection operator $\mathcal{S}_h: H^1(\Omega)\rightarrow W_h$
satisfies that for any $\varphi\in H^s(\Omega) (s>1)$ \cite{B28},
\begin{align}\label{4.5}
	\|\mathcal{S}_h\varphi-\varphi\|_{L^2(\Omega)}+h\| \nabla(\mathcal{S}_h\varphi-\varphi) \|_{L^2(\Omega)}
	\leq Ch^\ell\|\varphi\|_{H^\ell(\Omega)}, \quad \ell=\min\{2, s\}.
\end{align}
Finally, for any $\mathbf{v}\in \mathbf{H}^1_\perp(\Omega)$, we define its elliptic projection $\mathcal{R}_h\mathbf{v}$ by
\begin{align}\label{4.6}
	\bigl(\varepsilon(\mathcal{R}_h\mathbf{v}), \varepsilon(\mathbf{w}_h)\bigr)
	=\bigl(\varepsilon(\mathbf{v}), \varepsilon(\mathbf{w}_h)\bigr) \quad \mathbf{w}_h\in \mathbf{V}_h,
\end{align}
The projection $\mathcal{R}_h\mathbf{v}$ satisfies that for any
$\mathbf{v}\in \bH^1_\perp(\Omega)\cap \mathbf{H}^s(\Omega) (s>1)$ \cite{B28}, 
\begin{align}\label{4.7}
	\|\mathcal{R}_h\mathbf{v}-\mathbf{v}\|_{L^2(\Omega)}+h\| \nabla(\mathcal{R}_h\mathbf{v}-\mathbf{v}) \|_{L^2(\Omega)}
	\leq Ch^m\|\mathbf{v}\|_{H^m(\Ome)}, m=\min\{3, s\}.
\end{align} 
To derive the error estimates, we introduce the following notations
\begin{align*}	E_{\mathbf{u}}^{n+1}=\mathbf{u}(t_{n+1})-\mathbf{u}_{h}^{n+1},\quad E_{\xi}^{n+1}=\xi(t_{n+1})-\xi_{h}^{n+1},\\
E_{\eta}^{n+1}=\eta(t_{n+1})-\eta_{h}^{n+1},
\quad
E_{p}^{n+1}=p(t_{n+1})-p_{h}^{n+1}.
\end{align*}
It is easy to check out
\begin{align}
	E_{p}^{n+1}=\kappa_{1}E_{\xi}^{n+1}+\kappa_{2}E_{\eta}^{n+1}+(1-\theta)\kappa_2\Delta t d_t\eta_h^{n+1}.
\end{align}
Also, we denote
\begin{align*}
	&E_\mathbf{u}^n =\mathbf{u}(t_n)-\mathcal{R}_h(\mathbf{u}(t_n))+\mathcal{R}_h(\mathbf{u}(t_n))-\mathbf{u}_h^n:=\Lambda_\mathbf{u}^n+\Theta_\mathbf{u}^n, \\
	&E_{\xi}^n =\xi(t_n)-\mathcal{Q}_h(\xi(t_n))+\mathcal{Q}_h(\xi(t_n))-\xi_h^n:=\Lambda_\xi^n+\Theta_\xi^n, \\
	&E_{\xi}^n =\xi(t_n)-\mathcal{S}_h(\xi(t_n))+\mathcal{S}_h(\xi(t_n))-\xi_h^n:=\hat{\Lambda}_\xi^n+\hat{\Theta}_\xi^n, \\
	&E_{\eta}^{n} =\eta(t_n)-\mathcal{Q}_h(\eta(t_n))+\mathcal{Q}_h(\eta(t_n))-\eta_h^n:=\Lambda_\eta^n+\Theta_\eta^n, \\
	&E_{\eta}^{n} =\eta(t_n)-\mathcal{S}_h(\eta(t_n))+\mathcal{S}_h(\eta(t_n))-\eta_h^n:=\hat{\Lambda}_\eta^n+\hat{\Theta}_\eta^n, \\
	&E_{p}^{n} =p(t_n)-\mathcal{Q}_h(p(t_n))+\mathcal{Q}_h(p(t_n))-p_h^n:=\Lambda_p^n+\Theta_p^n,\\
	&E_{p}^{n} =p(t_n)-\mathcal{S}_h(p(t_n))+\mathcal{S}_h(p(t_n))-p_h^n:=\hat{\Lambda}_p^n+\hat{\Theta}_p^n. 
\end{align*}
\begin{lemma}\label{lemma4.1}
	Let $ \left\lbrace (\mathbf{u}_{h}^{n}, \xi_{h}^{n}, \eta_{h}^{n})\right\rbrace_{n\geq0}  $ be generated by the MFEM. Then there holds
	\begin{align}\label{4.9}
	\max_{0\leq n\leq l}\left[ \tilde{C}_3\|\varepsilon(\Theta_{\mathbf{u}}^{n+1})\|^2_{L^2(\Omega)}\right.
	&+\left.\frac{\kappa_3}{2}\|\Theta_{\xi}^{n+1}\|^2_{L^2(\Omega)}
	+\frac{\kappa_2}{2}\|\Theta_{\eta}^{n+\theta}\|^2_{L^2(\Omega)}\right] \nonumber\\
	&+\Delta t\sum_{n=0}^{l}\frac{K_1}{\mu_{f}}\|\nabla\hat{Z}_p^{n+\theta}\|^2_{L^2(\Omega)}
	\leq C_1(T)(\Delta t)^2+C_2(T)h^4,
\end{align}
provided that $\Delta t=O(h^2)$ when $\theta=0$ and $\Delta t>0$ when $\theta=1$, where
	\begin{align*}
		&\hat{Z}_p^{n+\theta}=\kappa_1\Theta_{\xi}^{n+1}+\kappa_2\Theta_{\eta}^{n+\theta}+\Lambda_p^{n+\theta}-\hat{\Lambda}_p^{n+\theta},\\
		&R_h^{n+\theta}=-\frac{1}{\Delta t}\int_{t_{n-1+\theta}}^{t_{n+\theta}}(s-t_n)\eta_{tt}(s)\mathrm{ds},
	\end{align*}
	and
	\begin{align*}
		C_1(T)&=C\left(
		\|\eta_t\|^2_{L^2((0,T);L^2(\Omega))}
		+\|\eta_{tt}\|^2_{L^2((0,T);H^{-1}(\Omega))}\right),\nonumber\\
		C_2(T)&=C\left[
		\|\xi_t\|^2_{L^2((0,T);H^2(\Omega))}
		+\|\eta_t\|^2_{L^2((0,T);H^2(\Omega))}
		+\|\nabla\mathbf{u}\|^2_{L^{2}((0,T);H^2(\Omega))}\right.\nonumber\\
		&+\left.\|\xi\|^2_{L^{\infty}((0,T);H^2(\Omega))}
		+\|\eta\|^2_{L^{\infty}((0,T);H^2(\Omega))}
		+\|\nabla\mathbf{u}\|^2_{L^{\infty}((0,T);H^2(\Omega))}
		+\|\operatorname{div}\mathbf{u}_t\|^2_{L^{2}((0,T);H^2(\Omega))}
		\right].
	\end{align*}
\end{lemma}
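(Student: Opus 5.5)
The plan is to mimic the discrete energy argument of Lemma \ref{122-11}, now applied to the error equations. First evaluate the weak formulation \eqref{2.24}--\eqref{2.26} at $t=t_{n+1}$ and subtract \eqref{3.14}--\eqref{3.16}. For \eqref{2.26} we write $\eta_t(t_{n+1})=d_t\eta(t_{n+1})+R_h^{n+\theta}$, where $R_h^{n+\theta}$ is the Taylor remainder. This gives the error system
\begin{align*}
&(\mathcal{N}(\varepsilon(\mathbf{u}(t_{n+1})))-\mathcal{N}(\varepsilon(\mathbf{u}_h^{n+1})),\varepsilon(\mathbf{v}_h))-(E_\xi^{n+1},\operatorname{div}\mathbf{v}_h)=0,\\
&\kappa_3(E_\xi^{n+1},\varphi_h)+(\operatorname{div}E_{\mathbf{u}}^{n+1},\varphi_h)=\kappa_1(E_\eta^{n+\theta},\varphi_h)+(1-\theta)\kappa_1\Delta t(d_t\eta(t_{n+1}),\varphi_h),\\
&(d_tE_\eta^{n+1},\psi_h)+\frac{1}{\mu_f}(K\nabla E_p^{n+1},\nabla\psi_h)=(R_h^{n+\theta},\psi_h)+\text{(splitting term when }\theta=0).
\end{align*}
We then decompose each error into $\Lambda+\Theta$ using $\mathcal{R}_h,\mathcal{Q}_h,\mathcal{S}_h$. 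The orthogonality of $\mathcal{Q}_h$ removes $\Lambda_\xi$ and $\Lambda_\eta$ from the $L^2$ pairings with discrete test functions. The elliptic projection $\mathcal{S}_h$ removes $\hat\Lambda$ terms from the $K\nabla$ pairing, and this is precisely why the combination $\hat Z_p^{n+\theta}$, which contains $\Lambda_p-\hat\Lambda_p$, appears in the statement. The nonlinear term is split as
\[
\bigl(\mathcal{N}(\varepsilon(\mathbf{u}))-\mathcal{N}(\varepsilon(\mathcal{R}_h\mathbf{u}))\bigr)+\bigl(\mathcal{N}(\varepsilon(\mathcal{R}_h\mathbf{u}))-\mathcal{N}(\varepsilon(\mathbf{u}_h))\bigr).
\]
We do not rely on orthogonality of $\mathcal{R}_h$ for the first piece; instead it is bounded through \eqref{4.21} by $\tilde C_2\|\varepsilon(\Lambda_{\mathbf{u}})\|$, which is $O(h^2)$ by \eqref{4.7} for the $P_2$ displacement space.

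Next we choose test functions as in the stability proof: $\mathbf{v}_h=d_t\Theta_{\mathbf{u}}^{n+1}$ (equivalently, test the differenced first equation), $\varphi_h=\Theta_\xi^{n+1}$ in the $d_t$-differenced second equation, and $\psi_h=\hat Z_p^{n+\theta}$ in the third. Adding the three cancels the coupling terms $(\Theta_\xi,\operatorname{div}d_t\Theta_{\mathbf{u}})$ and $(d_t\Theta_\eta,\kappa_1\Theta_\xi)$. This yields $d_t$ of $\frac{\kappa_3}{2}\|\Theta_\xi\|^2+\frac{\kappa_2}{2}\|\Theta_\eta\|^2$ plus numerical dissipation, together with $\frac{K_1}{\mu_f}\|\nabla\hat Z_p^{n+\theta}\|^2$ from the uniform positivity of $K$.

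For the displacement energy we use the monotonicity \eqref{4.22}, applied to the test $\mathbf{v}_h=\Theta_{\mathbf{u}}^{n+1}$, rather than a time-differenced identity. A cleaner route is to test the first error equation with $\Theta_{\mathbf{u}}^{n+1}$ and the second with $\Theta_\xi^{n+1}$. This gives
\[
\tilde C_3\|\varepsilon(\Theta_{\mathbf{u}}^{n+1})\|^2+\kappa_3\|\Theta_\xi^{n+1}\|^2\le \kappa_1(\Theta_\eta^{n+\theta},\Theta_\xi^{n+1})+C\|\varepsilon(\Lambda_{\mathbf{u}}^{n+1})\|\,\|\varepsilon(\Theta_{\mathbf{u}}^{n+1})\|+\ldots,
\]
which controls $\varepsilon(\Theta_{\mathbf{u}})$ pointwise in time by $\Theta_\eta$ and the projection errors. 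The parabolic part, tested with $\hat Z_p$, then controls $\Theta_\eta$ through a discrete Gronwall argument. Both estimates are combined and the maximum is taken over $n$.

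The remaining right-hand sides are handled by Cauchy--Schwarz and Young inequalities. The remainder $R_h$ gives $C\Delta t^2\|\eta_{tt}\|^2_{L^2(H^{-1})}$, pairing in $H^{-1}$ against $\nabla\hat Z_p$, which is absorbed by the dissipation. The terms $d_t\Lambda_\xi,d_t\Lambda_\eta,d_t\Lambda_{\mathbf{u}}$ give $Ch^4$ times the $H^2$ norms of the time derivatives in $C_2(T)$. The decoupling term $(1-\theta)\kappa_1\Delta t\,d_t\eta$ gives $C\Delta t^2\|\eta_t\|^2$. For $\theta=0$ the extra term $\frac{\kappa_1\Delta t}{\mu_f}(K\nabla d_t\Theta_\xi,\nabla\hat Z_p)$ is handled exactly as in \eqref{122-9}, by the inverse inequality \eqref{122-8} and $\Delta t=O(h^2)$, and absorbed into $\frac{\kappa_3\Delta t}{4}\|d_t\Theta_\xi\|^2$.

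The step I expect to be the main obstacle is the nonlinear term. We cannot form $d_t$ of an energy exactly, and the paper's device \eqref{11-26-5} is only a pointwise equality with a solution-dependent constant. So I would avoid \eqref{11-26-5} entirely and keep the $\mathbf{u}$-estimate static via \eqref{4.22} and \eqref{4.21}, testing with $\Theta_{\mathbf{u}}^{n+1}$. The time-differenced structure would be used only for the $(\xi,\eta)$ parabolic energy, where the linear coupling $(\operatorname{div}\Theta_{\mathbf{u}},\cdot)$ is replaced using the second error equation, $\operatorname{div}\Theta_{\mathbf{u}}=\kappa_1\Theta_\eta-\kappa_3\Theta_\xi+\text{projection terms}$ (in the $M_h$-projected sense). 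Then Gronwall closes the estimate with constants $C_1(T),C_2(T)$ as stated.
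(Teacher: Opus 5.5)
Your proposal has a genuine gap in the parabolic step. Testing \eqref{4.12} with $\hat{Z}_p^{n+\theta}$, the test that yields the coercive term $\frac{1}{\mu_f}(K\nabla\hat{Z}_p^{n+\theta},\nabla\hat{Z}_p^{n+\theta})$, necessarily creates the cross term $\kappa_1(d_t\Theta_\eta^{n+\theta},\Theta_\xi^{n+1})$. This is not a discrete time derivative, and Young's inequality cannot absorb it. The only control you have on $d_t\Theta_\eta$ is the dissipation $\frac{\kappa_2\Delta t}{2}\|d_t\Theta_\eta^{n+\theta}\|^2_{L^2(\Omega)}$, which carries an extra factor $\Delta t$.

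Your static estimate (testing with $\Theta_{\mathbf{u}}^{n+1}$ and $\Theta_\xi^{n+1}$) is correct. However, it bounds $\varepsilon(\Theta_{\mathbf{u}}^{n+1})$ and $\Theta_\xi^{n+1}$ only level by level and says nothing about their time differences, so it does not help with this term. Replacing $\operatorname{div}\Theta_{\mathbf{u}}$ through the second error equation is circular. The differenced equation \eqref{4.11} is a single relation among $d_t\operatorname{div}\Theta_{\mathbf{u}}^{n+1}$, $d_t\Theta_\xi^{n+1}$ and $d_t\Theta_\eta^{n+\theta}$. Tested with $\Theta_\xi^{n+1}$, it trades $\kappa_1(d_t\Theta_\eta^{n+\theta},\Theta_\xi^{n+1})$ for $\kappa_3(d_t\Theta_\xi^{n+1},\Theta_\xi^{n+1})+(d_t\operatorname{div}\Theta_{\mathbf{u}}^{n+1},\Theta_\xi^{n+1})$ plus projection terms. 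Substituting it once more gives the original term back.

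The only equation that can eliminate $(d_t\operatorname{div}\Theta_{\mathbf{u}}^{n+1},\Theta_\xi^{n+1})$ is \eqref{4.10} tested with $d_t\Theta_{\mathbf{u}}^{n+1}$. That test reintroduces exactly the pairing $(\mathcal{N}(\varepsilon(\mathcal{R}_h\mathbf{u}(t_{n+1})))-\mathcal{N}(\varepsilon(\mathbf{u}_h^{n+1})),d_t\varepsilon(\Theta_{\mathbf{u}}^{n+1}))$ that you decided to avoid. Your third paragraph relies on the cancellation of $(\Theta_\xi^{n+1},\operatorname{div}d_t\Theta_{\mathbf{u}}^{n+1})$, so it is incompatible with your fourth and final paragraphs.

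The paper's way around this is the discrete product rule \eqref{4.15}. It adds \eqref{4.10} tested with $d_t\Theta_{\mathbf{u}}^{n+1}$ to the $d_t$-differenced \eqref{4.10} tested with $\Theta_{\mathbf{u}}^{n}$. The nonlinear terms then form the exact difference $d_t(\mathcal{N}(\varepsilon(\mathcal{R}_h\mathbf{u}(t_{n+1})))-\mathcal{N}(\varepsilon(\mathbf{u}_h^{n+1})),\varepsilon(\Theta_{\mathbf{u}}^{n+1}))$, so no energy for $\mathcal{N}$ is needed. After summation, \eqref{4.22} is applied only at level $l+1$ to give $\tilde{C}_3\|\varepsilon(\Theta_{\mathbf{u}}^{l+1})\|^2_{L^2(\Omega)}$, and $(\Theta_\xi^{n+1},d_t\operatorname{div}\Theta_{\mathbf{u}}^{n+1})$ cancels against \eqref{4.17}.

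If you adopt this device, note that \eqref{4.13}$+$\eqref{4.14} is just the time difference of your static identity, obtained by testing \eqref{4.10} with $\Theta_{\mathbf{u}}^{n+1}$. The difficulty is therefore relocated rather than removed. It reappears as $(d_t\Theta_\xi^{n+1},\operatorname{div}\Theta_{\mathbf{u}}^{n})$ in $\Phi_6$. In \eqref{4.29} this term is split with a weight $\epsilon_1$, but on the left of \eqref{4.24} $d_t\Theta_\xi$ is controlled only with weight $\frac{\kappa_3\Delta t}{2}$. Absorption forces $\epsilon_1=O(\Delta t)$. The companion term $\frac{\Delta t}{2\epsilon_1}\|\varepsilon(\Theta_{\mathbf{u}}^{n})\|^2_{L^2(\Omega)}$ then has a coefficient of order one per step rather than order $\Delta t$, and the discrete Gronwall constant grows like $(1+c)^N$.

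So the obstacle you identified is real. Closing it requires an additional ingredient that controls time differences of the nonlinear term. One possibility is a relative-energy argument based on the fact that $\mathcal{N}=\partial_{\varepsilon}(\Psi-\frac{1}{2\lambda}\mathrm{tr}^2)$ is the gradient of a uniformly convex potential. That would require more smoothness of $\Phi$ and $L^\infty$ bounds on $\varepsilon(\mathbf{u}_t)$. Neither your static estimate nor the second error equation provides such an ingredient.
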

\begin{proof}
	Subtracting \eqref{3.14} form \eqref{2.24}, \eqref{3.15} form \eqref{2.25}, \eqref{3.16} form \eqref{2.26}, respectively, we get
	\begin{align*}
		(\mathcal{N}(\varepsilon(\mathbf{u}(t_{n+1})))-
		\mathcal{N}(\varepsilon(\mathbf{u}_{h}^{n+1})), \varepsilon(\mathbf{v}_h) )
		-(E_{\xi}^{n+1}, \operatorname{div} \mathbf{v}_h )=0,&\quad\forall{\mathbf{v}_{h}}\in{\mathbf{V}_{h}},	\\
		\kappa_{3}(E_{\xi}^{n+1},{\varphi_{h}})
		+(\operatorname{div}E_{\mathbf{u}}^{n+1},{\varphi_{h}})=
		k_{1}(E_{\eta}^{n+\theta},{\varphi_{h}})
		+(1-\theta)\kappa_1\Delta t(d_t\eta(t_{n+1}),\varphi_h)
		,&\quad\forall \varphi_{h}\in M_{h},\\
		(d_t E_{\eta}^{n+1},\psi_h)+\frac{1}{\mu_{f}}({K(\nabla(\kappa_{1}E_{\xi}^{n+1} +\kappa_{2}E_{\eta}^{n+1}))},{\nabla\psi_{h}})=(R_h^{n+1},\psi_h),&\quad\forall \psi_{h}\in M_{h}.
	\end{align*}	
	Using the definitions of the projection $\mathcal{R}_h$, $\mathcal{Q}_h$ and $\mathcal{S}_h$, we have
	\begin{align}
		(\mathcal{N}(\varepsilon(\mathbf{u}(t_{n+1})))-
		\mathcal{N}(\varepsilon(\mathbf{u}_{h}^{n+1})), \varepsilon(\mathbf{v}_h) )
		-(\Lambda_{\xi}^{n+1}+\Theta_{\xi}^{n+1}, \operatorname{div} \mathbf{v}_h )=0,&\quad\forall{\mathbf{v}_{h}}\in{\mathbf{V}_{h}},\label{4.10}	\\
		\kappa_{3}(\Theta_{\xi}^{n+1},{\varphi_{h}})
		+(\operatorname{div}\Lambda_{\mathbf{u}}^{n+1},{\varphi_{h}})
		+(\operatorname{div}\Theta_{\mathbf{u}}^{n+1},{\varphi_{h}})\qquad\qquad\qquad\qquad~		
		&\nonumber\\
		=k_{1}(\Theta_{\eta}^{n+\theta},{\varphi_{h}})
		+(1-\theta)\kappa_1\Delta t(d_t\eta(t_{n+1}),\varphi_h)
		,&\quad\forall \varphi_{h}\in M_{h},\label{4.11}\\
		(d_t \Theta_{\eta}^{n+1},\psi_h)+\frac{1}{\mu_{f}}({K(\nabla(\kappa_{1}\hat{\Theta}_{\xi}^{n+1} +\kappa_{2}\hat{\Theta}_{\eta}^{n+1}))},{\nabla\psi_{h}})=(R_h^{n+1},\psi_h),&\quad\forall \psi_{h}\in M_{h}.\label{4.12}
	\end{align}	
	Setting $\mathbf{v}_h=\Theta_{\mathbf{u}}^{n}=\mathcal{R}_h\mathbf{u}(t_{n})-\mathbf{u}_h^{n}$ after applying the difference operator $d_t$ to the equation \eqref{4.10}, we get
	\begin{align}\label{4.13}
		&(d_t(\mathcal{N}(\varepsilon(\mathcal{R}_h\mathbf{u}(t_{n+1})))
		-\mathcal{N}(\varepsilon(\mathbf{u}_{h}^{n+1}))), \varepsilon(\mathcal{R}_h\mathbf{u}(t_{n}))-\varepsilon(\mathbf{u}_h^{n}) )
		\nonumber\\
		&+(d_t(\mathcal{N}(\varepsilon(\mathbf{u}(t_{n+1})))-
		\mathcal{N}(\varepsilon(\mathcal{R}_h\mathbf{u}(t_{n+1})))), \varepsilon(\mathcal{R}_h\mathbf{u}(t_{n}))-\varepsilon(\mathbf{u}_h^{n}) )
		\nonumber\\
		&=(d_t\Lambda_{\xi}^{n+1}, \operatorname{div}\Theta_{\mathbf{u}}^{n})
		+(d_t\Theta_{\xi}^{n+1}, \operatorname{div}\Theta_{\mathbf{u}}^{n}).
	\end{align}	
	Setting $\mathbf{v}_h=d_t\Theta_{\mathbf{u}}^{n+1}=d_t(\mathcal{R}_h\mathbf{u}(t_{n+1})-\mathbf{u}_h^{n+1})$ in \eqref{4.10}, we get
	\begin{align}\label{4.14}
		&(\mathcal{N}(\varepsilon(\mathcal{R}_h\mathbf{u}(t_{n+1})))-
		\mathcal{N}(\varepsilon(\mathbf{u}_{h}^{n+1})), d_t(\varepsilon(\mathcal{R}_h\mathbf{u}(t_{n+1}))-\varepsilon(\mathbf{u}_h^{n+1}) ))\nonumber\\
		&+
		(\mathcal{N}(\varepsilon(\mathbf{u}(t_{n+1})))-
		\mathcal{N}(\varepsilon(\mathcal{R}_h\mathbf{u}(t_{n+1}))), d_t(\varepsilon(\mathcal{R}_h\mathbf{u}(t_{n+1}))-\varepsilon(\mathbf{u}_h^{n+1}) ))
		\nonumber\\
		&=(\Lambda_{\xi}^{n+1},d_t \operatorname{div}\Theta_{\mathbf{u}}^{n+1})
		+(\Theta_{\xi}^{n+1},d_t \operatorname{div}\Theta_{\mathbf{u}}^{n+1}).
	\end{align}	
	Adding \eqref{4.13}-\eqref{4.14} and using the fact that
	\begin{align}\label{4.15}
		&d_t(\mathcal{N}(\varepsilon(\mathcal{R}_h\mathbf{u}(t_{n+1})))-\mathcal{N}(\varepsilon(\mathbf{u}_{h}^{n+1})), \varepsilon(\mathcal{R}_h\mathbf{u}(t_{n+1}))-\varepsilon(\mathbf{u}_h^{n+1}) )\nonumber\\
		&=(d_t(\mathcal{N}(\varepsilon(\mathcal{R}_h\mathbf{u}(t_{n+1})))-\mathcal{N}(\varepsilon(\mathbf{u}_{h}^{n+1}))), \varepsilon(\mathcal{R}_h\mathbf{u}(t_{n}))-\varepsilon(\mathbf{u}_h^{n}) )\nonumber\\
		&+(\mathcal{N}(\varepsilon(\mathcal{R}_h\mathbf{u}(t_{n+1})))-\mathcal{N}(\varepsilon(\mathbf{u}_{h}^{n+1})),d_t( \varepsilon(\mathcal{R}_h\mathbf{u}(t_{n+1}))-\varepsilon(\mathbf{u}_h^{n+1}) )),
	\end{align}
	we get
	\begin{align}\label{4.16}
		&d_t(\mathcal{N}(\varepsilon(\mathcal{R}_h\mathbf{u}(t_{n+1})))-\mathcal{N}(\varepsilon(\mathbf{u}_{h}^{n+1})),\varepsilon(\mathcal{R}_h\mathbf{u}(t_{n+1}))-\varepsilon(\mathbf{u}_h^{n+1}) )\nonumber\\
		&=(d_t\Lambda_{\xi}^{n+1}, \operatorname{div}\Theta_{\mathbf{u}}^{n})
		+(d_t\Theta_{\xi}^{n+1}, \operatorname{div}\Theta_{\mathbf{u}}^{n})
		+(\Lambda_{\xi}^{n+1},d_t \operatorname{div}\Theta_{\mathbf{u}}^{n+1})
		+(\Theta_{\xi}^{n+1},d_t \operatorname{div}\Theta_{\mathbf{u}}^{n+1})\nonumber\\
		&-(d_t(\mathcal{N}(\varepsilon(\mathbf{u}(t_{n+1})))-
		\mathcal{N}(\varepsilon(\mathcal{R}_h\mathbf{u}(t_{n+1})))
		), \varepsilon(\mathcal{R}_h\mathbf{u}(t_{n}))-\varepsilon(\mathbf{u}_h^{n}) )\nonumber\\
		&-(\mathcal{N}(\varepsilon(\mathbf{u}(t_{n+1})))-
		\mathcal{N}(\varepsilon(\mathcal{R}_h\mathbf{u}(t_{n+1}))), d_t(\varepsilon(\mathcal{R}_h\mathbf{u}(t_{n+1}))-\varepsilon(\mathbf{u}_h^{n+1}) ))\nonumber\\
		&=(d_t\Lambda_{\xi}^{n+1}, \operatorname{div}\Theta_{\mathbf{u}}^{n})
		+(d_t\Theta_{\xi}^{n+1}, \operatorname{div}\Theta_{\mathbf{u}}^{n})
		+(\Lambda_{\xi}^{n+1},d_t \operatorname{div}\Theta_{\mathbf{u}}^{n+1})
		+(\Theta_{\xi}^{n+1},d_t \operatorname{div}\Theta_{\mathbf{u}}^{n+1})\nonumber\\
		&-d_t(\mathcal{N}(\varepsilon(\mathbf{u}(t_{n+1})))-
		\mathcal{N}(\varepsilon(\mathcal{R}_h\mathbf{u}(t_{n+1}))
		), \varepsilon(\mathcal{R}_h\mathbf{u}(t_{n+1}))-\varepsilon(\mathbf{u}_h^{n+1}) ).
	\end{align}
	Setting $\varphi_h=\Theta_{\xi}^{n+1}$ after applying the difference operator $d_t$ to the equation \eqref{4.11}, we get
	\begin{align}\label{4.17}
		\kappa_{3}(d_t\Theta_{\xi}^{n+1},\Theta_{\xi}^{n+1})
		+(d_t\operatorname{div}\Lambda_{\mathbf{u}}^{n+1},\Theta_{\xi}^{n+1})
		+(d_t\operatorname{div}\Theta_{\mathbf{u}}^{n+1},\Theta_{\xi}^{n+1})\qquad\qquad\qquad\qquad~	
		&\nonumber\\
		=k_{1}(d_t\Theta_{\eta}^{n+\theta},\Theta_{\xi}^{n+1})
		+(1-\theta)\kappa_1 (d_t\eta(t_{n+1})-d_t\eta(t_{n}),\Theta_{\xi}^{n+1}).
	\end{align}
	When $\theta=1$, setting $\psi_h=\hat{Z}_p^{n+1}=\kappa_1\hat{\Theta}_{\xi}^{n+1}+\kappa_2\hat{\Theta}_{\eta}^{n+1}=\kappa_1\Theta_{\xi}^{n+1}+\kappa_2\Theta_{\eta}^{n+1}+\Lambda_p^{n+1}-\hat{\Lambda}_p^{n+1} $ in \eqref{4.12}, we get
	\begin{align}\label{4.18}
		(d_t \Theta_{\eta}^{n+1},\kappa_1\Theta_{\xi}^{n+1}+\kappa_2\Theta_{\eta}^{n+1})
		&+\frac{1}{\mu_{f}}({K
			\nabla\hat{Z}_p^{n+1}},{\nabla
			\hat{Z}_p^{n+1}})\nonumber\\
		&=-(d_t \Theta_{\eta}^{n+1},\Lambda_p^{n+1}-\hat{\Lambda}_p^{n+1})+(R_h^{n+1},\hat{Z}_p^{n+1}).
	\end{align}
	Adding \eqref{4.16}-\eqref{4.18}, we get 
	\begin{align}\label{125-2}
		&d_t(\mathcal{N}(\varepsilon(\mathcal{R}_h\mathbf{u}(t_{n+1})))-\mathcal{N}(\varepsilon(\mathbf{u}_{h}^{n+1})),\varepsilon(\mathcal{R}_h\mathbf{u}(t_{n+1}))-\varepsilon(\mathbf{u}_h^{n+1}) )
		+\frac{\kappa_3}{2}d_t\|\Theta_{\xi}^{n+1}\|^2_{L^2(\Omega)}
		\nonumber\\
		&+\frac{\kappa_3\Delta t}{2}\|d_t\Theta_{\xi}^{n+1}\|^2_{L^2(\Omega)}
		+\frac{\kappa_2}{2}d_t\|\Theta_{\eta}^{n+1}\|^2_{L^2(\Omega)}
		+\frac{\kappa_2\Delta t}{2}\|d_t\Theta_{\eta}^{n+1}\|^2_{L^2(\Omega)}
		+(d_t\operatorname{div}\Lambda_{\mathbf{u}}^{n+1},\Theta_{\xi}^{n+1})\nonumber\\
		&+\frac{1}{\mu_{f}}(K\nabla\hat{Z}_p^{n+1},\nabla\hat{Z}_p^{n+1})+(d_t \Theta_{\eta}^{n+1},\Lambda_p^{n+1}-\hat{\Lambda}_p^{n+1})\nonumber\\
		&=(d_t\Lambda_{\xi}^{n+1}, \operatorname{div}\Theta_{\mathbf{u}}^{n})
		+(d_t\Theta_{\xi}^{n+1}, \operatorname{div}\Theta_{\mathbf{u}}^{n})
		+(\Lambda_{\xi}^{n+1},d_t \operatorname{div}\Theta_{\mathbf{u}}^{n+1})
		+(R_h^{n+1},\hat{Z}_p^{n+1})\nonumber\\
		&-d_t(\mathcal{N}(\varepsilon(\mathbf{u}(t_{n+1})))-
		\mathcal{N}(\varepsilon(\mathcal{R}_h\mathbf{u}(t_{n+1}))
		), \varepsilon(\mathcal{R}_h\mathbf{u}(t_{n+1}))-\varepsilon(\mathbf{u}_h^{n+1}) ).
	\end{align}
	When $\theta=0$, setting $\psi_h=\hat{Z}_p^{n}=\kappa_1\Theta_{\xi}^{n+1}+\kappa_2\Theta_{\eta}^{n}+\Lambda_p^{n}-\hat{\Lambda}_p^{n}$ in \eqref{4.12} after lowing the degree from $n+1$ to $n$, we get
	\begin{align*}
		&(d_t\Theta_{\eta}^{n},\kappa_1\Theta_{\xi}^{n+1}+\kappa_2\Theta_{\eta}^{n})
		+(d_t\Theta_{\eta}^{n+1},\Lambda_p^{n}-\hat{\Lambda}_p^{n})\nonumber\\
		&+\frac{1}{\mu_{f}}({K(\nabla(\kappa_{1}\hat{\Theta}_{\xi}^{n}+\kappa_{2}\Lambda_{\eta}^{n}+\kappa_{2}\Theta_{\eta}^{n}))},{\nabla\hat{Z}_p^{n}})
		=(R_h^{n+1},\hat{Z}_p^{n}).
	\end{align*}
	Using the properties of projections $\mathcal{Q}_h$ and $\mathcal{S}_h$, we get
	\begin{align}\label{125-1}
		&(d_t\Theta_{\eta}^{n},\kappa_1\Theta_{\xi}^{n+1}+\kappa_2\Theta_{\eta}^{n})
		+(d_t\Theta_{\eta}^{n+1},\Lambda_p^{n}-\hat{\Lambda}_p^{n})\nonumber\\
		&+\frac{1}{\mu_{f}}({K\nabla\hat{Z}_p^{n}},{\nabla\hat{Z}_p^{n}})-\frac{\kappa_1\Delta t}{\mu_{f}}({Kd_t\nabla\Theta_{\xi}^{n+1}},{\nabla\hat{Z}_p^{n}})=(R_h^{n+1},\hat{Z}_p^{n}).
	\end{align}
	Adding \eqref{4.16}, \eqref{4.17} and \eqref{125-1}, we get 
	\begin{align}\label{125-3}
		&d_t(\mathcal{N}(\varepsilon(\mathcal{R}_h\mathbf{u}(t_{n+1})))-\mathcal{N}(\varepsilon(\mathbf{u}_{h}^{n+1})),\varepsilon(\mathcal{R}_h\mathbf{u}(t_{n+1}))-\varepsilon(\mathbf{u}_h^{n+1}) )
		+\frac{\kappa_3}{2}d_t\|\Theta_{\xi}^{n+1}\|^2_{L^2(\Omega)}
		\nonumber\\
		&+\frac{\kappa_3\Delta t}{2}\|d_t\Theta_{\xi}^{n+1}\|^2_{L^2(\Omega)}
		+\frac{\kappa_2}{2}d_t\|\Theta_{\eta}^{n}\|^2_{L^2(\Omega)}+\frac{\kappa_2\Delta t}{2}\|d_t\Theta_{\eta}^{n}\|^2_{L^2(\Omega)}
		+(d_t\operatorname{div}\Lambda_{\mathbf{u}}^{n+1},\Theta_{\xi}^{n+1})
		\nonumber\\
		&+\frac{1}{\mu_{f}}({K\nabla \hat{Z}_p^{n}},{\nabla\hat{Z}_p^{n}})
		-\frac{\kappa_1\Delta t}{\mu_{f}}({K
		d_t\nabla\Theta_{\xi}^{n+1}},{\nabla\hat{Z}_p^{n}})+(d_t\Theta_{\eta}^{n},\Lambda_p^{n}-\hat{\Lambda}_p^{n})\nonumber\\
		&=(d_t\Lambda_{\xi}^{n+1}, \operatorname{div}\Theta_{\mathbf{u}}^{n})
		+(d_t\Theta_{\xi}^{n+1}, \operatorname{div}\Theta_{\mathbf{u}}^{n})
		+(\Lambda_{\xi}^{n+1},d_t \operatorname{div}\Theta_{\mathbf{u}}^{n+1})
		+(R_h^{n},\hat{Z}_p^{n})\nonumber\\
		&-d_t(\mathcal{N}(\varepsilon(\mathbf{u}(t_{n+1})))-\mathcal{N}(\varepsilon(\mathcal{R}_h\mathbf{u}(t_{n+1}))),\varepsilon(\mathcal{R}_h\mathbf{u}(t_{n+1}))-\varepsilon(\mathbf{u}_h^{n+1}) )\nonumber\\
		&+\kappa_1(d_t\eta(t_{n+1})-d_t\eta(t_{n}),\Theta_{\xi}^{n+1}).
	\end{align}
	Combining \eqref{125-2} and \eqref{125-3} into one  with $\theta$, we get
	\begin{align}\label{125-4}
		&d_t(\mathcal{N}(\varepsilon(\mathcal{R}_h\mathbf{u}(t_{n+1})))-\mathcal{N}(\varepsilon(\mathbf{u}_{h}^{n+1})),\varepsilon(\mathcal{R}_h\mathbf{u}(t_{n+1}))-\varepsilon(\mathbf{u}_h^{n+1}) )
		+\frac{\kappa_3}{2}d_t\|\Theta_{\xi}^{n+1}\|^2_{L^2(\Omega)}
		\nonumber\\
		&+\frac{\kappa_3\Delta t}{2}\|d_t\Theta_{\xi}^{n+1}\|^2_{L^2(\Omega)}
		+\frac{\kappa_2}{2}d_t\|\Theta_{\eta}^{n+\theta}\|^2_{L^2(\Omega)}+\frac{\kappa_2\Delta t}{2}\|d_t\Theta_{\eta}^{n+\theta}\|^2_{L^2(\Omega)}+(d_t\operatorname{div}\Lambda_{\mathbf{u}}^{n+1},\Theta_{\xi}^{n+1})\nonumber\\
		&+\frac{1}{\mu_{f}}({K\nabla \hat{Z}_p^{n+\theta}},{\nabla\hat{Z}_p^{n+\theta}})-(1-\theta)\frac{\kappa_1\Delta t}{\mu_{f}}({K
		d_t\nabla\Theta_{\xi}^{n+1}},{\nabla\hat{Z}_p^{n+\theta}})+(d_t\Theta_{\eta}^{n+\theta},\Lambda_p^{n+\theta}-\hat{\Lambda}_p^{n+\theta})\nonumber\\
		&=
		(d_t\Lambda_{\xi}^{n+1}, \operatorname{div}\Theta_{\mathbf{u}}^{n})
		+(d_t\Theta_{\xi}^{n+1}, \operatorname{div}\Theta_{\mathbf{u}}^{n})
		+(\Lambda_{\xi}^{n+1},d_t \operatorname{div}\Theta_{\mathbf{u}}^{n+1})
		+(R_h^{n+\theta},\hat{Z}_p^{n+\theta})\nonumber\\
		&-d_t(\mathcal{N}(\varepsilon(\mathbf{u}(t_{n+1})))-\mathcal{N}(\varepsilon(\mathcal{R}_h\mathbf{u}(t_{n+1}))
		),\varepsilon(\mathcal{R}_h\mathbf{u}(t_{n+1}))-\varepsilon(\mathbf{u}_h^{n+1}) )\nonumber\\
		&+(1-\theta)\kappa_1 (d_t\eta(t_{n+1})-d_t\eta(t_{n}),\Theta_{\xi}^{n+1}).
	\end{align}
	Applying the summation operator $\Delta t\sum_{n=0}^{l}$ to both sides of \eqref{125-4},
	we get
	\begin{align}\label{4.24}
		&\tilde{C}_3\|\varepsilon(\Theta_{\mathbf{u}}^{l+1})\|^2_{L^2(\Omega)}
		+\frac{\kappa_3}{2}\|\Theta_{\xi}^{l+1}\|^2_{L^2(\Omega)}
		+\frac{\kappa_2}{2}\|\Theta_{\eta}^{l+\theta}\|^2_{L^2(\Omega)}
		+\Delta t\sum_{n=0}^{l}\frac{K_1}{\mu_{f}}\|\nabla\hat{Z}_p^{n+\theta}\|^2_{L^2(\Omega)}
		\nonumber\\
		&+\Delta t\sum_{n=0}^{l}\left[
		\frac{\kappa_3\Delta t}{2}\|d_t\Theta_{\xi}^{n+1}\|^2_{L^2(\Omega)}
		+\frac{\kappa_2\Delta t}{2}\|d_t\Theta_{\eta}^{n+\theta}\|^2_{L^2(\Omega)}\right]
		\nonumber\\
		&\leq
		\tilde{C}_3\|\varepsilon(\Theta_{\mathbf{u}}^{0})\|^2_{L^2(\Omega)}
		+\frac{\kappa_3}{2}\|\Theta_{\xi}^{0}\|^2_{L^2(\Omega)}
		+\frac{\kappa_2}{2}d_t\|\Theta_{\eta}^{\theta-1}\|^2_{L^2(\Omega)}+\sum_{j=1}^{7}\Phi_j,
	\end{align}
	where
	\begin{align*}
		\Phi_1&=\Delta t\sum_{n=0}^{l}
		(R_h^{n+\theta},\hat{Z}_p^{n+\theta}),\nonumber\\
		\Phi_2&=\Delta t\sum_{n=0}^{l}
		(d_t\operatorname{div}\Lambda_{\mathbf{u}}^{n+1},\Theta_{\xi}^{n+1}),\nonumber\\
		\Phi_3&=-\Delta t\sum_{n=0}^{l}(d_t\Theta_{\eta}^{n+\theta},\Lambda_p^{n+\theta}-\hat{\Lambda}_p^{n+\theta}),\nonumber\\
		\Phi_4&=(1-\theta)\Delta t\sum_{n=0}^{l}\left[\frac{\kappa_1\Delta t}{\mu_{f}}({K
		d_t\nabla\Theta_{\xi}^{n+1}},{\nabla\hat{Z}_p^{n+\theta}})\right],\nonumber\\
		\Phi_5&=(1-\theta)\Delta t\sum_{n=0}^{l}\left[\kappa_1 (d_t\eta(t_{n+1})-d_t\eta(t_{n}),\Theta_{\xi}^{n+1})\right],\nonumber\\
		\Phi_6&=\Delta t\sum_{n=0}^{l}\left[
		(d_t\Lambda_{\xi}^{n+1}, \operatorname{div}\Theta_{\mathbf{u}}^{n})
		+(d_t\Theta_{\xi}^{n+1}, \operatorname{div}\Theta_{\mathbf{u}}^{n})
		+(\Lambda_{\xi}^{n+1},d_t \operatorname{div}\Theta_{\mathbf{u}}^{n+1})
		\right],\nonumber\\
		\Phi_7&=-\Delta t\sum_{n=0}^{l}d_t
		(\mathcal{N}(\varepsilon(\mathbf{u}(t_{n+1})))-
		\mathcal{N}(\varepsilon(\mathcal{R}_h\mathbf{u}(t_{n+1}))),\varepsilon(\mathcal{R}_h\mathbf{u}(t_{n+1}))-\varepsilon(\mathbf{u}_h^{n+1}) ).
	\end{align*}
	Using Cauchy-Schwarz inequality, we get
	\begin{align}\label{4.25}
		\Phi_1&\leq \Delta t\sum_{n=0}^{l}\left(
		\|R_h^{n+\theta}\|_{H^{-1}(\Omega)}
		\|\nabla\hat{Z}_p^{n+\theta}\|_{L^2(\Omega)}
		\right)\nonumber\\
		&\leq\Delta t\sum_{n=0}^{l}\left[\frac{K}{4\mu_f}\|\nabla\hat{Z}_p^{n+\theta}\|^2_{L^2(\Omega)}
		+\frac{\mu_f}{K}\|R_h^{n+\theta}\|^2_{H^{-1}(\Omega)}\right]\nonumber\\
		&\leq\Delta t\sum_{n=0}^{l}\frac{K}{4\mu_f}\|\nabla\hat{Z}_p^{n+\theta}\|^2_{L^2(\Omega)}+\Delta t\sum_{n=0}^{l}\frac{\mu_f\Delta t}{3K}\|\eta_{tt}\|^2_{L^2((t_{n-1+\theta},t_{n+\theta});H^{-1}(\Omega))}\nonumber\\
		&\leq\Delta t\sum_{n=0}^{l}\frac{K}{4\mu_f}\|\nabla\hat{Z}_p^{n+\theta}\|^2_{L^2(\Omega)}
		+\frac{\mu_f(\Delta t)^2}{3K}\|\eta_{tt}\|^2_{L^2((0,t_{l+1});H^{-1}(\Omega))},
	\end{align}
	where we have used the fact that
	\begin{align*}
		\|R_h^{n+\theta}\|^2_{H^{-1}(\Omega)}\leq
		\frac{\Delta t}{3}\int_{t_{n-1+\theta}}^{t_{n+\theta}}\|
		\eta_{tt}\|^2_{H^{-1}(\Omega)}\mathrm{dt}.
	\end{align*}	
		Using \eqref{3.13}, \eqref{4.10}, Cauchy-Schwarz inequality and \eqref{4.21}, we get
	\begin{align}\label{4.23}
		\|\Theta_{\xi}^{n+1}\|_{L^2(\Omega)}
		&\leq \frac{1}{\beta}\sup_{\mathbf{0}\neq\mathbf{v}_h\in \mathbf{V}_h}\frac{(\Theta_{\xi}^{n+1}, \div \mathbf{v}_h)}{\|\mathbf{v}_h\|_{H^1(\Omega)}}\nonumber\\
		&\leq \frac{1}{\beta}\sup_{\mathbf{0}\neq\mathbf{v}_h\in \mathbf{V}_h}\frac{(\mathcal{N}(\varepsilon(\mathbf{u}(t_{n+1})))-
		\mathcal{N}(\varepsilon(\mathbf{u}_{h}^{n+1})), \varepsilon(\mathbf{v}_h) )
		-(\Lambda_{\xi}^{n+1}, \operatorname{div} \mathbf{v}_h )}{\|\mathbf{v}_h\|_{H^1(\Omega)}}\nonumber\\
		&\leq \frac{1}{\beta}\left[
		\|\mathcal{N}(\varepsilon(\mathbf{u}(t_{n+1})))-
		\mathcal{N}(\varepsilon(\mathbf{u}_{h}^{n+1}))\|
		_{L^2(\Omega)}+\|\Lambda_{\xi}^{n+1}\|_{L^2(\Omega)}\right]\nonumber\\
		&\leq \frac{\tilde{C}_2}{\beta}\left[
		\|\varepsilon(\mathbf{u}(t_{n+1}))-
		\varepsilon(\mathbf{u}_{h}^{n+1})\|
		_{L^2(\Omega)}+\|\Lambda_{\xi}^{n+1}\|_{L^2(\Omega)}\right]\nonumber\\
		&\leq \frac{\tilde{C}_2}{\beta}\left[
		\|\varepsilon(\Lambda_{\mathbf{u}}^{n+1})\|
		_{L^2(\Omega)}+\|\varepsilon(\Theta_{\mathbf{u}}^{n+1})\|
		_{L^2(\Omega)}+\|\Lambda_{\xi}^{n+1}\|_{L^2(\Omega)}\right].
	\end{align}	
	Using Cauchy-Schwarz inequality, Young inequality and \eqref{4.23}, we get
	\begin{align}
		\Phi_2&\leq \Delta t\sum_{n=0}^{l}\left[
		\frac{1}{2}\|d_t\operatorname{div}\Lambda_{\mathbf{u}}^{n+1}\|^2_{L^2(\Omega)}+\frac{1}{2}\|\Theta_{\xi}^{n+1}\|^2_{L^2(\Omega)}\right]\nonumber\\
		&\leq \Delta t\sum_{n=0}^{l}
		\frac{\tilde{C}_2}{2\beta}\left[ 
		\|\varepsilon(\Lambda_{\mathbf{u}}^{n+1})\|
		_{L^2(\Omega)}+\|\varepsilon(\Theta_{\mathbf{u}}^{n+1})\|
		_{L^2(\Omega)}+\|\Lambda_{\xi}^{n+1}\|_{L^2(\Omega)}\right]\nonumber\\
		&+\Delta t\sum_{n=0}^{l}
		\frac{1}{2}\|d_t\operatorname{div}\Lambda_{\mathbf{u}}^{n+1}\|^2_{L^2(\Omega)}.
	\end{align}	
	Using Cauchy-Schwarz inequality and Young inequality, we get
	\begin{align}\label{4.26}
		\Phi_3&=-\Delta t\sum_{n=0}^{l}(d_t\Theta_{\eta}^{n+\theta},\Lambda_p^{n+\theta}-\hat{\Lambda}_p^{n+\theta})\nonumber\\
		&=(\Theta_{\eta}^{l+\theta},\Lambda_p^{l+\theta}-\hat{\Lambda}_p^{l+\theta})-\Delta t\sum_{n=1}^{l}(\Theta_{\eta}^{n-1+\theta},d_t\Lambda_p^{n+\theta}-d_t\hat{\Lambda}_p^{n+\theta})\nonumber\\
		&\leq\frac{\Delta t}{2} \sum_{n=1}^{l}\|\Theta_{\eta}^{n-1+\theta}\|^2_{L^2(\Omega)}
		+\frac{\Delta t}{2}\sum_{n=1}^{l}\|d_t\Lambda_p^{n+\theta}\|^2_{L^2(\Omega)}
		++\frac{\Delta t}{2}\sum_{n=1}^{l}\|d_t\hat{\Lambda}_p^{n+\theta})\|^2_{L^2(\Omega)}
		\nonumber\\
		&+\frac{\kappa_2}{4}\|\Theta_{\eta}^{l+\theta}\|^2_{L^2(\Omega)}+\frac{1}{\kappa_2}\|\Lambda_p^{l+\theta}\|^2_{L^2(\Omega)}+\frac{1}{\kappa_2}\|\hat{\Lambda}_p^{l+\theta}\|^2_{L^2(\Omega)}.
	\end{align}	
	Using Cauchy-Schwarz inequality, Young inequality and inverse inequality \eqref{122-8}, we get
	\begin{align}\label{4.27}
		\Phi_4&=(1-\theta)\Delta t\sum_{n=0}^{l}\frac{\kappa_1\Delta t}{\mu_{f}}({Kd_t\nabla\Theta_{\xi}^{n+1}},{\nabla\hat{Z}_p^{n+\theta}})\nonumber\\
		&\leq (1-\theta)\Delta t\sum_{n=0}^{l}
		\frac{K_2\kappa_1\Delta t}{\mu_{f}}\|d_t\nabla\Theta_{\xi}^{n+1}\|_{L^2(\Omega)}\|\nabla\hat{Z}_p^{n+\theta}\|_{L^2(\Omega)}\nonumber\\
		&\leq (1-\theta)\Delta t\sum_{n=0}^{l}
		\frac{K_2^2\kappa_1^2(\Delta t)^2}{\mu_{f}K_1}\|d_t\nabla\Theta_{\xi}^{n+1}\|^2_{L^2(\Omega)}
		+(1-\theta)\Delta t\sum_{n=0}^{l}\frac{K_1}{4\mu_f}\|\nabla\hat{Z}_p^{n+\theta}\|_{L^2(\Omega)}\nonumber\\
		&\leq (1-\theta)\Delta t\sum_{n=0}^{l}
		\frac{cK_2^2\kappa_1^2(\Delta t)^2}{\mu_{f}K_1h^2}\|d_t\Theta_{\xi}^{n+1}\|^2_{L^2(\Omega)}
		+(1-\theta)\Delta t\sum_{n=0}^{l}\frac{K_1}{4\mu_f}\|\nabla\hat{Z}_p^{n+\theta}\|_{L^2(\Omega)}\nonumber\\
		&\leq (1-\theta)\Delta t\sum_{n=0}^{l}
		\frac{\kappa_3\Delta t}{4}\|d_t\Theta_{\xi}^{n+1}\|^2_{L^2(\Omega)}
		+(1-\theta)\Delta t\sum_{n=0}^{l}\frac{K_1}{4\mu_f}\|\nabla\hat{Z}_p^{n+\theta}\|_{L^2(\Omega)},
	\end{align}	
	where we assume $\Delta t\leq\frac{\mu_fK_1\kappa_3}{cK_2^2\kappa_1^2}h^2$.	
	Using the integration by parts and $d_t\eta(t_0)=0$, we get
	\begin{align}\label{4.28}
		\Phi_5&=(1-\theta)\kappa_1\Delta t\sum_{n=0}^{l}[ (d_t\eta(t_{n+1})-d_t\eta(t_{n}),\Theta_{\xi}^{n+1})]\nonumber\\
		&=(1-\theta)\kappa_1(\Delta t)^2\left[
		\frac{1}{\Delta t}(d_t\eta(t_{l+1}),\Theta_{\xi}^{l+1})
		-\sum_{n=1}^{l}(d_t\eta(t_{n+1}), d_t\Theta_{\xi}^{n+1})		
		\right]\nonumber\\
		&\leq(1-\theta)\left((\Delta t)^2\frac{\kappa_1^2}{\kappa_3}\|d_t\eta(t_{l+1})\|^2_{L^2(\Omega)}
		+\frac{\kappa_3}{4}\|\Theta_{\xi}^{l+1}\|^2_{L^2(\Omega)}\right)\nonumber\\
		&+(1-\theta)(\Delta t)^2\sum_{n=1}^{l}
		\left[
		\frac{\kappa_1^2}{\kappa_3}\|d_t\eta(t_{n+1})\|^2_{L^2(\Omega)}
		+\frac{\kappa_3}{4}\|d_t\Theta_{\xi}^{l+1}\|^2_{L^2(\Omega)}
		\right]\nonumber\\
		&\leq(1-\theta)(\Delta t)^2\frac{2\kappa_1^2}{\kappa_3}\|\eta_t\|^2_{L^2((0,t_{l+1});L^2(\Omega))}
		+(1-\theta)\frac{\kappa_3}{4}\|\Theta_{\xi}^{l+1}\|^2_{L^2(\Omega)}\nonumber\\
		&+(1-\theta)\Delta t\sum_{n=1}^{l}\frac{\kappa_3}{4}\Delta t
		\|d_t\Theta_{\xi}^{l+1}\|^2_{L^2(\Omega)}.
	\end{align}	
	Using Cauchy-Schwarz inequality and Young inequality, we get
	\begin{align}\label{4.29}
		\Phi_6&=\Delta t\sum_{n=0}^{l}[
		(d_t\Lambda_{\xi}^{n+1}, \operatorname{div}\Theta_{\mathbf{u}}^{n})
		+(d_t\Theta_{\xi}^{n+1}, \operatorname{div}\Theta_{\mathbf{u}}^{n})
		+(\Lambda_{\xi}^{n+1},d_t \operatorname{div}\Theta_{\mathbf{u}}^{n+1})]\nonumber\\
		&\leq 
		\Delta t\sum_{n=0}^{l}\left[
		\frac{1}{2}\|d_t\Lambda_{\xi}^{n+1}\|^2_{L^2(\Omega)}
		+(\frac{1}{2}+\frac{1}{2\epsilon_1})\|\varepsilon(\Theta_{\mathbf{u}}^{n})\|^2_{L^2(\Omega)}
		+\frac{\epsilon_1}{2}\|d_t\Theta_{\xi}^{n+1}\|^2_{L^2(\Omega)}
		\right]\nonumber\\
		&+\Delta t\sum_{n=1}^{l}\left[ 
		\frac{1}{2}\|d_t\Lambda_{\xi}^{n+1}\|^2_{L^2(\Omega)}
		+\frac{1}{2}\|\varepsilon(\Theta_{\mathbf{u}}^{n})\|^2_{L^2(\Omega)}\right]
		+\frac{1}{2\epsilon_2}\|\Lambda_{\xi}^{l+1}\|^2_{L^2(\Omega)}
		+\frac{\epsilon_2}{2}\|\varepsilon(\Theta_{\mathbf{u}}^{l+1})\|^2_{L^2(\Omega)}
		\nonumber\\
		&+\frac{1}{2}\|\Lambda_{\xi}^{1}\|^2_{L^2(\Omega)}
		+\frac{1}{2}\|\varepsilon(\Theta_{\mathbf{u}}^{0})\|^2_{L^2(\Omega)}\nonumber\\
		&\leq 
		\Delta t\sum_{n=0}^{l}\left[
		\|d_t\Lambda_{\xi}^{n+1}\|^2_{L^2(\Omega)}
		+(1+\frac{1}{2\epsilon_1})\|\varepsilon(\Theta_{\mathbf{u}}^{n})\|^2_{L^2(\Omega)}
		+\frac{\epsilon_1}{2}\|d_t\Theta_{\xi}^{n+1}\|^2_{L^2(\Omega)}
		\right]\nonumber\\
		&+\frac{1}{2\epsilon_2}\|\Lambda_{\xi}^{l+1}\|^2_{L^2(\Omega)}
		+\frac{\epsilon_2}{2}\|\varepsilon(\Theta_{\mathbf{u}}^{l+1})\|^2_{L^2(\Omega)}
		+\frac{1}{2}\|\Lambda_{\xi}^{1}\|^2_{L^2(\Omega)}
		+\frac{1}{2}\|\varepsilon(\Theta_{\mathbf{u}}^{0})\|^2_{L^2(\Omega)},
	\end{align}	
	where we have used the summation by parts formula
	\begin{align*}
		\Delta t\sum_{n=0}^{l}(\Lambda_{\xi}^{n+1},d_t \operatorname{div}\Theta_{\mathbf{u}}^{n+1})
		=
		-(\Lambda_{\xi}^{1}, \operatorname{div}\Theta_{\mathbf{u}}^{0})
		+(\Lambda_{\xi}^{l+1}, \operatorname{div}\Theta_{\mathbf{u}}^{l+1})
		-	\Delta t\sum_{n=1}^{l}(d_t\Lambda_{\xi}^{n+1}, \operatorname{div}\Theta_{\mathbf{u}}^{n}).
	\end{align*}
	Using \eqref{4.21}, we get
	\begin{align}\label{4.30}
		\Phi_7&=-\Delta t\sum_{n=0}^{l}d_t
		(\mathcal{N}(\varepsilon(\mathbf{u}(t_{n+1})))-
		\mathcal{N}(\varepsilon(\mathcal{R}_h\mathbf{u}(t_{n+1}))),\varepsilon(\mathcal{R}_h\mathbf{u}(t_{n+1}))-\varepsilon(\mathbf{u}_h^{n+1}) )\nonumber\\
		&=(\mathcal{N}(\varepsilon(\mathbf{u}(t_{l+1})))-
		\mathcal{N}(\varepsilon(\mathcal{R}_h\mathbf{u}(t_{l+1}))),\varepsilon(\mathcal{R}_h\mathbf{u}(t_{l+1}))-\varepsilon(\mathbf{u}_h^{l+1}) )\nonumber\\
		&+(\mathcal{N}(\varepsilon(\mathbf{u}(0)))-
		\mathcal{N}(\varepsilon(\mathcal{R}_h\mathbf{u}(0)),\varepsilon(\mathcal{R}_h\mathbf{u}(0))-\varepsilon(\mathbf{u}_h^{0}) )\nonumber\\
		&\leq\|\mathcal{N}(\varepsilon(\mathbf{u}(t_{l+1})))-
		\mathcal{N}(\varepsilon(\mathcal{R}_h\mathbf{u}(t_{l+1})))\|_{L^2(\Omega)}\cdot\|\varepsilon
		(\Theta_{\mathbf{u}}^{l+1})\|_{L^2(\Omega)}\nonumber\\
		&+\|\mathcal{N}(\varepsilon(\mathbf{u}(0)))-
		\mathcal{N}(\varepsilon(\mathcal{R}_h\mathbf{u}(0)))\|_{L^2(\Omega)}\cdot\|\varepsilon
		(\Theta_{\mathbf{u}}^{0})\|_{L^2(\Omega)}\nonumber\\
		&\leq 
		\|\varepsilon(\mathbf{u}(t_{l+1}))-
		\varepsilon(\mathcal{R}_h\mathbf{u}(t_{l+1}))\|_{L^2(\Omega)}\cdot\|\varepsilon
		(\Theta_{\mathbf{u}}^{l+1})\|_{L^2(\Omega)}\nonumber\\
		&+\|\varepsilon(\mathbf{u}(0))-
		\varepsilon(\mathcal{R}_h\mathbf{u}(0))\|_{L^2(\Omega)}\cdot\|\varepsilon
		(\Theta_{\mathbf{u}}^{0})\|_{L^2(\Omega)}\nonumber\\
		&=\|\varepsilon(\Lambda_{\mathbf{u}}^{l+1})\|_{L^2(\Omega)}\cdot\|\varepsilon
		(\Theta_{\mathbf{u}}^{l+1})\|_{L^2(\Omega)}
		+
		\|\varepsilon(\Lambda_{\mathbf{u}}^{0})\|_{L^2(\Omega)}\cdot\|\varepsilon
		(\Theta_{\mathbf{u}}^{0})\|_{L^2(\Omega)}\nonumber\\
		&\leq \frac{1}{\tilde{C}_3}\|\varepsilon(\Lambda_{\mathbf{u}}^{l+1})\|^2_{L^2(\Omega)}
		+\frac{\tilde{C}_3}{4}\|\varepsilon
		(\Theta_{\mathbf{u}}^{l+1})\|^2_{L^2(\Omega)}
		+\frac{1}{2}	\|\varepsilon(\Lambda_{\mathbf{u}}^{0})\|^2_{L^2(\Omega)}
		+\frac{1}{2}\|\varepsilon
		(\Theta_{\mathbf{u}}^{0})\|^2_{L^2(\Omega)}.
	\end{align}	
	Substituting \eqref{4.25}-\eqref{4.30} into \eqref{4.24}, taking $\epsilon_1,\epsilon_2$ small enough, using \eqref{4.2}, \eqref{4.5}, \eqref{4.7} and applying the discrete Gronwall inequality, we obtain
	\begin{align}
		&\tilde{C}_3\|\varepsilon(\Theta_{\mathbf{u}}^{l+1})\|^2_{L^2(\Omega)}
		+\frac{\kappa_3}{2}\|\Theta_{\xi}^{l+1}\|^2_{L^2(\Omega)}
		+\frac{\kappa_2}{2}\|\Theta_{\eta}^{l+\theta}\|^2_{L^2(\Omega)}
		+\Delta t\sum_{n=0}^{l}\frac{K_1}{\mu_{f}}\|\nabla\hat{Z}_p^{n+\theta}\|^2_{L^2(\Omega)}\nonumber\\
		&\leq C\left[ \tilde{C}_3\|\varepsilon(\Theta_{\mathbf{u}}^{0})\|^2_{L^2(\Omega)}
		+\frac{1}{2}	\|\varepsilon(\Lambda_{\mathbf{u}}^{0})\|^2_{L^2(\Omega)}
		+\frac{\kappa_3}{2}\|\Theta_{\xi}^{0}\|^2_{L^2(\Omega)}
		+\frac{\kappa_2}{2}d_t\|\Theta_{\eta}^{\theta-1}\|^2_{L^2(\Omega)}
		+\frac{1}{2}\|\Lambda_{\xi}^{1}\|^2_{L^2(\Omega)}
		\right.\nonumber\\
		&+\frac{\Delta t}{2}\sum_{n=1}^{l}\|d_t\Lambda_p^{n+\theta}\|^2_{L^2(\Omega)}
		+\frac{\Delta t}{2}\sum_{n=1}^{l}\|d_t\hat{\Lambda}_p^{n+\theta})\|^2_{L^2(\Omega)}
		+\Delta t\sum_{n=0}^{l}
		\|d_t\Lambda_{\xi}^{n+1}\|^2_{L^2(\Omega)}
		\nonumber\\
		&+\Delta t\sum_{n=0}^{l}
		\frac{\tilde{C}_2}{2\beta}\left[ 
		\|\varepsilon(\Lambda_{\mathbf{u}}^{n+1})\|
		_{L^2(\Omega)}+\|\Lambda_{\xi}^{n+1}\|_{L^2(\Omega)}\right]
		+\Delta t\sum_{n=0}^{l}
		\frac{1}{2}\|d_t\operatorname{div}\Lambda_{\mathbf{u}}^{n+1}\|^2_{L^2(\Omega)}\nonumber\\
		&+\|\varepsilon(\Lambda_{\mathbf{u}}^{l+1})\|^2_{L^2(\Omega)}	+\|\Lambda_{\xi}^{l+1}\|^2_{L^2(\Omega)}
		+\|\Lambda_p^{l+\theta}\|^2_{L^2(\Omega)}
		+\|\hat{\Lambda}_p^{l+\theta}\|^2_{L^2(\Omega)}\nonumber\\
		&\left.+(\Delta t)^2\frac{2\kappa_1^2}{\kappa_3}\|\eta_t\|^2_{L^2((0,t_{l+1});L^2(\Omega))}
		+\frac{\mu_f(\Delta t)^2}{3K}\|\eta_{tt}\|^2_{L^2((0,t_{l+1});H^{-1}(\Omega))}\right]\nonumber\\
		&\leq C(\Delta t)^2\left(
		\|\eta_t\|^2_{L^2((0,t_{l+1});L^2(\Omega))}
		+\|\eta_{tt}\|^2_{L^2((0,t_{l+1});H^{-1}(\Omega))}\right)\nonumber\\
		&+Ch^4\left[
		\|\xi_t\|^2_{L^2((0,T);H^2(\Omega))}
		+\|\eta_t\|^2_{L^2((0,T);H^2(\Omega))}
		+\|\nabla\mathbf{u}\|^2_{L^{2}((0,T);H^2(\Omega))}
		\right.\nonumber\\
		&+\left.\|\xi\|^2_{L^{\infty}((0,T);H^2(\Omega))}
		+\|\eta\|^2_{L^{\infty}((0,T);H^2(\Omega))}
		+\|\nabla\mathbf{u}\|^2_{L^{\infty}((0,T);H^2(\Omega))}
		+\|\operatorname{div}\mathbf{u}_t\|^2_{L^{2}((0,T);H^2(\Omega))}
		\right],
	\end{align}
	provided that $\Delta t\leq\frac{\mu_fK_1\kappa_3}{cK_2^2\kappa_1^2}h^2$
	when $\theta=0$ or $\Delta t>0$ when $\theta=1$. 
	Hence, we deduce that \eqref{4.9} holds. The proof if complete.	
\end{proof}
\begin{theorem}\label{them_4.2}
		Let $ \left\lbrace (\mathbf{u}_{h}^{n}, \xi_{h}^{n}, \eta_{h}^{n})\right\rbrace_{n\geq0}  $ be defined by the MFEM, then there holds
	\begin{align}
		\max_{0\leq n\leq l}\left[\right. \|\varepsilon(\mathbf{u}(t_{n+1}))&-\varepsilon(\mathbf{u}_h^{n+1})\|_{L^2(\Omega)}
		+\left.\|\xi(t_{n+1})-\xi_h^{n+1}\|_{L^2(\Omega)}\right.\nonumber\\
		&\left.+\|\eta(t_{n+\theta})-\eta_h^{n+\theta}\|_{L^2(\Omega)}\right] 
		\leq \hat{C}_1(T)\Delta t+\hat{C}_2(T)h^2,
	\end{align}
	\begin{align}
		\left(\Delta t\sum_{n=0}^{l}\frac{K_1}{\mu_{f}}\|\nabla
		p(t_{n+1})-\nabla p_h^{n+1}
		\|^2_{L^2(\Omega)}\right)^{\frac{1}{2}}
		\leq \hat{C}_1(T)\Delta t+\hat{C}_2(T)h,
	\end{align}
	provided that $\Delta t=O(h^2)$ when $\theta=0$ and $\Delta t>0$ when $\theta=1$. Here
	\begin{align*}
		\hat{C}_1(T)&=C\left(
		\|\eta_t\|^2_{L^2((0,T);L^2(\Omega))}
		+\|\eta_{tt}\|^2_{L^2((0,T);H^{-1}(\Omega))}\right),\nonumber\\
		\hat{C}_2(T)&=C\left(
		\|\xi_t\|^2_{L^2((0,T);H^2(\Omega))}
		+\|\eta_t\|^2_{L^2((0,T);H^2(\Omega))}
		+\|\nabla\mathbf{u}\|^2_{L^{2}((0,T);H^2(\Omega))}\right.\nonumber\\
		&+\left.\|\xi\|^2_{L^{\infty}((0,T);H^2(\Omega))}
		+\|\eta\|^2_{L^{\infty}((0,T);H^2(\Omega))}
		+\|\nabla\mathbf{u}\|^2_{L^{\infty}((0,T);H^2(\Omega))}
		+\|\operatorname{div}\mathbf{u}_t\|^2_{L^{2}((0,T);H^2(\Omega))}
		\right).
	\end{align*}
\end{theorem}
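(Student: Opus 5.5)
The plan is to combine Lemma \ref{lemma4.1} with the triangle inequality and the projection estimates \eqref{4.2}, \eqref{4.5} and \eqref{4.7}. For the first estimate, I split each error into its projection part and its discrete part:
\[
E_{\mathbf{u}}^{n+1}=\Lambda_{\mathbf{u}}^{n+1}+\Theta_{\mathbf{u}}^{n+1},\qquad
E_{\xi}^{n+1}=\Lambda_{\xi}^{n+1}+\Theta_{\xi}^{n+1},\qquad
E_{\eta}^{n+\theta}=\Lambda_{\eta}^{n+\theta}+\Theta_{\eta}^{n+\theta}.
\]
The discrete parts $\|\varepsilon(\Theta_{\mathbf{u}}^{n+1})\|_{L^2(\Omega)}$, $\|\Theta_{\xi}^{n+1}\|_{L^2(\Omega)}$ and $\|\Theta_{\eta}^{n+\theta}\|_{L^2(\Omega)}$ are bounded by $C(\Delta t+h^2)$. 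This follows by taking square roots in \eqref{4.9} and dividing by $\tilde C_3$, $\kappa_3/2$ and $\kappa_2/2$, which stay bounded away from zero, since the reformulation keeps $\kappa_3$ nonzero as $\lambda\to\infty$ and $\kappa_2>0$ for fixed $\lambda$. For the projection parts, \eqref{4.2} gives $\|\Lambda_\xi^{n+1}\|_{L^2(\Omega)}+\|\Lambda_\eta^{n+\theta}\|_{L^2(\Omega)}\le Ch^2(\|\xi\|_{L^\infty(H^2)}+\|\eta\|_{L^\infty(H^2)})$. For $\varepsilon(\Lambda_{\mathbf u}^{n+1})$, \eqref{4.7} with $m=3$ gives $\|\nabla\Lambda_{\mathbf u}\|_{L^2(\Omega)}\le Ch^2\|\mathbf u\|_{H^3(\Omega)}$, which is controlled by $\|\nabla\mathbf u\|_{L^\infty(H^2)}$ up to the lower-order part (handled by Korn/Poincar\'e on $\mathbf H^1_\perp$). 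Taking the maximum over $n$ then gives the first bound, and the constants match $\hat C_1(T)$ and $\hat C_2(T)$ after absorbing the projection norms into $\hat C_2$.

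For the pressure gradient I would use the identity $E_p^{n+1}=\kappa_1E_\xi^{n+1}+\kappa_2E_\eta^{n+1}+(1-\theta)\kappa_2\Delta t\,d_t\eta_h^{n+1}$ and rewrite it with the elliptic projection as $E_p^{n+1}=\hat\Lambda_p^{n+1}+\hat Z_p^{n+\theta}-(\Lambda_p^{n+\theta}-\hat\Lambda_p^{n+\theta})+(\text{time-shift term})$. Here $\hat Z_p^{n+\theta}$ is exactly the quantity whose discrete $L^2(H^1)$ norm is controlled in \eqref{4.9}. Then:
\begin{itemize}
\item $\Delta t\sum\|\nabla\hat Z_p^{n+\theta}\|^2_{L^2(\Omega)}\le C(\Delta t^2+h^4)$ directly from \eqref{4.9}.
\item $\|\nabla\hat\Lambda_p\|_{L^2(\Omega)}\le Ch\|p\|_{H^2(\Omega)}$ by \eqref{4.5}, and $p=\kappa_1\xi+\kappa_2\eta$ is bounded by the $\xi,\eta$ norms in $\hat C_2$.
\item $\|\nabla\Lambda_p\|_{L^2(\Omega)}\le Ch$ by \eqref{4.2}. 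This is what produces the $O(h)$ rate rather than $h^2$ in the second estimate.
\end{itemize}

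In the decoupled case $\theta=0$ there is an extra mismatch $\nabla p(t_{n+1})$ versus $\nabla p$ evaluated with $\eta$ at $t_n$. I would write $\eta(t_{n+1})-\eta(t_n)=\int_{t_n}^{t_{n+1}}\eta_t\,ds$, so its gradient contributes $C\Delta t\,\|\nabla\eta_t\|_{L^2(L^2)}$, which is controlled by $\|\eta_t\|_{L^2(H^2)}$. Any remaining term of the form $\Delta t\,\nabla d_t\Theta_\eta$ I would bound by the inverse inequality \eqref{122-8} together with the dissipation term $\Delta t\sum\kappa_2\Delta t\|d_t\Theta_\eta\|^2_{L^2(\Omega)}$ appearing on the left of \eqref{4.24}, using $\Delta t=O(h^2)$.

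I expect this bookkeeping of the time-lagged terms in the pressure gradient when $\theta=0$ to be the main obstacle. The inverse inequality costs a factor $h^{-1}$, and one must check that the CFL-type restriction $\Delta t\le Ch^2$ exactly compensates for it. Summing all contributions and taking square roots gives the second estimate with rate $\hat C_1\Delta t+\hat C_2h$.
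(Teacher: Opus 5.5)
Your proposal is correct and follows the paper's own proof: the triangle inequality on the $\Lambda/\Theta$ (and $\hat\Lambda/\hat\Theta$) splittings, combined with \eqref{4.2}, \eqref{4.5}, \eqref{4.7} and Lemma \ref{lemma4.1}, where you spell out the pressure-gradient bookkeeping that the paper leaves implicit. One harmless slip: for $\theta=1$ we have $\hat Z_p^{n+1}=\mathcal{S}_hp(t_{n+1})-p_h^{n+1}$, so the correct identity is $E_p^{n+1}=\hat\Lambda_p^{n+1}+\hat Z_p^{n+1}$ without your extra $-(\Lambda_p-\hat\Lambda_p)$ term; for $\theta=0$ it is $E_p^{n+1}=\hat\Lambda_p^{n}+\hat Z_p^{n}+\kappa_1(I-\mathcal{Q}_h)(\xi(t_{n+1})-\xi(t_n))+\kappa_2(\eta(t_{n+1})-\eta(t_n))$, so no inverse-inequality step is needed, and since every such term has an $O(h)$ or $O(\Delta t)$ gradient contribution, your conclusion is unaffected.
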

\begin{proof}
	The above estimates follow immediately form an application of the triangle inequality on
	\begin{align*}
		&\mathbf{u}(t_{n+1})-\mathbf{u}_h^{n+1}=\Lambda_\mathbf{u}^{n+1}+\Theta_\mathbf{u}^{n+1},\\
		&\xi(t_{n+1})-\xi_{h}^{n+1}=\Lambda_\xi^{n+1}+\Theta_\xi^{n+1}=\hat{\Lambda}_\xi^{n+1}+\hat{\Theta}_\xi^{n+1},\\
		&\eta(t_{n+1})-\xi_{h}^{n+1}=\Lambda_\eta^{n+1}+\Theta_\eta^{n+1}=\hat{\Lambda}_\eta^{n+1}+\hat{\Theta}_\eta^{n+1},\\
		&p(t_{n+1})-p_{h}^{n+1}=\Lambda_p^{n+1}+\Theta_p^{n+1}=\hat{\Lambda}_p^{n+1}+\hat{\Theta}_p^{n+1},
	\end{align*}
	 and appealing to \eqref{4.2}, \eqref{4.5}, \eqref{4.7} and Lemma \ref{lemma4.1}. The proof is complete.
\end{proof}

\subsection{$L^{2}$ error estimation of displacement $\mathbf {u} $}\label{sec-3.3} In this subsection, we prove the $L^{2}$ error estimation of displacement $\mathbf {u} $ by introducing an auxiliary problem.

\begin{theorem}\label{them_4.3}
	Let $ \left\lbrace (\mathbf{u}_{h}^{n}, \xi_{h}^{n}, \eta_{h}^{n})\right\rbrace_{n\geq0}  $ be defined by the MFEM, then there holds
	\begin{align}\label{4-35}
		\|\mathbf{u}(t_{n})-\mathbf{u}_h^{n}\|_{L^{2}(\Omega)} \leq Ch^{3}.
	\end{align}
\end{theorem}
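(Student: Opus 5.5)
Split $E_{\mathbf{u}}^n=\Lambda_{\mathbf{u}}^n+\Theta_{\mathbf{u}}^n$. By \eqref{4.7} with $m=3$, $\|\Lambda_{\mathbf{u}}^n\|_{L^2(\Omega)}\le Ch^3\|\mathbf{u}(t_n)\|_{H^3(\Omega)}$. It therefore suffices to bound $\|\Theta_{\mathbf{u}}^n\|_{L^2(\Omega)}$, equivalently $\|E_{\mathbf{u}}^n\|_{L^2(\Omega)}$ itself, by a duality (Aubin--Nitsche) argument built on the error equations \eqref{4.10}--\eqref{4.11}. Since $\Theta_{\mathbf{u}}^n\in\mathbf{V}_h\subset\mathbf{H}^1_\perp(\Omega)$, a bound on $\varepsilon(\Theta_{\mathbf{u}}^n)$ also controls the $L^2$ norm via Korn \eqref{11-25-4} and the Poincar\'e inequality \eqref{12-12-1}. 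That route only gives $O(h^2)$, however, so the extra power of $h$ has to come from duality.

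\textbf{Step 1: the auxiliary problem.} Linearize the nonlinear operator along the segment between $\mathbf{u}(t_n)$ and $\mathbf{u}_h^n$ using the representation \eqref{11-25-10}. Define the fourth-order tensor
\[
\mathbb{A}^n(x):=\int_0^1 D\mathcal{N}\bigl(\varepsilon(\mathbf{u}_h^n)+s\,\varepsilon(E_{\mathbf{u}}^n)\bigr)\,\mathrm{d}s,
\]
so that $\mathcal{N}(\varepsilon(\mathbf{u}(t_n)))-\mathcal{N}(\varepsilon(\mathbf{u}_h^n))=\mathbb{A}^n\varepsilon(E_{\mathbf{u}}^n)$. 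By Lemmas \ref{11-28-1}--\ref{11-28-2} and the argument of Lemma \ref{11-28-3}, $\mathbb{A}^n$ is bounded and uniformly elliptic. Then pose the linear generalized Stokes problem: find $(\mathbf{w},r)\in\mathbf{H}^1_\perp(\Omega)\times L^2(\Omega)$ with
\[
-\operatorname{div}\bigl((\mathbb{A}^n)^T\varepsilon(\mathbf{w})\bigr)+\nabla r=E_{\mathbf{u}}^n-\Pi_{\mathbf{RM}}E_{\mathbf{u}}^n,\qquad \kappa_3 r+\operatorname{div}\mathbf{w}=0,
\]
with the natural traction boundary condition. On the convex domain $\Omega$ I would assume the elliptic regularity
\[
\|\mathbf{w}\|_{H^2(\Omega)}+\|r\|_{H^1(\Omega)}\le C\|E_{\mathbf{u}}^n\|_{L^2(\Omega)}.
\]
This needs $\mathbb{A}^n$ to be Lipschitz, which requires a $W^{1,\infty}$ bound on $\mathbf{u}_h^n$. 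I would obtain that from Theorem \ref{them_4.2} together with the inverse inequality \eqref{122-8}, under the assumption $\mathbf{u}\in W^{2,\infty}$. This regularity claim is the main obstacle, and I would state it as a hypothesis if it cannot be fully justified.

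\textbf{Step 2: duality identity.} Test the auxiliary problem with $E_{\mathbf{u}}^n$ and subtract the Galerkin orthogonality relations \eqref{4.10}--\eqref{4.11}, taking $\mathbf{v}_h=\mathcal{R}_h\mathbf{w}$ and $\varphi_h=\mathcal{Q}_h r$. This gives
\[
\|E_{\mathbf{u}}^n\|^2_{L^2(\Omega)}\approx\bigl(\mathbb{A}^n\varepsilon(E_{\mathbf{u}}^n),\varepsilon(\mathbf{w}-\mathcal{R}_h\mathbf{w})\bigr)-\bigl(E_\xi^n,\operatorname{div}(\mathbf{w}-\mathcal{R}_h\mathbf{w})\bigr)+\bigl(\operatorname{div}E_{\mathbf{u}}^n+\kappa_3E_\xi^n,\,r-\mathcal{Q}_h r\bigr)+\kappa_1\bigl(E_\eta^{n},\mathcal{Q}_h r\bigr)+\text{(time-lag terms)}.
\]

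\textbf{Step 3: bounding the terms.} For the first three terms, use \eqref{4.7} and \eqref{4.2} to get $\|\mathbf{w}-\mathcal{R}_h\mathbf{w}\|_{H^1(\Omega)}\le Ch\|\mathbf{w}\|_{H^2(\Omega)}$ and $\|r-\mathcal{Q}_h r\|_{L^2(\Omega)}\le Ch\|r\|_{H^1(\Omega)}$. Combined with the $O(h^2)$ energy bounds of Theorem \ref{them_4.2}, each of these terms is at most $Ch^3\|E_{\mathbf{u}}^n\|_{L^2(\Omega)}$.

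The coupling term $\kappa_1(E_\eta^n,\mathcal{Q}_hr)$ is delicate, because $\|E_\eta^n\|_{L^2(\Omega)}$ is only $O(h^2+\Delta t)$. I would split it as
\[
\kappa_1(E_\eta^n,\mathcal{Q}_h r)=\kappa_1(\Lambda_\eta^n,\mathcal{Q}_h r)+\kappa_1(\Theta_\eta^n,\mathcal{Q}_h r).
\]
The first piece vanishes by the definition \eqref{4.1} of $\mathcal{Q}_h$. For the second piece I would need a superconvergent estimate of $\Theta_\eta$ in a negative norm. To get it, I would perform a parallel duality argument on the diffusion equation \eqref{4.12}, and then apply the discrete Gronwall inequality to the combined quantity $\|E_{\mathbf{u}}^n\|_{L^2(\Omega)}+\|\Theta_\eta^n\|_{H^{-1}(\Omega)}$.

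\textbf{Step 4: conclusion.} Dividing by $\|E_{\mathbf{u}}^n\|_{L^2(\Omega)}$ and adding the $\Lambda_{\mathbf{u}}^n$ bound gives \eqref{4-35}. The time-discretization contribution is absorbed under the step-size coupling $\Delta t=O(h^2)$ (or smaller) used earlier.
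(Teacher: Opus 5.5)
\textbf{Verdict: genuine gap.} Your Steps 1--2 are the right skeleton, and in one respect they are sounder than the paper's argument. With the linearized operator in the dual problem, the momentum error equation gives exact Galerkin orthogonality. The paper's dual problem \eqref{4.38} uses $\varepsilon$ instead, leaves $(\varepsilon(E_{\mathbf{u}}^n),\varepsilon(\mathcal{R}_h\mathbf{w}))$ and the $\mathcal{N}$-difference term uncancelled, and then disposes of them in \eqref{4.43}, \eqref{4.45} by treating $\|\varepsilon(\mathbf{w})\|_{L^2(\Omega)}$ as $O(h)\|\mathbf{w}\|_{H^2(\Omega)}$.

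\textbf{The coupling term.} The gap is the one you flagged, the term $\kappa_1(E_\eta^n,\mathcal{Q}_h r)$, and your proposed fix cannot work. Since $\Lambda_\eta^n\perp W_h$ and $\Theta_\eta^n\in W_h$, this term equals $\kappa_1(\Theta_\eta^n,r)=\kappa_1(E_\eta^n,r)-\kappa_1(\Lambda_\eta^n,r-\mathcal{Q}_hr)$. The second piece is $O(h^3)\|r\|_{H^1(\Omega)}$, so what you actually need is $\|E_\eta^n\|_{(H^1)'}=O(h^3)$.
\begin{itemize}
\item For piecewise linear $\xi_h,\eta_h$, no duality argument on \eqref{4.12} delivers this. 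It yields at best $\|\nabla E_p\|_{L^2(\Omega)}\cdot\inf_{\chi\in M_h}\|\nabla(\psi-\chi)\|_{L^2(\Omega)}\le Ch\cdot Ch$. The second factor is capped at $O(h)$ for $P_1$, however smooth the dual solution $\psi$ is.
\item This is sharp. For $p$ with $K\nabla p\cdot\mathbf{n}=0$, \eqref{4.3} gives $(p-\mathcal{S}_hp,-\operatorname{div}(K\nabla p))=\|K^{1/2}\nabla(p-\mathcal{S}_hp)\|^2_{L^2(\Omega)}$, which is of exact order $h^2$. So the $P_1$ pressure error has $(H^1)'$-norm of exact order $h^2$.
\item This error reaches the displacement. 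When $d_t\eta$ is negligible, \eqref{3.16} forces $p_h\approx\mathcal{S}_hp$, and $\mathbf{u}_h^n$ is essentially the $\mathbf{P}_2$ elasticity approximation with load $\alpha\nabla p_h$. The $O(h^2)$ pressure error therefore genuinely pollutes $\mathbf{u}$ in $L^2$.
\end{itemize}
Negative-norm superconvergence requires polynomial degree at least $2$. With the $\mathbf{P}_2$--$P_1$--$P_1$ spaces, your argument correctly completed gives only $\|E_{\mathbf{u}}^n\|_{L^2(\Omega)}\le C(h^2+\Delta t)$.

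The paper does not overcome this either. In \eqref{4.46} it bounds $\kappa_1(\eta-\eta_h^{n-1+\theta},z)$ using \eqref{12-12-1} as though the Poincar\'e constant were $O(h)$; it is in fact $\operatorname{diam}(\Omega)/\pi$.

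\textbf{Two secondary problems.}
\begin{itemize}
\item \emph{Time error.} It is not absorbed by $\Delta t=O(h^2)$. The backward Euler part of $E_\eta$ and the lag term $(1-\theta)\kappa_1\Delta t\,(d_t\eta,\mathcal{Q}_hr)$ are smooth errors of size $O(\Delta t)$ in every norm, so they enter the coupling term at full strength. An $h^3$ bound would need $\Delta t=O(h^3)$, and it cannot hold for $\theta=1$ with $\Delta t$ unrelated to $h$.
\item \emph{Regularity of the dual problem in Step 1.} The coefficient $\mathbb{A}^n$ is built from $\varepsilon(\mathbf{u}_h^n)$, which jumps across element faces, so $\mathbb{A}^n$ is never Lipschitz. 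A $W^{1,\infty}$ bound on $\mathbf{u}_h^n$ only gives boundedness, which Lemma \ref{11-28-2} already provides, and $H^2$-regularity of your dual problem is then not available. The standard repair is to linearize at the exact solution, use $D\mathcal{N}(\varepsilon(\mathbf{u}(t_n)))$ as the dual coefficient, and estimate the quadratic remainder separately. This costs one more derivative of $\Phi$ than the paper assumes, plus an $L^\infty$ (or $L^4$) bound on $\varepsilon(E_{\mathbf{u}}^n)$.
\end{itemize}
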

\begin{proof}
	First, 
	we construct a bilinear form
	\begin{align}\label{4.35}
		B((\mathbf{u}, \xi) ;(\mathbf{v},\varphi))
		=(\varepsilon(\mathbf{u}),\varepsilon(\mathbf{v}))
		-(\xi, \operatorname{div} \mathbf{v})
		+(\varphi, \operatorname{div} \mathbf{u})
		+k_{3}(\xi, \varphi), 
	\end{align}
	and it is easy to know that $B$ satisfies the following properties: 
	\begin{align}
		&B((\mathbf{u}, \xi) ;(\mathbf{u}, \xi))=|\mathbf{u}|_{H^{1}( \Omega)}+k_{3}\|\xi\|_{L^2(\Omega)}, \\
		&B((\mathbf{u}, \xi) ;(\mathbf{v}, \varphi))
		\leqslant C\left(|\mathbf{u}|_{H^1(\Omega)}+\|\xi\|_{L^2(\Omega)}\right)\left(|\mathbf{v}|_{H^1(\Omega)}+\|\varphi\|_{L^2(\Omega)}\right),\label{4.37}
	\end{align}
	for all $\mathbf{u},\mathbf{v}\in\mathbf{H}^1(\Omega)$ and $\xi,\varphi\in L^2(\Omega)$.
	
	Then, we introduce the following auxiliary problems 
	\begin{align}
		-\operatorname{div}\varepsilon(\mathbf{w})-\nabla z&=\mathbf{u}-\mathbf{u}_{h}^{n} ,\label{4.38} \\
		\operatorname{div}\mathbf{w}-\kappa_3z&=0 , \label{4.39}
	\end{align}
	in $\Omega$, with $\varepsilon(\mathbf{w})\mathbf{n}-z\mathbf{n}=\mathbf{0}$ on $\partial\Omega$ and $\int_{\Omega}z\mathrm{~dx}=0$.	
	From \cite{B21}, we know that
	\begin{align}\label{2025-6-18-1}
		\|\mathbf{w}\|_{H^2(\Omega)}+\|z\|_{H^1(\Omega)}\leq\|\mathbf{u}-\mathbf{u}_{h}^{n}\|_{L^2(\Omega)}.
	\end{align}	
	Multiply \eqref{4.38} by $\mathbf{u}-\mathbf{u}_{h}^{n}$ and \eqref{4.39} by $\xi-\xi_h^n$, we get
	\begin{align}\label{4.40}
		\|\mathbf{u}-\mathbf{u}_{h}^{n}\|_{L^2(\Omega)}^{2}
		&=(\mathbf{u}-\mathbf{u}_{h}^{n},\mathbf{u}-\mathbf{u}_{h}^{n})\nonumber\\
		&=(-\operatorname{div}\varepsilon(\mathbf{w})-\nabla z,\mathbf{u}-\mathbf{u}_{h}^{n})\nonumber\\
		&=(-\operatorname{div}\varepsilon(\mathbf{w}),\mathbf{u}-\mathbf{u}_{h}^{n})
		-(\nabla z,\mathbf{u}-\mathbf{u}_{h}^{n})\nonumber\\
		&=(\varepsilon(\mathbf{w}),\varepsilon(\mathbf{u}-\mathbf{u}_{h}^{n}))
		+(z,\operatorname{div}(\mathbf{u}-\mathbf{u}_{h}^{n}))\nonumber\\
		&=(\varepsilon(\mathbf{w}),\varepsilon(\mathbf{u}-\mathbf{u}_{h}^{n}))
		+(z,\operatorname{div}(\mathbf{u}-\mathbf{u}_{h}^{n}))\nonumber\\
		&\quad-(\xi-\xi_h^n,\operatorname{div}\mathbf{w})+\kappa_3(\xi-\xi_h^n,z)\nonumber\\
		&=B((\mathbf{u}-\mathbf{u}_h^n, \xi-\xi_h^n) ;(\mathbf{w},z))\nonumber\\
		&=B((\mathbf{u}-\mathbf{u}_h^n, \xi-\xi_h^n) ;(\mathbf{w}-\mathcal{R}_h\mathbf{w},z-\mathcal{Q}_hz))\nonumber\\
		&\quad+B((\mathbf{u}-\mathbf{u}_h^n, \xi-\xi_h^n);(\mathcal{R}_h\mathbf{w},\mathcal{Q}_hz)).
	\end{align}	
	Using \eqref{4.37}, \eqref{4.2}, \eqref{4.5} and  Theorem \ref{them_4.2},  we get
	\begin{align}\label{4.41}
		B&((\mathbf{u}-\mathbf{u}_{h}^{n}, \xi-\xi_{h}^{n}) ;(\mathbf{w}-\mathcal{R}_{h}\mathbf{w}, z-\mathcal{Q}_hz))\nonumber\\
		&\leq C\left(|\mathbf{u}-\mathbf{u}_{h}^{n}|_{H^1(\Omega)}+\|\xi-\xi_{h}^{n}\|_{L^{2}(\Omega)}\right)\left(|\mathbf{w}-\mathcal{R}_{h}\mathbf{w}|_{H^1(\Omega)}+\|z-\mathcal{Q}_hz\|_{L^{2}(\Omega)}\right)
		\nonumber\\
		&\leq Ch^2\left(Ch\|\mathbf{w}\|_{H^2(\Omega) }+Ch\|z\|_{H^1(\Omega)}\right)\nonumber\\
		&\leq Ch^3\left(\|\mathbf{w}\|_{H^2(\Omega) }+\|z\|_{H^1(\Omega)}\right).
	\end{align}	
		Using \eqref{2.24}-\eqref{2.25} and \eqref{3.14}-\eqref{3.15},  setting $\mathbf{v}_h=\mathcal{R}_h\mathbf{w}$ and $\varphi_h=\mathcal{Q}_hz$, we get
	\begin{align}\label{4.42}
		&B((\mathbf{u}-\mathbf{u}_h^n, \xi-\xi_h^n);(\mathcal{R}_h\mathbf{w},\mathcal{Q}_hz))\nonumber\\
		&=(\varepsilon(\mathbf{u}-\mathbf{u}_h^n),\varepsilon(\mathcal{R}_h\mathbf{w}))
		-(\xi-\xi_h^n, \operatorname{div} \mathcal{R}_h\mathbf{w})
		+(\mathcal{Q}_hz, \operatorname{div}( \mathbf{u}-\mathbf{u}_h^n))
		+k_{3}(\xi-\xi_h^n, \mathcal{Q}_hz)\nonumber\\
		&=(\varepsilon(\mathbf{u}-\mathbf{u}_h^n),\varepsilon(\mathcal{R}_h\mathbf{w}))
		-(\xi-\xi_h^n, \operatorname{div} \mathcal{R}_h\mathbf{w})
		+\kappa_1(\eta-\eta_h^{n-1+\theta},\mathcal{Q}_hz)\nonumber\\
		&=(\varepsilon(\mathbf{u}-\mathbf{u}_h^n),\varepsilon(\mathcal{R}_h\mathbf{w}))+
		(\mathcal{N}\varepsilon(\mathbf{u})-\mathcal{N}\varepsilon(\mathbf{u}_h^n),\varepsilon(\mathcal{R}_h\mathbf{w}))\nonumber\\
		&\quad+\kappa_1(\eta-\eta_h^{n-1+\theta},\mathcal{Q}_hz-z)+\kappa_1(\eta-\eta_h^{n-1+\theta},z).
	\end{align}
	Using \eqref{4.21}, \eqref{4.7}, 
	Theorem \ref{them_4.2} and Cauchy-Schwarz inequality, we get
	\begin{align}\label{4.43}
		(\mathcal{N}(\varepsilon(\mathbf{u}))-\mathcal{N}(\varepsilon(\mathbf{u}_h^n)), \varepsilon(\mathcal{R}_h\mathbf{w}))
		&\leq \|\mathcal{N}(\varepsilon(\mathbf{u}))-\mathcal{N}(\varepsilon(\mathbf{u}_h^n))\|_{L^2(\Omega)}\cdot\|\varepsilon(\mathcal{R}_h\mathbf{w})\|_{L^2(\Omega)}\nonumber\\
		&\leq\|\varepsilon(\mathbf{u}-\mathbf{u}_h^n)\|_{L^2(\Omega)}\cdot\|\varepsilon(\mathcal{R}_h\mathbf{w}-\mathbf{w})+\varepsilon(\mathbf{w})\|_{L^2(\Omega)}\nonumber\\
		&\leq  Ch^2\cdot (Ch\|\mathbf{w}\|_{H^2(\Omega)}+Ch\|\mathbf{w}\|_{H^2(\Omega)})
		\nonumber\\
		&\leq  Ch^3\|\mathbf{w}\|_{H^2(\Omega)}.
	\end{align}	
	Similarly,
	\begin{align}\label{4.45}
		(\varepsilon(\mathbf{u}-\mathbf{u}_{h}^{n}),\varepsilon(\mathcal{R}_h\mathbf{w}))
		\leq\|\varepsilon(\mathbf{u}-\mathbf{u}_h^n)\|_{L^2(\Omega)}\cdot\|\varepsilon(\mathcal{R}_h\mathbf{w})\|_{L^2(\Omega)}\leq  Ch^3\|\mathbf{w}\|_{H^2(\Omega)}.
	\end{align}	
	Using \eqref{4.2}, \eqref{12-12-1} and Theorem \ref{them_4.2}, we get
	\begin{align}\label{4.46}
		&\kappa_1\left(\eta-\eta_{h}^{n-1+\theta},\mathcal{Q}_hz-z\right)+\kappa_1\left(\eta-\eta_{h}^{n-1+\theta},z\right)\nonumber\\
		&\leq \kappa_1\|\eta-\eta_{h}^{n-1+\theta}\|_{L^2(\Omega)}\cdot(\|\mathcal{Q}_hz-z\|_{L^2(\Omega)}+\|z\|_{L^2(\Omega)})\nonumber\\
		&\leq Ch^2(Ch\|z\|_{H^1(\Omega)}+Ch\|\nabla z\|_{L^2(\Omega)})\nonumber\\
		&\leq Ch^3\|z\|_{H^1(\Omega)}.
	\end{align}	
	Substituting \eqref{4.41}-\eqref{4.46} into \eqref{4.40} and using \eqref{2025-6-18-1}, we get
	\begin{align}
			\|\mathbf{u}-\mathbf{u}_{h}^{n}\|_{L^2(\Omega)}^{2}
			\leq Ch^3(\|\mathbf{w}\|_{H^2(\Omega)}+\|z\|_{H^1(\Omega)})
			\leq Ch^3\|\mathbf{u}-\mathbf{u}_{h}^{n}\|_{L^2(\Omega)},
	\end{align}	
    which implies that \eqref{4-35} holds. The proof is complete.
\end{proof}

\section{Numerical tests}\label{2025-9-9-3}
In this section, we show some numerical tests to verify theoretical results. We ignore the effects of gravity because the corresponding term does not affect the conclusion.

{\bf Test 1.} Let $\Omega= (0,1)\times(0,1)$, $T=1$,  $\Gamma_{1}= \{ (0,y);~0\leq y\leq1 \}$, $\Gamma_{2}= \{(1,y);~0\leq y\leq1 \}$, $\Gamma_{3}= \{(x,1);~0\leq x\leq1 \}$, $\Gamma_{4} = \{(x,0);~0\leq x\leq1\}$.\\
We take 
\begin{align*}
	\Phi^{'}(\mathrm{dev}\varepsilon(\mathbf{u}))=\frac{1}{2}(1+\mathrm{dev}\varepsilon(\mathbf{u}))^{-\frac{1}{2}}\quad\text{and}\quad
	\kappa(x)=\frac{1}{\lambda}+\frac{\mu}{2}
\end{align*}
to satisfy \eqref{11-25-1} and \eqref{11-25-2}, respectively.
According to \eqref{11-22-1} and \eqref{11-26-2}, we get
\begin{align*}
	&\sigma(\mathbf{u})=\mu((1+\mathrm{dev}\varepsilon(\mathbf{u}))^{-\frac{1}{2}})\varepsilon(\mathbf{u})+
	(\frac{1}{\lambda}+\frac{\mu}{2}-\frac{\mu}{2}(1+\mathrm{dev}\varepsilon(\mathbf{u}))^{-\frac{1}{2}})\operatorname{div}\mathbf{u}\mathbf{I},\\
	&\mathcal{N}(\varepsilon(\mathbf{u}))=\sigma(\mathbf{u})-\frac{1}{\lambda}\operatorname{div}\mathbf{u}\mathbf{I},
\end{align*}
where
\begin{align*}
	\mathrm{dev}\varepsilon(\mathbf{u})=\mathrm{tr}(\varepsilon^{2}(\mathbf{u}))-\frac{1}{2}\mathrm{tr}^{2}(\varepsilon(\mathbf{u})).
\end{align*}
The body force $\mathbf{f}$ and source term $\phi$ as follows
\begin{align*}
	\mathbf{f}=
	\begin{pmatrix}
		f_1\\
		f_2
	\end{pmatrix},\quad
	\phi = 
	(-\frac{c_0}{\pi}+2\alpha\pi t-2\pi t\frac{K}{\mu_{f}})\sin(\pi x + \pi y),
\end{align*}
where
\begin{align*}
	f_1&= \frac{\mu}{4}\pi^4t^6(1+\mathrm{dev}\varepsilon(\mathbf{u}))^{-\frac{3}{2}}\sin(2\pi x)\sin(\pi y-\pi x)(\cos^2(\pi y)-\sin^2(\pi y))\\
	&+\frac{\mu}{4}\pi^4t^6(1+\mathrm{dev}\varepsilon(\mathbf{u}))^{-\frac{3}{2}}\sin(2\pi y)\sin(\pi y+\pi x)(\cos^2(\pi x)-\sin^2(\pi x))\\
	&+\mu \pi ^2t^2(1+\mathrm{dev}\varepsilon(\mathbf{u}))^{-\frac{1}{2}}\cos(\pi x)\cos(\pi y)
	-(\frac{1}{\lambda}+\frac{\mu}{2})\pi^2t^2\cos(\pi x+\pi y)\\
	&-\pi t\alpha\cos(\pi x +\pi y),
\end{align*}
\begin{align*}
	f_2&= \frac{\mu}{4}\pi^4t^6(1+\mathrm{dev}\varepsilon(\mathbf{u}))^{-\frac{3}{2}}\sin(2\pi x)\sin(\pi x+\pi y)(\cos^2(\pi y)-\sin^2(\pi y))\\
	&+\frac{\mu}{4}\pi^4t^6(1+\mathrm{dev}\varepsilon(\mathbf{u}))^{-\frac{3}{2}}\sin(2\pi y)\sin(\pi x-\pi y)(\cos^2(\pi x)-\sin^2(\pi x))\\
	&-\mu \pi ^2t^2(1+\mathrm{dev}\varepsilon(\mathbf{u}))^{-\frac{1}{2}}\sin(\pi x)\sin(\pi y)
	-(\frac{1}{\lambda}+\frac{\mu}{2})\pi^2t^2\cos(\pi x+\pi y)\\
	&-\pi t\alpha\cos(\pi x +\pi y),
\end{align*}
and 
\begin{align*}
	\mathrm{dev}\varepsilon(\mathbf{u})
	=\pi^2t^4(\sin^2(\pi x)\cos^2(\pi y)+\cos^2(\pi x)\sin^2(\pi y)).
\end{align*}
The initial and boundary conditions are
\begin{align*}
	p(x,y,t) = -\frac{t}{\pi} \sin(\pi x+\pi  y) &\qquad\mbox{on }\partial\Omega_T,\\
	u_1(x,y,t)= t^{2}\sin(\pi x)\sin(\pi y) &\qquad\mbox{on }\Gamma_j\times (0,T),\, j=1,3,\\
	u_2(x,y,t) = t^{2}\sin(\pi x)\sin(\pi y) &\qquad\mbox{on }\Gamma_j\times (0,T),\, j=,2,4,\\
	\sigma(\mathbf{u})\bf{n}-\alpha p\bf{n} = \mathbf{f}_1 &\qquad \mbox{on } \p\Ome_T\backslash\Gamma_j,\\
	\mathbf{u}(x,y,0) = \mathbf{0},  \quad p(x,y,0) =0 &\qquad\mbox{in } \Omega.
\end{align*}
It is easy to check that the exact solution is
\begin{align*}
	\mathbf{u}(x,y,t)=
	\begin{pmatrix}
		u_1(x,y,t)\\
		u_2(x,y,t)
	\end{pmatrix}
	=
	\begin{pmatrix}
		t^{2}\sin(\pi x)\sin(\pi y)\\
		t^{2}\sin(\pi x)\sin(\pi y)
	\end{pmatrix},\qquad
	p(x,y,t) =-\frac{t}{\pi} \sin(\pi x + \pi y).
\end{align*}

The term $\mathbf{f}_1$ can be obtained from the exact solution. For simplicity, we employ the Picard iteration method to handle the nonlinear term  $\mathcal{N}(\varepsilon(\mathbf{u}))$.
During the iteration process, the nonlinear part $(1+\mathrm{dev}\varepsilon(\mathbf{u}))^{-\frac{1}{2}}$is "frozen" at its value from the previous iteration step (i.e., treated as a known term), thereby transforming the nonlinear problem into a linear one.
Alternatively, other methods such as Newton's iteration method could also be adopted.


The values of physical parameters are given in Table \ref{table_test1_1}. The other physical variables $\lambda$, $\mu$, $\kappa_1$, $\kappa_2$, $\kappa_3$  can be calculated by \eqref{3.1} and \eqref{3.5}.
\begin{table}[H]
	\begin{center}
		\caption{Values of physical parameters}\label{table_test1_1}
		\begin{tabular}{ccc}
			\hline
			Parameters 	& Description		& Values \\
			\hline
			$E$ 			& Young's modulus 							& 1e6\\
			$\nu$ 		& Poisson ratio 							& 0.499\\
			$\alpha$ 	& Biot-Willis constant 						& 1\\
			$c_0$  		& Constrained specific storage coefficient 	& 1e3\\
			$K$ 			& Permeability tensor 		& 1e-5\\
			$\mu_f$ 	& solvent viscosity 						& 1\\
			\hline
		\end{tabular}
	\end{center}
\end{table}

\begin{table}[H]
	\begin{center}
		\caption{Spatial errors and convergence orders of $\mathbf{u}$ and $p$ with $\Delta t=h^2$ }\label{table_test1_2}
		\begin{tabular}{l c c c c c c c c}
			\hline
			$h$& $\|\mathbf{u}-\mathbf{u}_h\|_{L^2}$  &  order& $\|\mathbf{u}-\mathbf{u}_h\|_{H^1}$ & order& $\|p-p_h\|_{L^2}$ &order& $\|p-p_h\|_{H^1}$&order\\ \hline
			$1/4$ & 6.8683e-02& & 0.7023 & & 2.3439e-2& & 0.4633 &\\
			$1/8$ &1.0679e-02 &2.6851 & 0.2594&1.4365 &4.7212e-3 &2.3116 & 0.2295&1.0131\\
			$1/16$ &1.4777e-03&2.8334&  7.7034e-2&1.7520 &1.1103e-3&2.0969&  0.1137&1.0128\\
			$1/32$ &2.0763e-04&2.8513 & 2.0912e-2&1.8811&2.7114e-4&2.0251 &  5.6734e-2& 1.0037\\ \hline
		\end{tabular}
	\end{center}
\end{table}
\begin{table}[H]
	\begin{center}
		\caption{Time errors and convergence orders of $\mathbf{u}$ and $p$ with $h=\frac{1}{4}$ }\label{test1_table_3}
		\begin{tabular}{l c c c c }
			\hline
			$\Delta t$&  $\|\mathbf{u}-\mathbf{u}_h\|_{H^1}$ & $\rm{order_T}$ & $\|p-p_h\|_{H^1}$&$\rm{order_T}$\\ \hline
			$1/10$ & 0.25517579  & & 0.22948171&\\
			$1/20$ &0.26099611  &	1.8582	&	0.22958984	&1.9838	\\
			$1/40$ & 0.2641283  &	1.9493	&	0.22964434	&	1.9920	\\
			$1/80$ &0.2657351 &	1.9811	&	0.22967171	&1.9960	\\ 
			$1/160$ &0.26654616  &1.9922	&	0.22968541	&	1.9980	\\
			\hline
		\end{tabular}
	\end{center}
\end{table}

\begin{figure}[H]
	\centering
	\includegraphics[width=0.45\textwidth]{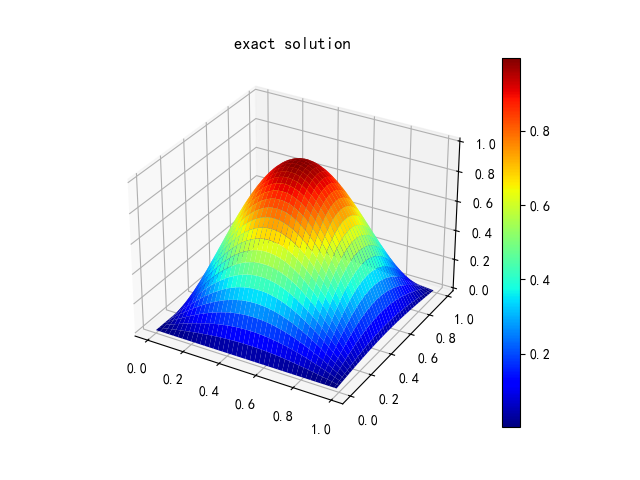}
	\includegraphics[width=0.45\textwidth]{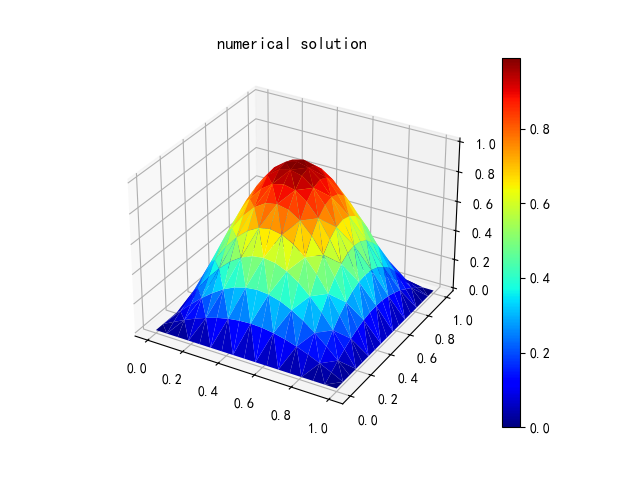}
	\caption{Surface plot of the computed displacement $u_1$ at the terminal time $T$
		.}\label{test1_fig_u1}
\end{figure}
\begin{figure}[H]
	\centering
	\includegraphics[width=0.45\textwidth]{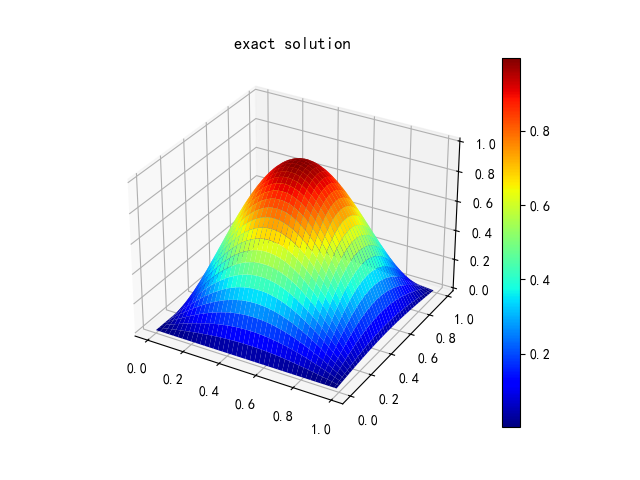}
	\includegraphics[width=0.45\textwidth]{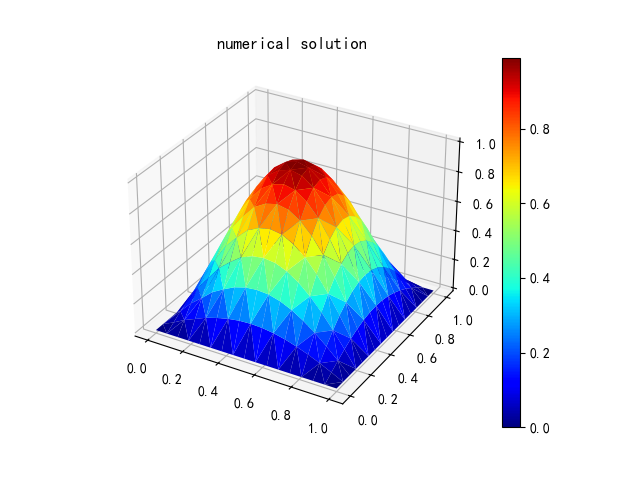}
	\caption{Surface plot of the computed displacement $u_2$ at the terminal time $T$.}\label{test1_fig_u2}
\end{figure}
\begin{figure}[H]
	\centering
	\includegraphics[width=0.45\textwidth]{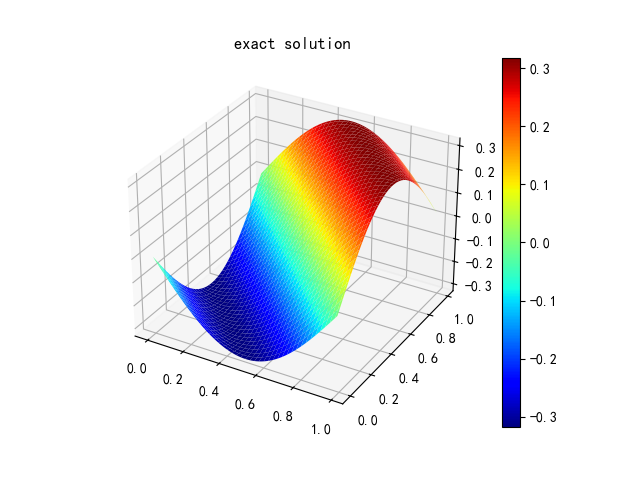}
	\includegraphics[width=0.45\textwidth]{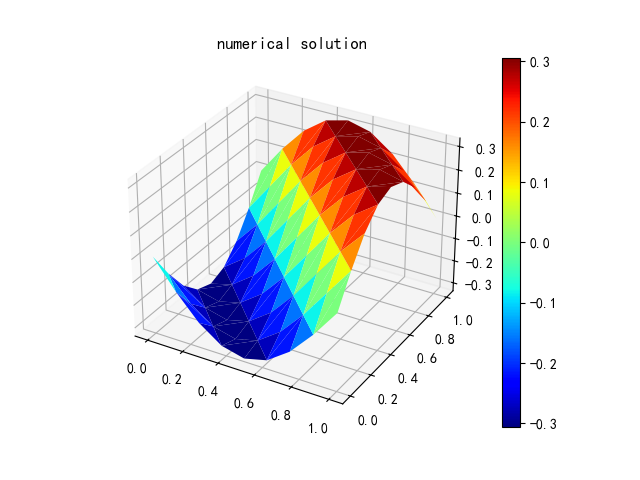}
	\caption{Surface plots of the computed pressure $p$ and exact pressure $p$ at the terminal time $T$.}
	\label{test1_fig_p}
\end{figure}

In this paper, the order of spatial error convergence and time error convergence are defined as 
\begin{align*}
	{\rm{order}}=\frac{\log(R(h)/R(\frac12h))}{\log2},
	\qquad
	{\rm{order_T}}=\abs{\frac{R_h(\Delta t)-R_h(\frac12\Delta t)}{R_h(\frac12\Delta t)-R_h(\frac14\Delta t)}},
\end{align*}
where $R(h)$ is the spatial error with mesh $h$ and $\Delta t=h^2$, and $R_h(\Delta t)$ is the spatial error with respect to time $\Delta t$ for a fixed space mesh $h$.
According to reference \cite{M.Mo2010}, we know $\rm{order_T} \approx 2$ when the corresponding order of convergence in time is $O(\Delta t)$.

Table \ref{table_test1_2} display the computed $ L^2( \Omega)$-norm and $ H^1( \Omega) $-norm errors of $\mathbf{u}$, $p$ and the convergence orders with respect to $h$ at the terminal time $T$.
Evidently, the convergence orders are consistent with Theorem \ref{them_4.2} and Theorem \ref{them_4.3}.

Figure \ref{test1_fig_u1}-Figure \ref{test1_fig_p} show respectively the surface plot of the  computed displacement $u_1$, $u_2$ and pressure $p$ at the terminal time $T$ with mesh parameters $h= \frac{1}{8}$ and $\Delta t= h^2$. They coincide with the exact solution.

{\bf Test 2.} Let $\Omega= (0,1)\times(0,1)$, $T=1$,  $\Gamma_{1}= \{ (0,y);~0\leq y\leq1 \}$, $\Gamma_{2}= \{(1,y);~0\leq y\leq1 \}$, $\Gamma_{3}= \{(x,1);~0\leq x\leq1 \}$, $\Gamma_{4} = \{(x,0);~0\leq x\leq1\}$.\\
We take 
\begin{align*}
	\Phi^{'}(\mathrm{dev}\varepsilon(\mathbf{u}))=1-\frac{1}{2}\mathrm{e}^{-\mathrm{dev}\varepsilon(\mathbf{u})}\quad\text{and}\quad
	\kappa(x)=\frac{1}{\lambda}+\frac{\mu}{2}\mathrm{e}^{-\mathrm{dev}\varepsilon(\mathbf{u})}
\end{align*}
to satisfy \eqref{11-25-1} and \eqref{11-25-2}, respectively. According to \eqref{11-22-1} and \eqref{11-26-2}, we get
\begin{align*}
	&\sigma(\mathbf{u})=\mu(2-\mathrm{e}^{-\mathrm{dev}\varepsilon(\mathbf{u})})\varepsilon(\mathbf{u})+(\frac{1}{\lambda}-\mu+\mu\mathrm{e}^{-\mathrm{dev}\varepsilon(\mathbf{u})})\operatorname{div}\mathbf{u}\mathbf{I},\\
	&\mathcal{N}(\varepsilon(\mathbf{u}))=\sigma(\mathbf{u})-\frac{1}{\lambda}\operatorname{div}\mathbf{u}\mathbf{I},
\end{align*}
where
\begin{align*}
	\mathrm{dev}\varepsilon(\mathbf{u})=\mathrm{tr}(\varepsilon^{2}(\mathbf{u}))-\frac{1}{2}\mathrm{tr}^{2}(\varepsilon(\mathbf{u})).
\end{align*}
The body force $\mathbf{f}$ and source term $\phi$ as follows
\begin{align*}
	\mathbf{f}=&
	\begin{pmatrix}
		-(8\mu t^3(y^2-xy)e^{-2t^2(x-y)^2}+\frac{2t}{\lambda}+2\mu t)+2t\alpha x
		\\
		-(8\mu t^3(x^2-xy)e^{-2t^2(x-y)^2}+\frac{2t}{\lambda}+2\mu t)+2t\alpha y
	\end{pmatrix},\\
	\phi = &
	c_0(x^2+y^2)+2\alpha (x+y)-\frac{4tK}{\mu_{f}}.
\end{align*}
The initial and boundary conditions are
\begin{align*}
	p(x,y,t) = t(x^2+y^2)  &\qquad\mbox{on }\partial\Omega_T,\\
	u_1(x,y,t)= tx^2 &\qquad\mbox{on }\Gamma_j\times (0,T),\, j=1,3,\\
	u_2(x,y,t) = ty^2 &\qquad\mbox{on }\Gamma_j\times (0,T),\, j=,2,4,\\
	\sigma(\mathbf{u})\bf{n}-\alpha p\bf{n} = \mathbf{f}_1 &\qquad \mbox{on } \p\Ome_T\backslash\Gamma_j,\\
	\mathbf{u}(x,y,0) = \mathbf{0},  \quad p(x,y,0) =0 &\qquad\mbox{in } \Omega.
\end{align*}
It is easy to check that the exact solution is
\begin{align*}
	\mathbf{u}(x,y,t)=
	\begin{pmatrix}
		u_1(x,y,t)\\
		u_2(x,y,t)
	\end{pmatrix}
	=t
	\begin{pmatrix}
		x^2\\
		y^2
	\end{pmatrix},\qquad
	p(x,y,t) =t(x^2+y^2).
\end{align*}

Similarly, the term $\mathbf{f}_1$ can be obtained from the exact solution and we also use the Picard iteration method to deal with the nonlinear terms $\mathcal{N}(\mathbf{u}^{n}_h)$.
The values of physical parameters are given in Table \ref{table_1}. The other physical variables $\lambda$, $\mu$, $\kappa_1$, $\kappa_2$, $\kappa_3$  can be calculated by \eqref{3.1} and \eqref{3.5}.
\begin{table}[H]
	\begin{center}
		\caption{Values of physical parameters}\label{table_1}
		\begin{tabular}{ccc}
			\hline
			Parameters 	& Description		& Values \\
			\hline
			$E$ 			& Young's modulus 							& 1e6\\
			$\nu$ 		& Poisson ratio 							& 0.499\\
			$\alpha$ 	& Biot-Willis constant 						& 1\\
			$c_0$  		& Constrained specific storage coefficient 	& 2\\
			$K$ 			& Permeability tensor 		& 1e-5\\
			$\mu_f$ 	& solvent viscosity 						& 1\\
			\hline
		\end{tabular}
	\end{center}
\end{table}

\begin{table}[H]
	\begin{center}
		\caption{Spatial errors and convergence orders of $\mathbf{u}$ and $p$ with $\Delta t=h^2$ }\label{tab31}
		\begin{tabular}{l c c c c c c c c}
			\hline
			$h$& $\|\mathbf{u}-\mathbf{u}_h\|_{L^2}$  &  order& $\|\mathbf{u}-\mathbf{u}_h\|_{H^1}$ & order& $\|p-p_h\|_{L^2}$ &order& $\|p-p_h\|_{H^1}$&order\\ \hline
			$1/4$ & 5.9017e-03& & 6.2865e-2 & & 2.1407e-2& & 0.4082 &\\
			$1/8$ &9.2251e-04 &2.6774 & 1.7457e-2&1.8484 &4.9944e-3 &2.0997 & 0.2077&0.9743\\
			$1/16$ &1.2346e-04&2.9014&  4.5983e-3&1.9246 &1.1309e-3&2.1427&  0.1028&1.0147\\
			$1/32$ &1.5716e-05&2.9737 & 1.1686e-3&1.9762&2.7086e-4&2.0619 &  5.1287e-2& 1.0035\\ \hline
		\end{tabular}
	\end{center}
\end{table}

\begin{table}[H]
	\begin{center}
		\caption{Time errors and convergence orders of $\mathbf{u}$ and $p$ with $h=\frac{1}{4}$ }\label{tab32}
		\begin{tabular}{l c c c c c c c c}
			\hline
			$\Delta t$& $\|\mathbf{u}-\mathbf{u}_h\|_{L^2}$  &  $\rm{order_T}$ & $\|\mathbf{u}-\mathbf{u}_h\|_{H^1}$ & $\rm{order_T}$ & $\|p-p_h\|_{L^2}$ &$\rm{order_T}$& $\|p-p_h\|_{H^1}$&$\rm{order_T}$\\ \hline
			$1/40$ & 9.2268e-4  & & 1.747152e-2	&			&	5.00165e-3	&			&	0.20777426&\\
			$1/80$ &9.2286e-4  &	1.6253	&	1.747657e-2	&	1.9440	&	5.00427e-3	&	1.9497	&0.20778024&1.8297\\
			$1/160$ & 9.2298e-4  &	1.8362	&	1.747917e-2	&	1.9734	&	5.00562e-3	&	1.9765	&0.20778351&1.9204\\
			$1/320$ &9.2304e-4  &	1.9232	&	1.748049e-2	&	1.9871	&5.00630e-3	&	1.9886	&0.20778522&1.9614\\ 
			$1/640$ &9.2307e-4  &	1.9627	&	1.748115e-2	&	1.9936	&	5.00664e-3	&	1.9945	& 0.20778608&1.9809\\
			\hline
		\end{tabular}
	\end{center}
\end{table}

\begin{figure}[H]
	\centering
	\includegraphics[width=0.45\textwidth]{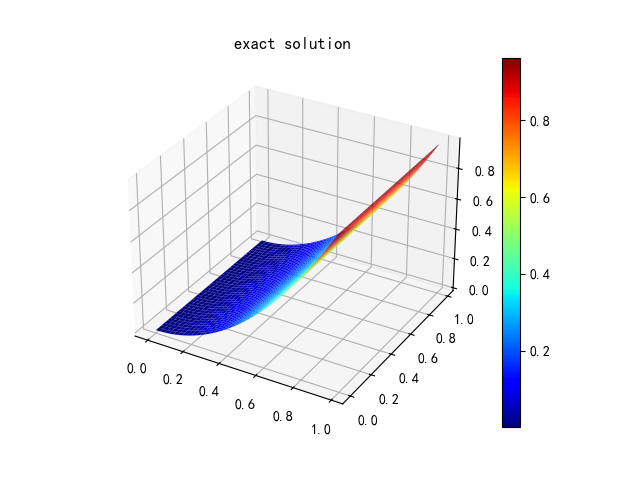}
	\includegraphics[width=0.45\textwidth]{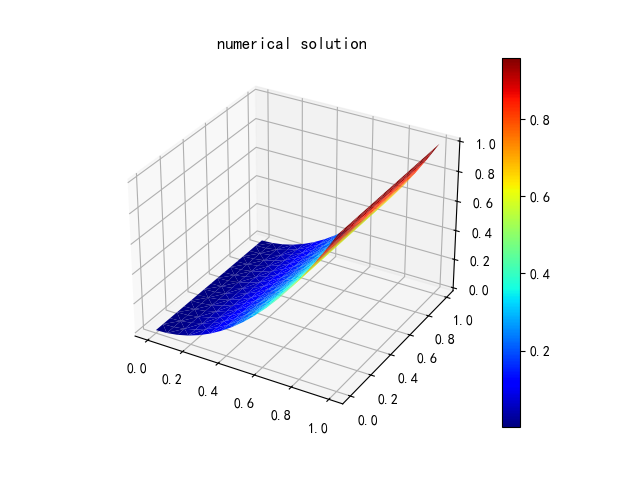}
	\caption{Surface plot of the computed displacement $u_1$ at the terminal time $T$
		.}\label{figure_p31}
\end{figure}
\begin{figure}[H]
	\centering
	\includegraphics[width=0.45\textwidth]{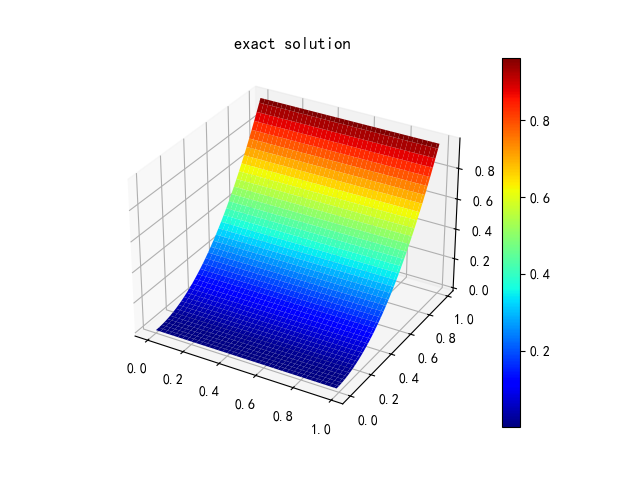}
	\includegraphics[width=0.45\textwidth]{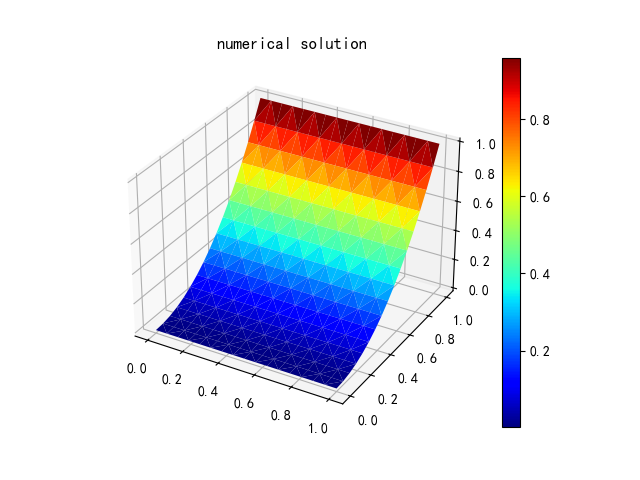}
	\caption{Surface plot of the computed displacement $u_2$ at the terminal time $T$.}\label{figure_p33}
\end{figure}
\begin{figure}[H]
	\centering
	\includegraphics[width=0.45\textwidth]{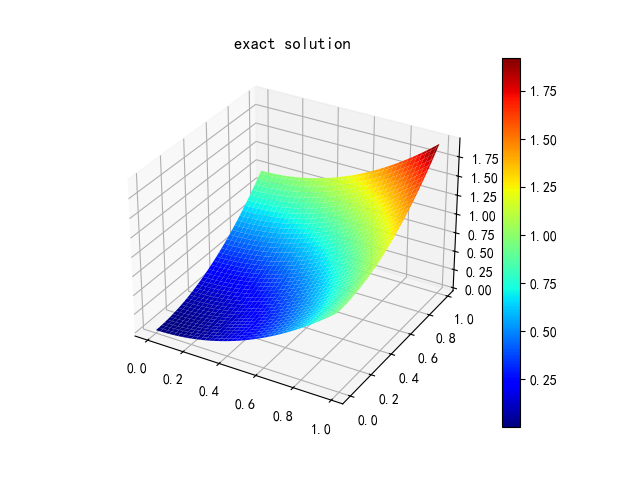}
	\includegraphics[width=0.45\textwidth]{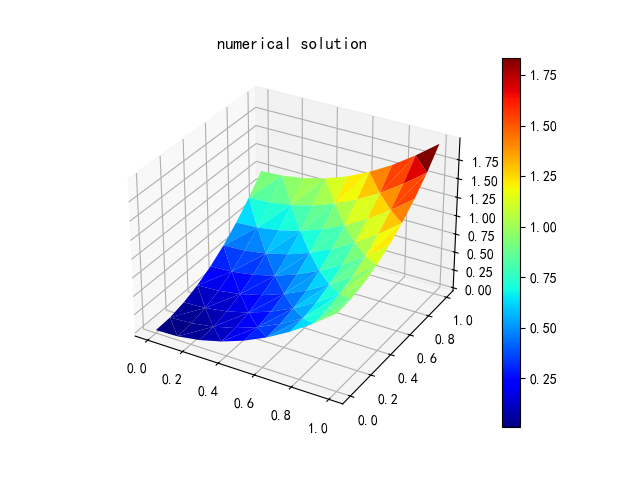}
	\caption{Surface plots of the computed pressure $p$ and exact pressure $p$ at the terminal time $T$.}
	\label{figure_p35}
\end{figure}

Table \ref{tab31} display the computed $L^2( \Omega) $-norm and $H^1( \Omega)$-norm errors of $\mathbf{u}$, $p$ and the convergence orders with respect to $h$ at the terminal time $T$.
Evidently, the convergence orders are consistent with Theorem \ref{them_4.2} and Theorem \ref{them_4.3}.

Figure \ref{figure_p31}-Figure \ref{figure_p35} show respectively the surface plot of the  computed displacement $u_1$, $u_2$ and pressure $p$ at the terminal time $T$ with mesh parameters $h= \frac{1}{8}$ and $\Delta t= h^2$. They coincide with the exact solution.

\section{Conclusion}\label{sec-4}
In this paper,  we propose a multiphysics finite element method for a nonlinear poroelastic model with Hencky-Mises stress tensor. First, we reformulate the original fluid-solid coupling model into a fluid-fluid problem by introducing some new variables, where ($\mathbf{u}$, $\xi$) satisfies a generalized nonlinear Stokes equations and $\eta$ satisfies a diffusion equation. The reformulation model reveals the underlying multiphysics processes and demonstrates that the nonlinearity of $\mathcal{N}(\varepsilon(\mathbf{u}))$ intensifies with increasing $\lambda$. And we establish an energy estimate and prove the existence and uniqueness of the weak solution for the reformulated problem. 
Then, we design a fully discrete multiphysics finite element method to solve the reformulated model with Taylor-Hood element space for the spatial variables $(\mathbf{u}, \xi)$,  linear piecewise polynomials space for $\eta$, and the backward Euler method for time discretization. 
The proposed method overcomes the locking phenomenon and is stable for arbitrary Lagrangian element pairs.
According to the continuity, monotonicity, and coercivity of the nonlinear term $\mathcal{N}(\varepsilon(\mathbf{u}_h^{n}))$, we give the stability analysis and  derive the optimal $L^2$-norm and $H^1$-norm error estimate for displacement vector $\mathbf{u}$ and pressure $p$. In particular, the $L^2$-norm error estimate for the displacement $\mathbf{u}$ is established based on an auxiliary problem and a Poincar$\acute{e}$ inequality, thus filling a gap in the existing literature. Finally, we present numerical tests to verify the theoretical results, where the Picard iteration method is used for the nonlinear terms.


\end{document}